\theoremstyle{definition}
\def\fnum{equation} 
\newtheorem{Thm}[\fnum]{Theorem}
\newtheorem{Cor}[\fnum]{Corollary}
\newtheorem{Lem}[\fnum]{Lemma}
\newtheorem{Pro}[\fnum]{Proposition}
\numberwithin{equation}{section}
\newcommand{\Entropy}{{\text{S}}}
\newcommand{\osc}{{\text{osc}}}
\newcommand{\dist}{{\text {dist}}}
\newcommand{\tv}{{\tilde{v}}}
\def\ZZ{{\bold Z}}
\def\RR{{\bold R}}
\newcommand{\e}{{\text {e}}}
\newcommand{\Var}{{\text {Var}}}
\newcommand{\cE}{{\mathcal{E}}}
\newcommand{\cA}{{\mathcal{A}}}
\newcommand{\cB}{{\mathcal{B}}}
\newcommand{\cH}{{\mathcal{H}}}
\newcommand{\eqr}[1]{(\ref{#1})}
\title[Committee ranking]{Committee ranking}
\author[Colding]{Tobias Holck Colding}%
\address{MIT, Dept. of Math.\\
77 Massachusetts Avenue, Cambridge, MA 02139-4307.}
\email{colding@math.mit.edu and minicozz@math.mit.edu}
\author[Minicozzi]{William P. Minicozzi II}%
\thanks{The  
authors
were partially supported by NSF Grants DMS 1404540 and DMS 1206827.}
\begin{document}

\maketitle

\begin{abstract}
This paper deals with interactions between committee members as they rank a large list of applicants for a given position and eventually reach  consensus.  We will see that for a natural deterministic model the ranking can be described by solutions of a discrete quasilinear heat equation with time dependent coefficients on a graph.

We show first that over time consensus emerges exponentially fast.  Second,  if  there are clusters of members whose views are closer than those of the rest of the committee, then over time the clusters' views become closer at a faster exponential rate than the views of the entire committee.  

 We will also show that the variance of the rankings decays a definite amount, independent of the initial variance, when the influence of the members  does not decay too quickly as opinions differ.  When the influence between members is exactly a negative power, then the variance is convex and satisfies a  three circles theorem.
 \end{abstract}

\section{Introduction}

As a committee ranks candidates, members adjust their rankings taking into account  opinions of   others.
   If two members rank applicants close, then they are each more likely to reorder their rankings to move them closer together.  If their views  
    are far apart, then they are less likely to make significant changes.  They continue this process of ranking, deliberating, reranking and deliberating again.

 We will consider a deterministic model  that leads to a discrete quasilinear heat equation with time dependent coefficients on a graph; see \eqr{e:oneptwo}.  We will analyze solutions and begin by proving two general features.  First,  solutions converge exponentially to an equilibrium; see Corollary \ref{c:main1}.  Second, clusters of closely aligned members converge at a faster rate than the committee as a whole; see Theorem \ref{l:cluster1}.
 
A  consequence of our results is that if most of a committee has relatively close views but there are a few outliers, then the bulk of the committee will reach consensus much faster than the entire committee.  The consensus reached among the bulk will be influenced by the outliers, even though it may still be quite far from their views.  The further the outliers are from the bulk, the less influence they will have before the bulk reaches a consensus.

We also show that if the influence of members on each other does not decay too quickly as opinions differ, then the variance of opinions decays a definite amount  independent of the initial variance.  As a consequence, we get an upper bound for the time to consensus; see Corollary \ref{c:vardecay}.  This upper bound depends only on the initial variance.  This bound for the decay is stronger than the  exponential decay above.  This is because the exponential rate is very slow when there are outliers.

When the influence between members is exactly equal to a negative power in the difference of opinion, then the variance is convex and 
satisfies a nonlinear three circles theorem; see Theorem \ref{t:threecircles}.   This holds independently of the number of candidates and committee members.

Our results work equally well whether  the influence of one member on another depends on how closely aligned  their rankings are of a particular candidate, or of the entire set of the candidates.
For the exponential decay, our results work for ranking maps that take values in a Banach space, finite or infinite dimensional.  For the decay of the variance,   the ranking map takes values in a Hilbert space.

We also relate our parabolic equation to the hyperbolic equation known from the classical $n$-body problem in celestial mechanics for predicting the individual motions of a group of celestial objects interacting with each other gravitationally (see Section \ref{s:nbody}).  
In the $n$-body problem, the force between objects decays quadratically in the distance between them.

\vskip1mm
This article grew out of a question of  Sreeram Kannan, Sanjeev Khanna and Madhu Sudan; we are grateful to them for discussions, \cite{KKS}. 
 
\section{The equation}

Let $\rho:[0,\infty)\to [0,1]$  be a   nonincreasing function. 
Suppose that the committee has $d+1$ members and they all meet together and discuss their rankings of $n$ candidates.  Let $\Gamma$ be the complete graph with $d+1$ vertices so all vertices $v$ are adjacent to each other.  We will write $\tilde v\approx v$ for $\tilde v$ adjacent to $v$.  The ranking map $f:\Gamma\times \ZZ_+\to \RR^n$ is given recursively by
\begin{align}	\label{e:firstrho}
f(v,t+1)=\frac{1}{d}\sum_{\tilde v\approx v}\left[f(v ,t)\, (1-\rho (|f(v,t)-f(\tilde v,t)|))+f(\tilde v,t)\, \rho (|f(v,t)-f(\tilde v,t)|)  \right]\, .
\end{align}
Each member's new  ranking is a weighted average of their old ranking and the rankings of the other members.
The  $\rho$ is the relative weight given to the other members' ranking.  Since $\rho(s)$ is nonincreasing in $s$,
they give less weight to rankings that are very different from their own.   We will take $\rho \in [0,1]$, but one could restrict to $\rho \leq 1/2$ since it would be natural that they put more weight on their own opinion.

Obviously equation \eqr{e:firstrho}
 may be rewritten as   the nonlinear equation for $f$
\begin{align}	\label{e:oneptwo}
\partial_tf&= \frac{1}{d} \sum_{ \tilde v\approx v}   [f(\tilde v,t)-f(v,t)]\,\rho (|f(\tilde v,t)-f(v,t)|)  \, ,
\end{align}
where $\partial_t\,f=f(v,t+1)-f(v,t)$.

This allows for very general models.  The following simple examples are illustrations.
\subsubsection{$\rho$ constant} 
In the extreme case where $\rho \equiv  0$,  the committee members views are rigid and their ranking remains unchanged. 
More generally, when $\rho=p$ for some constant $0<p<1$, we get the ordinary discrete heat equation (cf. \cite{C}, \cite{G}, \cite{S}, \cite{Su}), up to a constant,
\begin{align}
\partial_tf(v,t) =\frac{p}{d}\,\sum_{\tv \ne v}(f(\tv ,t)-f (v,t)) = p \, \Delta f (v,t)  \, . 
\end{align}
 Therefore the maps $f (v,t)\to g$ exponentially fast as $t\to\infty$.   In other words,
  the ranking of the committee becomes synchronized exponentially fast.    This is also a special case of the next example.  
\subsubsection{$\rho$ bounded from below} 
Even in the more general case where $0< p  \leq \rho $ for some positive constant $p$ we claim that as $t\to \infty$ the maps $f(v,t)\to g$ exponentially fast so that again the ranking of the committee becomes synchronized exponentially fast.      This will be a consequence of Theorem \ref{t:main1}.  
\subsubsection{Clusters with no interaction}
Suppose that the committee can be divided into clusters $A_1,\cdots, A_{\alpha}$ where 
$
|f(v, 0)-f(\tilde v,0)|< R
$
if $v$ and $\tilde v$ belong to the same cluster and
$
|f(v,0)-f(\tilde v,0)|\geq  R
$
otherwise.  If $\rho (R)=0$, then the views of one cluster are not influenced by another.

\section{Time dependent nonlinear elliptic operators on graphs}
Let $\mu$ be a positive function on the oriented edges of the graph $\Gamma$ and define the discrete linear elliptic operator\footnote{This operator is close to what is typically called a weighted Laplacian, but differs since we do not require that $\frac{1}{d}\sum_{\tilde v\approx v} \mu^t_{v,\tilde v}=1$ or even just constant in $v$.  In our applications $\mu$ will be allowed to be time dependent.} on functions on $\Gamma$ by
\begin{align}	\label{e:3p1}
L_{\mu}\,u (v) =\frac{1}{d}\sum_{\tilde v\approx v} [u(\tilde v)-u(v)]\,\mu_{v,\tilde v}\, .
\end{align} 
We will assume that $1\geq \mu_{v,\tilde v} \geq 0$.  This operator is elliptic exactly when $\mu>0$.  Recall that the graph Laplacian is where $\mu_{v,\tilde v} \equiv 1$.
In our first application  
\begin{align}
\mu_{v,\tilde v}= \rho (|u(v)-u(\tilde v)|)\, .  
\end{align}
In this case $\mu$ does not depend on the orientation of the edge; however,   the operator $L_{\mu}$ is nonlinear.  We will let $\cE$ denote the set of unoriented edges
\begin{align}	\label{e:edges}
	\cE = \{ (v_1 , v_2) \, | \, v_1 \ne v_2 \in \Gamma \} \mod (v_1 , v_2) \approx (v_2 , v_1) \, .
\end{align}

Next we will also allow $u$ to depend on time so that $\mu=\mu^t_{v,\tilde v}$ also depends on time and consider the discrete quasilinear time dependent heat equation
\begin{align}
\partial_tu=L_{\mu}\,u\, .
\end{align}

\begin{Lem}    \label{l:preserve}
Suppose $u:\Gamma\to \RR$.  If $\mu$ is independent of the orientation of the edges\footnote{This means that $\mu_{v,\tilde v}=\mu_{\tilde v,v}$.}, then 
\begin{align}
\sum_{v}\left(L_{\mu}\,u\right)\,(v)=0\, .
\end{align}
\end{Lem}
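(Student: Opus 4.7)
The plan is to expand the double sum and exploit the symmetry of $\mu$ by a reindexing argument, essentially a discrete integration-by-parts.

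First I would write out the total sum as an unrestricted double sum over ordered pairs of adjacent vertices:
\begin{align}
\sum_{v} (L_\mu u)(v) = \frac{1}{d}\sum_{v}\sum_{\tilde v\approx v} [u(\tilde v)-u(v)]\,\mu_{v,\tilde v}\, .
\end{align}
The key observation is that the index set $\{(v,\tilde v) : \tilde v \approx v\}$ is symmetric under the swap $(v,\tilde v) \mapsto (\tilde v, v)$. So relabeling the summation variables in the copy of the sum obtained by this swap, and then using $\mu_{v,\tilde v} = \mu_{\tilde v, v}$, would give
\begin{align}
\sum_{v}\sum_{\tilde v\approx v} [u(\tilde v)-u(v)]\,\mu_{v,\tilde v} = \sum_{v}\sum_{\tilde v\approx v} [u(v)-u(\tilde v)]\,\mu_{\tilde v, v} = -\sum_{v}\sum_{\tilde v\approx v} [u(\tilde v)-u(v)]\,\mu_{v,\tilde v}\, .
\end{align}
Hence the sum equals its own negative and must vanish.

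Equivalently, one could organize the computation by grouping contributions along each unoriented edge in $\cE$: for each $(v_1,v_2)\in\cE$ the pair of summands coming from the vertices $v_1$ and $v_2$ is
\begin{align}
[u(v_2)-u(v_1)]\,\mu_{v_1,v_2}+[u(v_1)-u(v_2)]\,\mu_{v_2,v_1}=0
\end{align}
by symmetry of $\mu$, so the whole sum vanishes edge by edge.

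There is really no obstacle here; the statement is the discrete analogue of the fact that $\int_M \Delta u = 0$ on a closed manifold, and the argument is the standard symmetrization of the Dirichlet form. The only point that needs to be flagged explicitly is the hypothesis that $\mu$ is orientation-independent, since without that hypothesis the relabeling step would fail.
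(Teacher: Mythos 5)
Your proof is correct and matches the paper's: the "group by unoriented edges" version you give second is exactly the paper's argument, and your first "swap and negate" version is just a restatement of the same cancellation. Nothing to add.
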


\begin{proof}
A straightforward calculation gives
\begin{align}
\sum_{v}\left(L_{\mu}\,u\right)(v) &=\sum_{v}\frac{1}{d}\sum_{\tilde v\approx v} [u(\tilde v)-u(v)]\,\mu_{v,\tilde v}\notag\\&=\frac{1}{d}\sum_{(v_1 , v_2) \in \cE} \left([u(v_2)-u(v_1)]\,\mu_{v_1,v_2}+[u(v_1)-u(v_2)]\,\mu_{v_2,v_1}\right)=0\, ,
\end{align}
where the last equality follows since $\mu$ does not depend on the orientation of an edge.
\end{proof}

As a corollary, when $\mu$ is independent of the orientation of the edges, the overall view of the whole committee remains unchanged over time even if the view of individual members may change as they become more aligned with the views of other members; this is:

\begin{Cor}	\label{c:intupre}
Suppose that $\partial_t\,u=L_{\mu}\,u$ and $\mu$ is independent of the orientation of the edges, then  
$\sum_{v}u(v,t)
$
is constant in $t$.  
\end{Cor}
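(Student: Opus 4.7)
The plan is to deduce this corollary directly from Lemma \ref{l:preserve}. Since $\partial_t$ is the discrete time derivative $\partial_t u(v,t) = u(v,t+1) - u(v,t)$, summing the evolution equation over all vertices gives
\begin{align}
\sum_{v} u(v,t+1) - \sum_{v} u(v,t) = \sum_v \partial_t u(v,t) = \sum_v (L_\mu u)(v,t) \, .
\end{align}
The key step is then to apply Lemma \ref{l:preserve} at each fixed time $t$: since $\mu^t$ is independent of the orientation of the edges by hypothesis, the lemma gives $\sum_v (L_\mu u)(v,t) = 0$. Hence the finite difference $\sum_v u(v,t+1) - \sum_v u(v,t)$ vanishes for every $t$, which is exactly the statement that $\sum_v u(v,t)$ is constant in $t$.

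There is no real obstacle here; the content is entirely in Lemma \ref{l:preserve}, and the corollary is just the observation that the time derivative commutes with the finite sum over vertices (trivially, as $\Gamma$ has only $d+1$ vertices). The only thing to be careful about is that in our intended application $\mu = \mu^t$ depends on $t$ through $u(\cdot, t)$, but what matters for the lemma is only the symmetry $\mu^t_{v,\tilde v} = \mu^t_{\tilde v, v}$ at each fixed $t$, which is preserved. I would write the proof in two short lines: first reduce to Lemma \ref{l:preserve} via the identity above, then invoke the lemma to conclude.
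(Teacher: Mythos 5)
Your proof is correct and follows exactly the route the paper intends: sum the evolution equation over vertices, then invoke Lemma \ref{l:preserve} at each fixed time to see that the increment vanishes. The paper states this without further argument precisely because the deduction is immediate, and your remark that only the symmetry $\mu^t_{v,\tilde v}=\mu^t_{\tilde v,v}$ at each fixed $t$ is needed is the right thing to note.
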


We will let $\cA_u$ denote the average of a function $u$ on $\Gamma$
\begin{align}
	\cA_u = \frac{1}{d+1} \, \sum_{v \in \Gamma} u(v) \, .
\end{align}
By Corollary \ref{c:intupre}, $\cA_u$ is constant in time when 
$\partial_t\,u=L_{\mu}\,u$ and $\mu$ is independent of the orientation of the edges.

Using the parabolic maximum principle,
 we have that the most extreme views   moderate over time:

\begin{Pro}  \label{p:maxprin}
If $\partial_t\,u=L_{\mu}\,u$ on a graph $\Gamma$, then (in time)
\begin{align}
M_t=\max_{v}u (v,t)\downarrow {\text{ and }}
m_t=\min_{v} u (v,t)\uparrow\, .
\end{align}
\end{Pro}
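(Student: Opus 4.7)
The plan is to show that for each vertex $v$ the updated value $u(v, t+1)$ is a convex combination of the old values $u(v,t)$ and $u(\tv, t)$ for $\tv \approx v$; the maximum principle then follows immediately since a convex combination of numbers is sandwiched between their minimum and maximum.

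First I would rewrite the evolution equation. Moving $u(v,t)$ to the right-hand side of $u(v, t+1) = u(v,t) + L_{\mu} u(v,t)$ and collecting terms gives
\begin{align*}
u(v, t+1) = \Bigl( 1 - \frac{1}{d}\sum_{\tv \approx v} \mu^t_{v, \tv} \Bigr) u(v, t) + \sum_{\tv \approx v} \frac{\mu^t_{v, \tv}}{d} \, u(\tv, t).
\end{align*}
The coefficients on the right clearly sum to one, so this is an affine combination. To upgrade it to a convex combination I need all of them to be non-negative. The coefficients $\mu^t_{v, \tv}/d$ are non-negative because $\mu \geq 0$ by hypothesis. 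For the coefficient of $u(v,t)$, I invoke the standing assumption $\mu^t_{v, \tv} \leq 1$ from Section 2 together with the fact that $v$ has at most $d$ neighbors, so $\frac{1}{d}\sum_{\tv \approx v} \mu^t_{v, \tv} \leq 1$.

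With the convex-combination formula in hand, the bounds $u(v,t), u(\tv, t) \leq M_t$ immediately yield $u(v, t+1) \leq M_t$ for every $v$, and hence $M_{t+1} \leq M_t$. The symmetric argument using the lower bound gives $m_{t+1} \geq m_t$. Iterating these one-step inequalities over $t$ produces the claimed monotonicity of $M_t$ and $m_t$.

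The only genuine obstacle is the non-negativity of the self-weight $1 - \frac{1}{d}\sum_{\tv \approx v} \mu^t_{v, \tv}$; this is precisely where the normalization $\mu \leq 1$ built into the model is used. In continuous time such a constraint would be unnecessary, but a discrete forward Euler step of size $1$ must cap the total outgoing weight to avoid overshoot, and the assumption $\mu \leq 1$ does exactly that.
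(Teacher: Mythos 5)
Your proof is correct and follows the paper's argument exactly: rewrite $u(v,t+1)$ as a convex combination of $u(v,t)$ and its neighbors' values using $0 \le \mu \le 1$, then bound each term by the current maximum (resp.\ minimum). The paper's proof is the same computation, just written out inline.
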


\begin{proof} 
A simple computation shows
\begin{align}	\label{e:prma}
u(v,t+1)&=u(v,t)+\left(L_{\mu}\,u\right)(v,t)
= \left(1-\frac{1}{d}\sum_{\tilde v\approx v}\mu_{v,\tilde v}\right)\,u(v,t)+\frac{1}{d}\sum_{\tilde v\approx v}\mu_{v,\tilde v}\,u(\tilde v,t)\\
&\leq \left(1-\frac{1}{d}\sum_{\tilde v\approx v}\mu_{v,\tilde v}\right)\,\max_{w}u(w,t)
+\frac{1}{d}\sum_{\tilde v\approx v}\mu_{v,\tilde v}\,\max_{w}u(w,t)\notag\\
&= \max_{w}u(w,t)\, .\notag
\end{align}
This proves that $M_{t+1} \leq M_t$.  The monotonicity of the minimum follows similarly.
\end{proof}

Recall that a vector field on the $d$-regular graph $\Gamma$ is a map from $\Gamma$ to $\RR^d$.   The gradient of a function $u$ is the vector field $\nabla^{\tilde v} u(v)=u(\tilde v)-u(v)$, where $\tilde v\approx v$.    For a vector field $V$ on the graph we will set
\begin{align}
\|V\|_{\infty}=\max_{\tilde v\approx v} |V^{\tilde v}(v)|\, .
\end{align}

Loosely speaking when $\mu_{v,\tilde v}=\rho (|u(v)-v(\tilde v)|)$ our discrete heat equation will be a graph version of a quasilinear heat equation on functions on $\RR^n\times\RR$ of the form 
\begin{align}
\partial_t\,u=\sum_{i,j}a_{i,j}(\nabla u)\,\frac{\partial^2u}{\partial x_i\,\partial x_j}\, ,
\end{align}
where $(a_{i,j})_{i,j}$ is elliptic.  

This suggests that the parabolic maximum principle should give a gradient estimate.  We will show this next   ($\mu$ is allowed to depend on the orientation of the edges).   We will see a strengthening of this in the next section.

\begin{Cor}  \label{c:gradbound}
If $\partial_t\,u=L_{\mu}\,u$ on the graph $\Gamma$, then (in time)
\begin{align}
\|\nabla u \|_{\infty}\downarrow\, .
\end{align}
\end{Cor}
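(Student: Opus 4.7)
The plan is to deduce the corollary directly from Proposition \ref{p:maxprin} via the convex-combination identity \eqref{e:prma}. That identity rewrites the update as
\[
u(v, t+1) = \Bigl(1 - \tfrac{1}{d}\sum_{\tilde v\approx v}\mu_{v,\tilde v}\Bigr)\, u(v, t) + \tfrac{1}{d}\sum_{\tilde v\approx v}\mu_{v,\tilde v}\, u(\tilde v, t),
\]
and since $0\leq \mu_{v,\tilde v}\leq 1$ the coefficient of $u(v,t)$ is nonnegative, the coefficients sum to $1$, and so $u(v,t+1)$ is a probability-weighted average of the values $\{u(w,t): w=v \text{ or } w\approx v\}$. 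In particular, every new value satisfies $u(v,t+1)\in [m_t, M_t]$.

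Consequently, for any two adjacent vertices $v_1\approx v_2$,
\[
|u(v_2,t+1) - u(v_1,t+1)| \leq M_t - m_t.
\]
Because the committee graph $\Gamma$ is complete, every pair of distinct vertices is adjacent, and therefore
\[
\|\nabla u\|_\infty(t) = \max_{\tilde v\approx v} |u(\tilde v,t) - u(v,t)| = M_t - m_t.
\]
Proposition \ref{p:maxprin} gives $M_t\downarrow$ and $m_t\uparrow$, hence $M_t-m_t$ is nonincreasing. Taking the maximum over adjacent pairs on the left of the displayed inequality then yields $\|\nabla u\|_\infty(t+1)\leq M_t-m_t=\|\nabla u\|_\infty(t)$, i.e.\ $\|\nabla u\|_\infty\downarrow$ as claimed.

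The one delicate point is the identification $\|\nabla u\|_\infty = M_t-m_t$, which relies on $\Gamma$ being complete so that the oscillation of $u$ over all of $\Gamma$ coincides with the maximum of $|u(\tilde v)-u(v)|$ over graph edges. For a non-complete $d$-regular graph the convex-combination estimate still gives $\|\nabla u\|_\infty(t+1)\leq M_t-m_t$, but the right-hand side can be strictly larger than $\|\nabla u\|_\infty(t)$; obtaining the sharper statement in that generality would require a more careful coupling of the two transport kernels governing $u(v_1,t+1)$ and $u(v_2,t+1)$ so that only graph-adjacent differences appear in the bound. This more refined argument is presumably an ingredient of the strengthening promised in the next section.
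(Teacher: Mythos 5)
Your argument is correct and is essentially the paper's own proof: both rely on the convex-combination form of the time-one map (Proposition \ref{p:maxprin} and \eqref{e:prma}) to conclude $u(\cdot,t+1)\subset[m_t,M_t]$, hence $|u(v_1,t+1)-u(v_2,t+1)|\le M_t-m_t$, and then use completeness of $\Gamma$ to identify $\|\nabla u\|_\infty$ with the oscillation $M_t-m_t$. Your closing remark about the non-complete case is a fair observation but goes beyond what the corollary asserts, since $\Gamma$ is complete throughout this section.
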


\begin{proof}
By Proposition \ref{p:maxprin}, for all $v_1$, $v_2$
\begin{align}
|u(v_1,t+1)-u(v_2,t+1)|\leq M_{t+1}-m_{t+1}\leq M_t-m_t\, .
\end{align}
The claim follows from this.
\end{proof}

Since every pair of distinct vertices in $\Gamma$ is connected by an edge, the norm of the gradient is equal to the oscillation of the function
$\osc_u=\max_{v,w} |u(v)-u(w)|=\|\nabla u \|_{\infty}\, .$
 The obvious interpretation of Corollary \ref{c:gradbound}
 is that the difference in the extreme views of the committee narrows   over time.

\section{Convergence to consensus via decay of the gradient}

It is natural to consider models where the influence of one member on another depends on how closely aligned  their rankings are of all of the candidates.
In this case, we get a nonlinear parabolic system of equations for
the entire vector-valued ranking map.

\subsection{Vector-valued maps}
 
Suppose  that 
$
	u: \Gamma \to \cB
$
 where $\cB$ is a Banach space\footnote{If  there are finitely many candidates, then the number of them is the dimension of the Banach space $\cB$.}.  The Banach norm will be denoted by $\| \cdot \|_{\cB}$. The $\|\cdot\|_{\infty}$ norm of $\nabla u$ is given by
\begin{align}
	\| \nabla u   \|_{\infty} = \max_{v , \tv} \, \|u(v ) - u(\tv )\|_{\cB} \, .
\end{align}
Since $\cB$ is a vector space, we can define the operator $L_{\mu}$ as in \eqr{e:3p1} with coefficients $\mu_{v,\tv} $ in $[0,1]$.  An important special case is when $\mu_{v,\tv} = \rho (\| u(\tv) - u(v) \|_{\cB})$ where $\rho:\RR \to [0,1]$ is nonincreasing.  

The time one map $A_u$ is given by
	$A_u(v)=u(v)+\left(L_{\mu}\,u\right)(v)$, so that
 $\partial_t\,u=L_{\mu}\,u$ precisely when $u(v,t+1)=A_{u(\cdot,t)}(v)$.  
Arguing as in the case where $\cB = \RR$ (see Lemma \ref{l:preserve}),  
the time one map preserves the average if the $\mu$'s are symmetric.
 
	 A simple computation shows
\begin{align}	\label{e:simplecompu}
A_u(v)& = \left(1-\frac{1}{d}\sum_{\tilde v\approx v}\mu_{v,\tilde v}\right)\,u(v)+\frac{1}{d}\sum_{\tilde v\approx v}\mu_{v,\tilde v}\,u(\tilde v)\, .
\end{align}
Thus, the time one map is a convex combination of the rankings at the previous time and  the triangle inequality gives that
\begin{align}	\label{e:maxdown}
	\max_v \| A_u (v) \|_{\cB} \leq \max_v \| u (v) \|_{\cB} \, .
\end{align}

  \subsection{Exponential decay}

The next theorem gives that the gradient is not only nonincreasing, generalizing Corollary \ref{c:gradbound} to vector-valued maps, but the gradient decays when $d \geq 2$.
   This restriction is necessary as the connected graph with  two vertices is bipartite and even the linear heat equation on this graph has solutions that oscillate 
without any decay.{\footnote{This oscillation can happen when $\mu = 1$, but not when $\mu < 1$.}}  The exponential decay does not require symmetry of the $\mu$'s, it holds even when the dimension is infinite, and it is uniform as $d \to \infty$.

 \begin{Thm}     \label{l:iterateStepV}
If $1\geq \mu\geq a\geq 0$, then  
\begin{align}
	\| \nabla A_u  \|_{\infty} \leq  \e^{ - a \, \left( \frac{d-1}{d} \right) } \, 
	\| \nabla u   \|_{\infty}  \, .
\end{align}
\end{Thm}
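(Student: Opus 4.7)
The plan is to exploit the convex-combination formula \eqref{e:simplecompu} for $A_u(v)$ together with a mass-splitting (coupling) argument. The lower bound $\mu \geq a$ will be used to extract a substantial common part between the weight distributions at any two vertices, and the remaining mass will produce the contraction factor $\e^{-a(d-1)/d}$.

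For each $v \in \Gamma$ introduce the nonnegative weights $\beta_{v,v} = 1 - \frac{1}{d}\sum_{\tilde v \neq v}\mu_{v,\tilde v}$ and $\beta_{v,x} = \mu_{v,x}/d$ for $x \neq v$, so that $\sum_x \beta_{v,x} = 1$ and \eqref{e:simplecompu} reads $A_u(v) = \sum_{x \in \Gamma}\beta_{v,x}\,u(x)$. For distinct $v, w$ set $c(x) = \min(\beta_{v,x}, \beta_{w,x})$ and $S = \sum_x c(x)$. The residuals $\beta_{v,x} - c(x) \geq 0$ and $\beta_{w,x} - c(x) \geq 0$ each sum to $1 - S$, so after rescaling they yield probability measures $\tilde p_v$ and $\tilde p_w$ on $\Gamma$ with $A_u(v) - A_u(w) = (1-S)(P_v - P_w)$, where $P_v = \sum_x \tilde p_v(x)\,u(x)$ and $P_w = \sum_x \tilde p_w(x)\,u(x)$.

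The key estimate is $\|P_v - P_w\|_{\cB} \leq \|\nabla u\|_{\infty}$. This follows by rewriting the difference of two convex combinations as a convex combination of pairwise differences, $P_v - P_w = \sum_{x,y}\tilde p_v(x)\,\tilde p_w(y)\,(u(x) - u(y))$, and using $\|u(x) - u(y)\|_{\cB} \leq \|\nabla u\|_{\infty}$ for every pair, which holds since $\Gamma$ is complete. Next bound $S$ from below: for each $x \neq v, w$ both weights $\beta_{v,x} = \mu_{v,x}/d$ and $\beta_{w,x} = \mu_{w,x}/d$ are at least $a/d$, so $c(x) \geq a/d$; the remaining terms $c(v), c(w)$ are nonnegative, and there are exactly $d - 1$ vertices $x \neq v, w$, hence $S \geq (d-1)a/d$. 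The elementary inequality $1 - s \leq \e^{-s}$ then gives $1 - S \leq \e^{-a(d-1)/d}$, and combining with the previous display and taking the maximum over pairs $v, w$ completes the proof.

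The main obstacle I anticipate is the step $\|P_v - P_w\|_{\cB} \leq \|\nabla u\|_{\infty}$, since $P_v$ and $P_w$ are arbitrary convex combinations of $\{u(x)\}_{x \in \Gamma}$ and the Banach setting precludes any inner-product manipulation; the double-sum reformulation reduces it to a single application of the triangle inequality, so it should not be serious, but it is the one place where the complete-graph hypothesis is used (on a general graph one would have to bound a pairwise difference by a sum along a path, with a corresponding loss). The necessity of the restriction $d \geq 2$ appears in the lower bound for $S$: when $d = 1$ there are no vertices $x \neq v, w$, the common-mass term disappears, and the argument yields no contraction — consistent with the oscillatory behavior noted for the two-vertex bipartite graph.
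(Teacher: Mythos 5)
Your proof is correct, and it takes a genuinely different route from the paper's at the crucial contraction step. The paper applies Lemma \ref{l:induct} iteratively: for each of the $d-1$ indices $j\neq v_1,v_2$ where both weight vectors have mass at least $a/d$, it shrinks the common convex hull by a factor of $(1-a/d)$, arriving at the bound $(1-a/d)^{d-1}$, and then converts to exponential form. You instead extract all the common mass at once via the coupling $c(x)=\min(\beta_{v,x},\beta_{w,x})$, obtaining the contraction factor $1-S$ with $S\geq (d-1)a/d$. By Bernoulli's inequality $(1-a/d)^{d-1}\geq 1-(d-1)a/d$, so your one-shot bound $1-(d-1)a/d$ is actually at least as sharp as the paper's $(1-a/d)^{d-1}$ before both pass to the exponential via $1-s\leq \e^{-s}$. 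Your key estimate $\|P_v-P_w\|_{\cB}\leq \|\nabla u\|_{\infty}$ is exactly the paper's Lemma \ref{l:cvxcom}, so the underlying Banach-space input is the same; what differs is that the paper's Lemma \ref{l:induct}/Corollary \ref{c:contraction} machinery is replaced by the total-variation-style decomposition of two probability vectors into a shared part and rescaled residuals. One small point worth flagging: when $S=1$ the residual normalization degenerates, but then $\beta_{v,\cdot}=\beta_{w,\cdot}$ and $A_u(v)=A_u(w)$, so the bound holds trivially.
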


\vskip1mm
When $a>0$ is small, then  $\e^{ - a \, \left( \frac{d-1}{d} \right) } $ is approximately $1 - a \, \left(\frac{d-1}{d} \right)$.
The exponential decay will rely on two  lemmas for Banach spaces and a corollary of them.  The point is that \eqr{e:simplecompu} gives the values of the time one map as convex combinations of the initial values; hence, the next lemma shows that the oscillation is nonincreasing.

\begin{Lem}	\label{l:cvxcom}
Suppose that $x_1 , \dots , x_{d+1}$ are vectors in a Banach space $\cB$ with norm $\| \cdot \|_{\cB}$.  If 
$y= \sum_i a_i x_i$ and $z = \sum_i b_i x_i$ where $a_i , b_i \geq 0$ and $\sum_i a_i = \sum_i b_i = 1$, then
\begin{align}
	\| y - z \|_{\cB} \leq \max_{i,j} \,\| x_i - x_j \|_{\cB} \, .
\end{align}
\end{Lem}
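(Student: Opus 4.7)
The plan is to use $\sum_i a_i = \sum_i b_i = 1$ to rewrite the difference $y-z$ as a convex combination of the pairwise differences $x_i - x_j$, at which point the triangle inequality yields the claim.

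First I would set $c_i = a_i - b_i$ and note $\sum_i c_i = 0$. Split into positive and negative parts $c_i = c_i^+ - c_i^-$ where $c_i^+, c_i^- \geq 0$ and at most one of them is nonzero for each $i$. Since the $c_i$'s sum to zero, the positive and negative parts have equal total mass
\begin{equation*}
S := \sum_i c_i^+ = \sum_i c_i^- \, .
\end{equation*}
Since $|c_i| \leq a_i + b_i$, summing gives $2S = \sum_i |c_i| \leq 2$, so $S \leq 1$.

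If $S = 0$, then $y = z$ and the inequality is trivial. Otherwise, define two probability weights $p_i = c_i^+/S$ and $q_j = c_j^-/S$, which satisfy $p_i, q_j \geq 0$ and $\sum_i p_i = \sum_j q_j = 1$. Then
\begin{equation*}
y - z = \sum_i c_i x_i = S\,\Bigl(\sum_i p_i x_i - \sum_j q_j x_j\Bigr) = S \sum_{i,j} p_i\, q_j\, (x_i - x_j) \, ,
\end{equation*}
where in the last step I use that $\sum_i p_i = \sum_j q_j = 1$ to insert the missing factor in each term. Applying the triangle inequality in $\cB$,
\begin{equation*}
\| y - z \|_{\cB} \leq S \sum_{i,j} p_i\, q_j\, \| x_i - x_j \|_{\cB} \leq S \,\max_{i,j} \| x_i - x_j \|_{\cB} \leq \max_{i,j} \| x_i - x_j \|_{\cB} \, ,
\end{equation*}
since $S \leq 1$ and $\sum_{i,j} p_i q_j = 1$.

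The only subtle point is that the naive estimate $y-z = \sum_i c_i(x_i - x_{j_0})$ for a fixed $j_0$ together with $\sum_i |c_i| \leq 2$ loses a factor of two. The positive/negative decomposition, followed by rewriting the difference as the double sum $\sum_{i,j} p_i q_j (x_i - x_j)$, is precisely what removes this loss.
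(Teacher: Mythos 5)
Your proof is correct, and it proceeds by a genuinely different decomposition from the paper's. The paper sets $c_{ij} = a_i b_j$ directly, observes $\sum_{i,j} c_{ij} = 1$, and writes $y - z = \sum_{i,j} c_{ij}(x_i - x_j)$, then applies the triangle inequality — a three-line argument with no case split. You instead pass through the signed difference $c_i = a_i - b_i$, take its Hahn decomposition into $c_i^\pm$, normalize to get two probability vectors $p$ and $q$, and only then form the double sum $\sum_{i,j} p_i q_j (x_i - x_j)$ scaled by $S = \tfrac{1}{2}\sum_i |a_i - b_i|$. Your route is longer and needs the $S = 0$ case handled separately, but it buys something the paper's argument does not: the strictly sharper bound $\|y - z\|_{\cB} \leq S \, \max_{i,j}\|x_i - x_j\|_{\cB}$, where $S \in [0,1]$ is the total variation distance between the weight vectors $(a_i)$ and $(b_i)$. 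This refinement is not needed for the lemma as stated (and is not needed downstream, where the authors anyway want a uniform bound), but it is a correct and mildly more informative conclusion. The diagnostic remark at the end of your write-up — that the naive one-sided estimate loses a factor of two and the double sum removes that loss — applies equally to the paper's proof; the paper simply arrives at the fix by a shorter path.
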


\begin{proof}
Set $c_{ij} =a_i b_j$, so that $c_{ij} \geq 0$, $\sum_i c_{ij} = b_j$, $\sum_j c_{ij} = a_i$, and $\sum_{i,j} c_{ij} =1$.
We have 
\begin{align}
	\| y - z \|_{\cB} &= \| \sum_{i,j} c_{ij} (x_i - x_j) \|_{\cB} \leq   \sum_{i,j} c_{ij} \|(x_i - x_j) \|_{\cB} 
	\leq \left( \sum_{i,j} c_{ij} \right) \,  \max_{i,j} \,\| x_i - x_j \|_{\cB}  \notag \\
	&=  \max_{i,j} \,\| x_i - x_j \|_{\cB}  \, .
\end{align}
\end{proof}

 \begin{Lem}	\label{l:induct}
Let $ x_i, y  \in \cB$ be as in Lemma \ref{l:cvxcom}.   If $a_j  \geq c > 0$ for some $j$, then $y$ is in the convex hull of 
vectors $\bar{x}_i$ where 
\begin{align}
	\bar{x}_j = x_j {\text{ and }} \bar{x}_i = x_j + (1-c) (x_i - x_j) {\text{ for }} i \ne j \, .
\end{align}
In fact, $y = \sum_i \bar{a}_i \, \bar{x}_i$ where $\bar{a}_i \geq 0$ with $\sum_i \bar{a}_i =1$ are given by
\begin{align}	
	\bar{a}_j = \frac{a_j -c }{1-c}   {\text{ and }} \bar{a}_i = \frac{a_i}{1-c}  {\text{ for }} i \ne j \, .
\end{align}
\end{Lem}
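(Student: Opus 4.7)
The plan is to observe that the statement gives explicit formulas for both the vectors $\bar{x}_i$ and the coefficients $\bar{a}_i$, so the lemma reduces to three direct verifications: that the $\bar{a}_i$ are nonnegative, that they sum to one, and that $\sum_i \bar{a}_i \bar{x}_i$ equals $y$. Because $c \leq a_j \leq 1$, in the degenerate case $c = 1$ we must have $a_j = 1$ and all other $a_i = 0$, so $y = x_j = \bar{x}_j$ trivially; so I will assume $c < 1$, making $1 - c > 0$ and the formulas well-defined.

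First I would check nonnegativity. For $i \neq j$, $\bar{a}_i = a_i/(1-c) \geq 0$ since $a_i \geq 0$ by hypothesis. For $i = j$, $\bar{a}_j = (a_j - c)/(1-c) \geq 0$ since $a_j \geq c$. Next I would compute
\begin{align}
\sum_i \bar{a}_i = \frac{a_j - c}{1-c} + \sum_{i \neq j} \frac{a_i}{1-c} = \frac{\sum_i a_i - c}{1-c} = \frac{1-c}{1-c} = 1 \, ,
\end{align}
using $\sum_i a_i = 1$.

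For the main identity, I would substitute the definition of $\bar{x}_i$ and collect terms, using $\sum_i \bar{a}_i = 1$ in the first step and the definition of the $\bar{a}_i$ in the second:
\begin{align}
\sum_i \bar{a}_i \, \bar{x}_i &= \bar{a}_j \, x_j + \sum_{i \neq j} \bar{a}_i \left[ x_j + (1-c)(x_i - x_j) \right] \notag\\
&= \left(\sum_i \bar{a}_i\right) x_j + (1-c) \sum_{i \neq j} \frac{a_i}{1-c} \, (x_i - x_j) \notag\\
&= x_j + \sum_{i \neq j} a_i \, (x_i - x_j) = \left(1 - \sum_{i \neq j} a_i\right) x_j + \sum_{i \neq j} a_i \, x_i = \sum_i a_i \, x_i = y \, .
\end{align}

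There is really no serious obstacle here; the lemma is a bookkeeping identity designed to peel off a weight of $c$ concentrated at $x_j$ and redistribute what remains among vectors that have been contracted toward $x_j$ by the factor $(1-c)$. The only thing worth flagging is the edge case $c = 1$ mentioned above, which is handled separately by the trivial observation that the convex combination must then be $x_j$ itself.
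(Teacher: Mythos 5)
Your proof is correct and is essentially the same calculation as the paper's: the paper simply writes the identity $(1-c)\sum_i a_i x_i = (a_j - c)\,x_j + \sum_{i\ne j} a_i\,\bar{x}_i$ and reads off the result, whereas you expand the same bookkeeping in the reverse direction and also spell out nonnegativity and normalization of the $\bar{a}_i$. Your treatment of the degenerate case $c=1$ is a careful addition the paper omits, but there is no substantive difference in approach.
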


\begin{proof}
This follows since
\begin{align}
	(1-c) \, \sum_i a_i x_i =(a_j -c) \, x_j + \sum_{i \ne j} a_i \, \left( x_j + (1-c) (x_i - x_j) \right) \, .
\end{align}

\end{proof}

\begin{Cor}	\label{c:contraction}
Let $ x_i, y , z \in \cB$ be as in Lemma \ref{l:cvxcom}.   If $a_j  , b_j \geq c > 0$ for  at least ${d}_0$ of the $j$'s, then
\begin{align}
	\| y - z \|_{\cB} \leq (1-c)^{ {d_0} } \, \max_{i,j} \| x_i - x_j \|_{\cB} \, .
\end{align}
\end{Cor}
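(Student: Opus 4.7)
The plan is to apply Lemma \ref{l:induct} iteratively, once for each of the $d_0$ indices at which both $a_j$ and $b_j$ exceed $c$. The key insight is that Lemma \ref{l:induct} rewrites $y$ as a convex combination of modified vectors $\bar x_i$ whose pairwise differences all shrink by exactly the factor $(1-c)$. Indeed, a direct computation shows $\bar x_i - \bar x_k = (1-c)(x_i - x_k)$ for all pairs $i,k$ (including the case $k=j$, since $\bar x_j - \bar x_i = -(1-c)(x_i - x_j)$). If we can similarly rewrite $z$ as a convex combination of the \emph{same} $\bar x_i$'s, then one application of Lemma \ref{l:cvxcom} already yields the bound $\|y-z\|_{\cB} \leq (1-c) \max_{i,k}\|x_i - x_k\|_{\cB}$.

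The first step is therefore to note that Lemma \ref{l:induct} applies simultaneously to $y$ and $z$ using the same pivot index $j$, because by hypothesis both $a_j \geq c$ and $b_j \geq c$. This produces coefficients $\bar a_i, \bar b_i \geq 0$ with $\sum_i \bar a_i = \sum_i \bar b_i = 1$ such that $y = \sum_i \bar a_i \bar x_i$ and $z = \sum_i \bar b_i \bar x_i$, with a shared collection of vectors $\bar x_i$.

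The second step is to iterate. After the first pivot $j$, I would pick a second good index $j' \neq j$. To do so, I need to verify that the new coefficients still satisfy the hypothesis at $j'$, i.e.\ $\bar a_{j'} \geq c$ and $\bar b_{j'} \geq c$. But for $i \neq j$ we have $\bar a_i = a_i/(1-c) \geq a_i$, so $\bar a_{j'} \geq a_{j'} \geq c$, and similarly for $\bar b_{j'}$. Thus the hypotheses of Lemma \ref{l:induct} persist, and we may re-pivot at $j'$, obtaining new vectors $\bar{\bar x}_i$ whose pairwise differences are $(1-c)^2(x_i-x_k)$. Repeating this $d_0$ times produces vectors $\tilde x_i$ with $\max_{i,k}\|\tilde x_i - \tilde x_k\|_{\cB} = (1-c)^{d_0}\max_{i,k}\|x_i - x_k\|_{\cB}$, along with convex-combination expressions $y = \sum_i \tilde a_i \tilde x_i$ and $z = \sum_i \tilde b_i \tilde x_i$.

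The conclusion then follows immediately from Lemma \ref{l:cvxcom} applied to these last representations. The only mild subtlety I anticipate is bookkeeping: one must check at each step that a pivot at a good index does not spoil the lower bound $\geq c$ at the remaining good indices, but this is automatic because non-pivot coefficients are only \emph{enlarged} by the factor $1/(1-c)$, never decreased. No other obstacle is present; the argument is essentially a clean induction on $d_0$.
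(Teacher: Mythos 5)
Your proof is correct and follows the same strategy as the paper: iterate Lemma \ref{l:induct} once per good index, observing that each application contracts the ambient convex hull by a factor of $(1-c)$, and finish with Lemma \ref{l:cvxcom}. You supply one detail the paper's terse proof glosses over — that the remaining good coefficients survive each pivot because non-pivot coefficients are scaled up by $1/(1-c)$ — which is a genuine (if routine) point that should be checked.
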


\begin{proof}
This will follow by applying Lemma \ref{l:induct} for each of the $d_0$ indices $j$ where $a_j , b_j \geq c$.  Each time we apply the lemma, the convex hull containing both $y$ and $z$ is dilated down by a factor of $(1-c)$.
The initial diameter is bounded by $\max_{i,j} \| x_i - x_j \|_{\cB}$
by Lemma \ref{l:cvxcom}.
\end{proof}
 
  \begin{proof}[Proof of Theorem \ref{l:iterateStepV}]

Given $v_1 \ne v_2$,  the vectors $A_u(v_1) $ and $ A_u(v_2)$ are both given by \eqr{e:simplecompu} as  convex combinations of the $u(v)$'s 
\begin{align}	 \label{e:thecoeffs}
A_u(v_i) & = \left(1-\frac{1}{d}\sum_{\tilde v\approx v_i}\mu_{v_i,\tilde v}\right)\,u(v_i)+\frac{1}{d}\sum_{\tilde v\approx v_i}\mu_{v_i,\tilde v}\,u(\tilde v) \, . 
\end{align}
Thus, 
if we set $x_i = u(v_i ) \in \RR^n$ for $i=1 , \dots , d+1$, then $y \equiv A_u(v_1 )$ and $z \equiv A_u(v_2)$ both lie in the convex hull of the $x_i$'s since the coefficients in \eqr{e:thecoeffs} are nonnegative and add up to one.
Consequently, 
  Lemma \ref{l:cvxcom}  gives that
\begin{align}
	\| A_u(v_1) - A_u(v_2) \|_{\cB} \leq \max_{v , \tv} \, \|u(v ) - u(\tv )\|_{\cB} = \| \nabla u (\cdot ) \|_{\infty} \, .
\end{align}
Since this holds for every $v_1$ and $v_2$, this gives the theorem when $a=0$.  

Suppose now that $a> 0$.  It follows from \eqr{e:thecoeffs} that at least $d-1$ of the $j$'s (in fact every $j \ne 1,2$) are at least $\frac{a}{d}$ for both $y$ and $z$.  Therefore, we can apply Corollary \ref{c:contraction} to get that
\begin{align}
	\| A_u(v_1) - A_u(v_2) \|_{\cB} \leq \left( 1 - \frac{a}{d} \right)^{d-1} \, \| \nabla u (\cdot ) \|_{\infty} \, .
\end{align}
To complete the proof, use that $\log (1+x) \leq x$ to get
\begin{align}
	\left( 1 - \frac{a}{d} \right)^{d-1}= \e^{ (d-1) \, \log \left( 1 - \frac{a}{d} \right) } \leq
		 \e^{ -a \, \left( \frac{d-1}{d} \right) } \, .
\end{align}

\end{proof}

  Iterating Theorem \ref{l:iterateStepV} gives 
exponentially
 fast convergence towards consensus (like Theorem \ref{l:iterateStepV} this does not require that $\mu$ is symmetric):

\begin{Thm}   \label{t:main1}
If $\partial_t\,u=L_{\mu}\,u$ and $1\geq \mu\geq a>0$, then 
\begin{align}	\label{e:412}
	\| \nabla u(\cdot,t)  \|_{\infty} &\leq   \e^{ -a \, t\, \left( \frac{d-1}{d} \right) }  \, 
	\| \nabla u (\cdot,0)  \|_{\infty}  \, , \\
     \label{e:413}
\left\|u(\cdot,t)- \cA_{u(\cdot , t)} \right\|_{\infty}
&\leq  \frac{2d}{d+1} \, \e^{ -a \, t\,  \left( \frac{d-1}{d} \right) } \,\left\|u(\cdot,0)- \cA_{u (\cdot,0) } \right\|_{\infty} \, .
\end{align}
\end{Thm}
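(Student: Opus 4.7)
The first bound \eqref{e:412} is immediate from Theorem \ref{l:iterateStepV}. Since $\partial_t u = L_\mu u$ is equivalent to $u(\cdot , t+1) = A_{u(\cdot , t)}$, applying Theorem \ref{l:iterateStepV} once at each time step gives
\begin{align}
\| \nabla u(\cdot , t+1) \|_\infty \leq \e^{-a (d-1)/d} \, \| \nabla u(\cdot , t)\|_\infty ,
\end{align}
and iterating $t$ times yields \eqref{e:412}. The hypothesis $\mu \geq a$ at each time is preserved for free since Theorem \ref{l:iterateStepV} makes no symmetry or regularity assumption on $\mu$.

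For the second bound \eqref{e:413}, the plan is to relate the $\cA$-centered $\infty$-norm to the gradient norm in both directions. Since $\Gamma$ is the complete graph on $d+1$ vertices, every pair is an edge, so for any fixed $v$,
\begin{align}
u(v,t) - \cA_{u(\cdot , t)} = \frac{1}{d+1} \sum_{w \in \Gamma} \bigl( u(v,t) - u(w,t) \bigr) ,
\end{align}
and the triangle inequality in $\cB$ gives
\begin{align}
\| u(v,t) - \cA_{u(\cdot , t)} \|_{\cB} \leq \frac{d}{d+1} \, \| \nabla u(\cdot , t)\|_\infty .
\end{align}
Taking the max over $v$ and combining with \eqref{e:412} produces an estimate involving $\| \nabla u(\cdot , 0)\|_\infty$ on the right.

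To finish, I would go the other way at $t=0$: for any $v_1 , v_2$,
\begin{align}
\| u(v_1 , 0) - u(v_2 , 0) \|_{\cB}
\leq \| u(v_1 , 0) - \cA_{u(\cdot , 0)} \|_{\cB} + \| u(v_2 , 0) - \cA_{u(\cdot , 0)} \|_{\cB}
\leq 2 \, \| u(\cdot , 0) - \cA_{u(\cdot , 0)} \|_\infty ,
\end{align}
so that $\| \nabla u(\cdot , 0)\|_\infty \leq 2 \, \| u(\cdot , 0) - \cA_{u(\cdot , 0)} \|_\infty$. Chaining this with the previous two displays gives the constant $\frac{2d}{d+1}$ and completes \eqref{e:413}.

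There is no serious obstacle here; the proof is essentially a two-line bookkeeping argument on top of Theorem \ref{l:iterateStepV}. The only mild subtlety is that $\cA_{u(\cdot , t)}$ need not be constant in $t$ (the theorem does not assume $\mu$ is symmetric, so Corollary \ref{c:intupre} does not apply), which is why one compares the time-$t$ centering only to $\| \nabla u(\cdot , t)\|_\infty$ and lets Theorem \ref{l:iterateStepV} absorb the time dependence, rather than trying to iterate the centered quantity directly. The factor $\frac{2d}{d+1}$ is sharp in this argument — the $\frac{d}{d+1}$ comes from the average and the factor of $2$ from converting centered norm back to gradient norm at $t=0$.
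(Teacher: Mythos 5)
Your proof is correct and matches the paper's argument essentially line for line: iterate Theorem \ref{l:iterateStepV} for \eqref{e:412}, then sandwich the centered norm between the gradient norm on both sides (giving the factors $\frac{d}{d+1}$ and $2$) to deduce \eqref{e:413}. Your remark about $\cA_{u(\cdot,t)}$ not being time-independent in the asymmetric case is a nice observation that correctly explains why one routes the estimate through the gradient.
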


\begin{proof}
The first inequality follows immediately from iterating Theorem \ref{l:iterateStepV}.  
The second follows from the first since $
	\|\nabla u (\cdot , 0) \|_{\infty}\leq 2\,\left\|u(\cdot,0)- \cA_{u (\cdot,0) } \right\|_{\infty}$ and
\begin{align}
 \left\|u(w,t)-\cA_{u (\cdot , t)} \right\|_{\cB} =\left\| \frac{1}{d+1}
	\sum_v[u(w,t)-u(v,t)]\right\|_{\cB}  \leq \frac{d}{d+1} \, \|\nabla u(\cdot,t)\|_{\infty} \, .
\end{align}
\end{proof}

\begin{Cor}   \label{c:main1}
If $\partial_t\,u=L_{\mu}\,u$, $\mu^t_{v,\tilde v}=\rho (\|u(v,t)-u(\tilde v,t)\|_{\cB})$, and $a=\rho (\|\nabla u(\cdot,0)\|_{\infty})>0$, then  \eqr{e:412} and \eqr{e:413} hold.
\end{Cor}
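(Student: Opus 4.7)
The plan is to reduce the corollary to Theorem \ref{t:main1} by establishing a uniform positive lower bound on $\mu^t_{v,\tilde v}$ that holds for all times $t \geq 0$. The essential observation is that once we control $\|\nabla u(\cdot,t)\|_\infty$ by its initial value, the monotonicity of $\rho$ gives the required lower bound.

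First, I would apply Theorem \ref{l:iterateStepV} with $a=0$ at each time step to conclude that
\begin{align}
	\|\nabla u(\cdot,t+1)\|_\infty \leq \|\nabla u(\cdot,t)\|_\infty
\end{align}
regardless of whether $\mu$ has a positive lower bound. Iterating gives $\|\nabla u(\cdot,t)\|_\infty \leq \|\nabla u(\cdot,0)\|_\infty$ for every $t \geq 0$. The key point is that Theorem \ref{l:iterateStepV} was proved for arbitrary coefficients $\mu$ with $1\geq \mu \geq 0$, so it applies at each step even when the $\mu$'s change with $t$.

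Next, for any vertices $v,\tilde v$ and any time $t$, this gives $\|u(v,t)-u(\tilde v,t)\|_\cB \leq \|\nabla u(\cdot,0)\|_\infty$. Since $\rho$ is nonincreasing on $[0,\infty)$,
\begin{align}
	\mu^t_{v,\tilde v} = \rho(\|u(v,t)-u(\tilde v,t)\|_\cB) \geq \rho(\|\nabla u(\cdot,0)\|_\infty) = a > 0 \, .
\end{align}
Thus the nonlinear coefficients $\mu^t$ satisfy the hypothesis $1\geq \mu \geq a > 0$ of Theorem \ref{t:main1} uniformly in $t$.

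Finally, I would invoke Theorem \ref{t:main1} directly to conclude that both \eqref{e:412} and \eqref{e:413} hold. There is no real obstacle here: the argument is a bootstrap where the a priori monotonicity of the oscillation (obtained from the $a=0$ case) feeds a uniform ellipticity constant into the quantitative estimate. The only mild subtlety worth flagging is that one must not try to prove a better decay rate with the sharp instantaneous value $\rho(\|\nabla u(\cdot,t)\|_\infty)$, since that quantity may shrink with $t$; using the initial value $a=\rho(\|\nabla u(\cdot,0)\|_\infty)$ as a uniform lower bound is what makes the iteration in Theorem \ref{t:main1} go through cleanly.
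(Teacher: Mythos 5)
Your proof is correct and matches the paper's argument: both establish the a priori monotone bound $\|\nabla u(\cdot,t)\|_\infty \leq \|\nabla u(\cdot,0)\|_\infty$, use the monotonicity of $\rho$ to deduce the uniform lower bound $\mu^t_{v,\tilde v}\geq a>0$, and then invoke Theorem \ref{t:main1}. The paper's proof is just terser, stating these facts without spelling out that the monotonicity comes from the $a=0$ case of Theorem \ref{l:iterateStepV}.
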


\begin{proof}
This follows from Theorem \ref{t:main1} since $\|\nabla u(\cdot,t)\|_{\infty}\leq \|\nabla u(\cdot,0)\|_{\infty}$ and $\rho$ is monotone so the lower bound on the $\mu$'s is preserved.  
\end{proof}

\section{$n$-body problem}  \label{s:nbody}
Even though our equation is parabolic there are formal similarities between it and the (hyperbolic) equation that describes the classical $n$-body problem.

In physics, the $n$-body problem, \cite{Mi}, \cite{Mo1}, \cite{Mo2},
 is the problem of predicting the individual motions of a group of celestial objects interacting with each other gravitationally.  Solving this problem has been motivated by the desire to understand the motions of the Sun, Moon, planets and the visible stars. In the 20th century, understanding the dynamics of globular cluster star systems became an important $n$-body problem.

The $n$-body problem is described by the system of differential equations\footnote{This equation is derived by combining Newton's second law: $F=m\, a$ (force equal to mass times acceleration) with Newton's law of gravity that the force between two bodies with masses $m_1$ and $m_2$ is inverse proportional to the square of the distance between them: $F=G\frac{m_1\,m_2}{r^2}$.}:
\begin{align}
\frac{d^2\mathbf{x}_i(t)}{dt^2}=G \sum_{k=1,k\neq i}^n \frac{m_k \left(\mathbf{x}_k(t)-\mathbf{x}_i(t)\right)}{\left|\mathbf{x}_k(t)-\mathbf{x}_i(t)\right|^{3}}\, ,
\end{align}
where $\mathbf{x}_k(t)$ denotes the position of the $k$th object at time $t$, with mass $m_k$, and $G$ is the gravitational constant.  

This can also be thought of as a nonlinear wave equation on a graph (or rather on a graph $\times \RR$).  Namely, let $\Gamma$ be the complete graph with $M=\sum_{k}m_k$ vertices and with $n$ clusters where the $k$th cluster consists of $m_k$ vertices.  If we set $\rho (s)=(M-1)\,G\,s^{-3}$, then (at least formally) for $\mu_{i,j}=\rho (|\mathbf{x}_i-\mathbf{x}_j|)$
\begin{align}
(L_{\mu}\,\mathbf{x})_i= G\,\sum_{k=1,k\neq i}^n \frac{m_k \left(\mathbf{x}_k-\mathbf{x}_i\right)}{\left|\mathbf{x}_k-\mathbf{x}_i\right|^{3}}\, .
\end{align}
Here we think of $\mathbf{x}:\Gamma\to \RR^3$ as a map that is constant on each cluster.  

We see that the $n$-body problem can be thought of (at least formally) as the nonlinear wave equation 
$
\partial_{tt}\mathbf{x}=L_{\mu}\,\mathbf{x}\, 
$
on the complete graph $\Gamma$.  

Discretize in time yields
\begin{align}
\partial_t\mathbf{x}(\cdot,t+1)-\partial_t\mathbf{x}(\cdot,t)=L_{\mu}\,\mathbf{x}\, .
\end{align}
This can be rewritten as
\begin{align}
\partial_t\mathbf{x}(\cdot,t+1)&=\partial_t\mathbf{x}(\cdot,t)+L_{\mu}\,\mathbf{x}\, ,\\
\mathbf{x}(\cdot,t+1)&=\mathbf{x}(\cdot,t)+\partial_t\mathbf{x}(\cdot,t)\, .
\end{align}
Finally, if we let $A_{\mathbf{x},\mathbf{v}}$ denote the time one map on phase space, that is, on functions $(\mathbf{x},\mathbf{v})$ from $\Gamma$ to $\RR^3\times \RR^3$, then we get from the above that
\begin{align}
A_{\mathbf{x},\mathbf{v}}=\left(\mathbf{x}+\mathbf{v}, \mathbf{v}+L_{\mu}\,\mathbf{x}\right)\, .
\end{align}
For instance, it follows from this and Lemma \ref{l:preserve} that if at the initial time \begin{align}
\sum_km_k\,\mathbf{x}_k=\sum_{k}m_k\,\mathbf{v}_k=0\, , 
\end{align}
then the same holds for $A_{\mathbf{x},\mathbf{v}}$.    Another parallel is:  Even though our equation is nonlinear, it is still the case that if $u$ solves our equation, then so does $u$ plus a constant.  Likewise for the $n$-body problem: If $\mathbf{x}$ solves the $n$-body problem, then so does $\mathbf{x}+c_1\,t+c_0$ for constant vectors $c_0$ and $c_1$. 

For the $n$-body problem the quantities
\begin{align}
I&=\frac{1}{M}\sum_{i<j}m_i\,m_j\,|\mathbf{x}_i-\mathbf{x}_j|^2\, ,\\
U&=\sum_{i<j}m_i\,m_j\,|\mathbf{x}_i-\mathbf{x}_j|^{-1}\, ,
\end{align}
 are respectively called the moment of 
inertia and the potential.   In our language, the moment of inertia is  the energy, whereas the potential is the weighted energy.

\section{Clusters}

We will show next  that clusters of closely aligned members converge more rapidly than the committee as a whole.  We will   assume linear 
decay{\footnote{Some decay is necessary for the conclusions that follow.}}
 for $\rho$; i.e., for $1 \leq s_1 < s_2$
\begin{align}	\label{e:myrho}
	\rho (s_2) \leq    \frac{s_1}{s_2} \,  \rho (s_1)\, .
\end{align}
 
\subsection{Upper bound for the speed}

Since $\rho \leq 1$, we have $ s \, \rho (s) \leq s  \leq 1$ for $0\leq s \leq 1$.  
When $s> 1$, we use \eqr{e:myrho} to get 
\begin{align}
	s \, \rho (s) \leq   \rho (1) \leq 1 \, .
\end{align}
  It follows that the speed is bounded by
\begin{align}
	\|\partial_t\,u (v,t)\|_{\cB} \leq \frac{1}{d}\sum_{\tilde v\approx v} \|u(v,t)-u(\tilde v,t)\|_{\cB} \, \rho( \|u(v,t)-u(\tilde v,t)\|_{\cB} )  \leq
	 \frac{1}{d}\sum_{\tilde v\approx v}  1 = 1 \, .
\end{align}
As an immediate consequence, we get a lower bound for the decay of the gradient:
\begin{align}
	\| \nabla A_u \|_{\infty} \geq \| \nabla  u \|_{\infty} - 2  \, .
\end{align}

\subsection{Clusters}

Given a function $u$ on $\Gamma$, define the $u$-distance 
between   $\Gamma_0 , \Gamma_1 \subset \Gamma$  by
\begin{align}
	\dist_u (\Gamma_0 , \Gamma_1) \equiv \min_{ v \in \Gamma_0 , w \in \Gamma_1 }  \, \|u(v) - u(w)\|_{\cB}   \, .
\end{align}
Given $\Lambda \geq 1$, a  $\Lambda$-cluster is a subset $\Gamma_0 \subset \Gamma$ where
\begin{align}
	\Lambda \, \left\| \nabla \,   u |_{\Gamma_0}    \right\|_{\infty} < \dist_u (\Gamma_0 , \Gamma \setminus \Gamma_0)  \, .
\end{align}
The norm    of the gradient of $u$ restricted to $\Gamma_0$ is given by
\begin{align}
	\left\| \nabla \,   u |_{\Gamma_0}    \right\|_{\infty} = \max_{ v,w \in \Gamma_0 } \,      \|u(v) - u(w)\|_{\cB}  \, .
\end{align}

In addition to \eqr{e:myrho},
we will assume that there is some $C$ so that for $s \geq 1$
\begin{align}
	s\, |\rho'(s)| \leq C \,  \rho (s)  \, .
\end{align}

\begin{Thm}	\label{l:cluster1}
Given   $ \lambda > 1$, there exist $\Lambda_0  , \kappa > 1$ so that if  $  \left| \Gamma_0 \right| > 2$, $\Gamma_0$ is a $\Lambda$-cluster for $u$
 for some $\Lambda \geq \Lambda_0$,   
  $ \lambda \geq \left\| \nabla \,   u |_{\Gamma_0}    \right\|_{\infty}$ and $\dist_u (\Gamma_0 , \Gamma \setminus \Gamma_0)  \geq \Lambda_0$, then 
  \begin{itemize}
  \item $\kappa \, \| \nabla A_u |_{\Gamma_0} \|_{\infty} \leq  \| \nabla u |_{\Gamma_0} \|_{\infty} $.
  \item $\Gamma_0$ is a
   $\kappa \, \Lambda $-cluster for $A_u$.
  \end{itemize}

\end{Thm}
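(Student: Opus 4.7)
Write $R := \|\nabla u|_{\Gamma_0}\|_{\infty}$, $D := \dist_u(\Gamma_0, \Gamma \setminus \Gamma_0)$, and $k := |\Gamma_0| \geq 3$; by hypothesis $R \leq \lambda$, $D > \Lambda R$, and $D \geq \Lambda_0 \geq 1$. The plan is, for arbitrary $v, w \in \Gamma_0$, to split $A_u(v) = I(v) + O(v)$ with
\begin{equation*}
I(v) := u(v) + \frac{1}{d}\sum_{\tilde v \in \Gamma_0 \setminus \{v\}} \mu_{v, \tilde v}(u(\tilde v) - u(v)), \qquad O(v) := \frac{1}{d}\sum_{\tilde v \notin \Gamma_0} \mu_{v, \tilde v}(u(\tilde v) - u(v)),
\end{equation*}
contract the in-cluster piece $I$, bound $O(v) - O(w)$ by a constant times $R/\Lambda$, and choose $\Lambda_0$ large enough that the two combine into a strict contraction. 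Since $\|u(v) - u(v')\|_{\cB} \leq R \leq \lambda$ for $v, v' \in \Gamma_0$ and $\rho$ is nonincreasing, each coefficient $\mu_{v, v'}/d$ of $u(v')$ in the convex-combination representation of $I(v)$ with $v' \in \Gamma_0 \setminus \{v\}$ is at least $\rho(\lambda)/d$, and the same holds for $I(w)$. Applying Corollary \ref{c:contraction} with $c = \rho(\lambda)/d$ to the $k-2$ common indices in $\Gamma_0 \setminus \{v, w\}$ yields
\begin{equation*}
\|I(v) - I(w)\|_{\cB} \leq \alpha\, R, \qquad \alpha := (1 - \rho(\lambda)/d)^{k-2} < 1.
\end{equation*}

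\textbf{Out-of-cluster error.} A direct computation gives
\begin{equation*}
O(v) - O(w) = \frac{1}{d}\sum_{\tilde v \notin \Gamma_0}\bigl[(u(\tilde v) - u(v))(\mu_{v, \tilde v} - \mu_{w, \tilde v}) + \mu_{w, \tilde v}(u(w) - u(v))\bigr].
\end{equation*}
For the second summand, linear decay \eqref{e:myrho} applied with $s_1 = 1$, $s_2 = D$ forces $\mu_{w, \tilde v} \leq \rho(1)/D \leq 1/D$, so its contribution is at most $R/D$. For the first summand, the mean value theorem together with $s|\rho'(s)| \leq C\rho(s)$ gives $|\mu_{v, \tilde v} - \mu_{w, \tilde v}| \leq C\rho(\xi)/\xi \cdot R$ for some $\xi$ between $\|u(v) - u(\tilde v)\|_{\cB}$ and $\|u(w) - u(\tilde v)\|_{\cB}$, both of which are at least $D$. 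A case split on whether $\|u(\tilde v) - u(v)\|_{\cB}$ is smaller or larger than $\xi$, using $|\,\|u(\tilde v) - u(v)\|_{\cB} - \|u(\tilde v) - u(w)\|_{\cB}\,| \leq R$ along with the linear-decay consequences $s\rho(s) \leq \rho(1) \leq 1$ and $s \mapsto \rho(s)/s$ nonincreasing for $s \geq 1$, yields $\|u(\tilde v) - u(v)\|_{\cB} \cdot |\mu_{v, \tilde v} - \mu_{w, \tilde v}| \leq 2CR/D$. Summing over $\tilde v \notin \Gamma_0$ gives
\begin{equation*}
\|O(v) - O(w)\|_{\cB} \leq (2C+1)\,R/D \leq (2C+1)\,R/\Lambda.
\end{equation*}
This is the hardest step of the proof, since $\|u(\tilde v) - u(v)\|_{\cB}$ is a priori unbounded; the derivative growth bound is precisely what absorbs this factor and turns the first summand into an estimate of order $R/D$.

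\textbf{Choice of constants and the cluster bullet.} Set $\eta := \alpha + (2C+1)/\Lambda$, so $\|\nabla A_u|_{\Gamma_0}\|_{\infty} \leq \eta\, R$. Choosing $\Lambda_0 \geq 2(2C+1)/(1-\alpha)$ forces $\eta \leq (1+\alpha)/2 < 1$, and then any $\kappa \in (1, 1/\eta)$ establishes the first bullet. For the second bullet, the speed bound $\|\partial_t u\|_{\cB} \leq 1$ derived in the preceding subsection gives $\dist_{A_u}(\Gamma_0, \Gamma \setminus \Gamma_0) \geq D - 2$. With margin $\delta := 1 - \kappa\eta > 0$, requiring in addition $\Lambda_0 > 2/\delta$ yields
\begin{equation*}
\kappa \Lambda \|\nabla A_u|_{\Gamma_0}\|_{\infty} \leq \kappa\eta\Lambda R < \kappa\eta D \leq (1-\delta)D < D - 2 \leq \dist_{A_u}(\Gamma_0, \Gamma \setminus \Gamma_0),
\end{equation*}
so $\Gamma_0$ is a $\kappa\Lambda$-cluster for $A_u$.
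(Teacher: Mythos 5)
Your proof is correct and follows essentially the same strategy as the paper's: the decomposition $A_u = A^0_u + \bar{A}_u$ (your $I + O$), in-cluster contraction via Corollary \ref{c:contraction}, out-of-cluster error controlled by the derivative bound $s|\rho'(s)| \leq C\rho(s)$, and the speed bound $h_1 \geq h_0 - 2$ to close the second bullet. The only cosmetic difference is in the algebraic split of $\bar{A}_u(v) - \bar{A}_u(w)$ (Lemma \ref{l:barA}): the paper relabels so the $\rho$-difference multiplies the smaller out-of-cluster distance, which avoids your case split, but both land on the same $O(R/D)$ bound; you should also note explicitly that since $\eta \leq (1+\alpha)/2$ uniformly in $\Lambda \geq \Lambda_0$, one picks $\kappa \in (1, 2/(1+\alpha))$ so that $\kappa$ and $\delta$ do not depend on $\Lambda$.
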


The first conclusion in the theorem says that the cluster is contracting, while the second says that this contraction is   faster  than the rate at which the outliers approach the cluster.
This theorem can be iterated until one of the outliers comes within $\Lambda_0$  of the cluster, with the cluster contracting exponentially all the while.

\vskip2mm
In order to prove Theorem \ref{l:cluster1}, it will be convenient to divide the operator $A$ into two parts. 
We will define the operator $A^0$ on functions on $\Gamma_0$ to be essentially the time one map
 that one would get by ignoring $\Gamma \setminus \Gamma_0$.  Namely, if $v \in \Gamma_0$, then
\begin{align}
	A^0_u (v) = u (v) + \frac{1}{d} \, \sum_{\tv \in \Gamma_0 } (u(\tv) - u(v)) \, \rho (\|u(\tv) - u(v)\|_{\cB} ) \, .
\end{align}
Likewise, we define $\bar{A}_u$ to be the effect of the elements in $\Gamma \setminus \Gamma_0$, so that
\begin{align}
	\bar{A}_u (v) = \frac{1}{d} \, \sum_{w \notin \Gamma_0} (u(w) - u(v)) \, \rho (\|u(w) - u(v)\|_{\cB} ) \, .
\end{align}
It follows that
\begin{align}
	A_u (v) = A^0_u(v) + \bar{A}_u(v) \, .
\end{align}

The next lemma shows that $\bar{A}$ is a contraction on $\Gamma_0$ as long as $\Lambda_0$ is large enough.

\begin{Lem}	\label{l:barA}
Given $v_1 \ne v_2$ in $\Gamma_0$, we have
\begin{align}	\label{e:v1v2}
	\left\| \bar{A}_u (v_1) - \bar{A}_u(v_2) \right\|_{\cB} \leq \|u(v_1) - u(v_2)\|_{\cB} \, 
	\frac{d-d_0}{d}   \,   (C+1) \,  \rho (\dist_u (\Gamma_0 , \Gamma \setminus \Gamma_0)  )  \, .
\end{align}
\end{Lem}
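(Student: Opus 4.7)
The plan is to estimate $\bar A_u(v_1)-\bar A_u(v_2)$ term by term across the sum over $w\in\Gamma\setminus\Gamma_0$. Writing $F_v(w):=(u(w)-u(v))\,\rho(\|u(w)-u(v)\|_{\cB})$, we have
\[
  \bar A_u(v_1)-\bar A_u(v_2)=\frac{1}{d}\sum_{w\notin\Gamma_0}\bigl[F_{v_1}(w)-F_{v_2}(w)\bigr].
\]
Since $|\Gamma\setminus\Gamma_0|=d-d_0$ (with $d_0=|\Gamma_0|-1$), by the triangle inequality it suffices to prove the pointwise bound
\[
  \|F_{v_1}(w)-F_{v_2}(w)\|_{\cB}\leq (C+1)\,\rho(D)\,\|u(v_1)-u(v_2)\|_{\cB},
\]
where $D:=\dist_u(\Gamma_0,\Gamma\setminus\Gamma_0)$.

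Fix $w\notin\Gamma_0$ and set $x_i=u(w)-u(v_i)$, $s_i=\|x_i\|_{\cB}$; both $s_i\geq D\geq\Lambda_0\geq 1$, and $\|x_1-x_2\|_{\cB}=\|u(v_1)-u(v_2)\|_{\cB}$. Split algebraically
\[
  F_{v_1}(w)-F_{v_2}(w)=(x_1-x_2)\,\rho(s_2)+x_1\bigl(\rho(s_1)-\rho(s_2)\bigr).
\]
The first piece has norm at most $\|u(v_1)-u(v_2)\|_{\cB}\,\rho(s_2)\leq\|u(v_1)-u(v_2)\|_{\cB}\,\rho(D)$ since $\rho$ is nonincreasing and $s_2\geq D$. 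This gives the ``$+1$'' in the final constant.

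The remaining piece $s_1|\rho(s_1)-\rho(s_2)|$ is the main obstacle: a naive estimate that replaces $|\rho'|$ by its pointwise bound $C\rho/s$ leaves an uncontrolled factor of $s_1/D$. To fix this I would use that assumption \eqref{e:myrho} is equivalent to the statement that $s\rho(s)$ is nonincreasing on $[1,\infty)$. Letting $s_{\min}=\min(s_1,s_2)$ and $s_{\max}=\max(s_1,s_2)$, for every $\xi\in[s_{\min},s_{\max}]$ one gets $\xi\rho(\xi)\leq s_{\min}\rho(s_{\min})$, hence $\rho(\xi)/\xi\leq s_{\min}\rho(s_{\min})/\xi^{2}$. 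Combining this with $|\rho'(\xi)|\leq C\rho(\xi)/\xi$ and the fundamental theorem of calculus gives
\[
  |\rho(s_1)-\rho(s_2)|\leq\int_{s_{\min}}^{s_{\max}}\frac{C\,s_{\min}\,\rho(s_{\min})}{\xi^{2}}\,d\xi=C\,\rho(s_{\min})\,\frac{s_{\max}-s_{\min}}{s_{\max}}.
\]
Multiplying by $s_1\leq s_{\max}$ cancels the $s_{\max}$ in the denominator, and then $s_{\max}-s_{\min}\leq\|u(v_1)-u(v_2)\|_{\cB}$ (reverse triangle inequality) and $\rho(s_{\min})\leq\rho(D)$ yield $s_1|\rho(s_1)-\rho(s_2)|\leq C\,\rho(D)\,\|u(v_1)-u(v_2)\|_{\cB}$.

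Adding the two estimates gives the pointwise bound $(C+1)\,\rho(D)\,\|u(v_1)-u(v_2)\|_{\cB}$; summing over the $d-d_0$ terms $w\notin\Gamma_0$ and dividing by $d$ produces \eqref{e:v1v2}. The one delicate point worth flagging in the write-up is that the cancellation between the $s_1$ out front and the $1/s_{\max}$ produced by integration is \emph{precisely} what the hypothesis $s\rho(s)$ nonincreasing gives; without it one loses the correct constant.
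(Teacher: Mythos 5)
Your proof is correct, but it takes a genuinely different and slightly heavier route than the paper's. You fix the decomposition $F_{v_1}(w)-F_{v_2}(w)=(x_1-x_2)\rho(s_2)+x_1(\rho(s_1)-\rho(s_2))$, which puts the possibly larger norm $s_1=\|x_1\|_{\cB}$ in front of the $\rho$-difference; you correctly observe that the naive $|\rho'|\le C\rho/s$ estimate then leaks a factor $s_1/s_2$, and you repair this by importing the linear-decay hypothesis \eqref{e:myrho} (equivalently $s\rho(s)$ nonincreasing) and integrating $\rho'$ against $1/\xi^2$. The paper instead observes that $\|B(v_1,v_2,w)\|_{\cB}$ is symmetric in $v_1,v_2$, so one can WLOG take $D\le s_2\le s_1$ \emph{before} decomposing, and then chooses the decomposition $B=-(u(v_1)-u(v_2))\,\rho(s_1)+(u(w)-u(v_2))\,(\rho(s_1)-\rho(s_2))$, which places the \emph{smaller} norm $s_2$ in front of the $\rho$-difference; the mean value theorem together with the fact that $\rho(s)/s$ is decreasing then gives $s_2\,|\rho(s_1)-\rho(s_2)|\le C\,\rho(s_2)\,\|u(v_1)-u(v_2)\|_{\cB}$ with no extra hypothesis. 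Thus the paper's proof uses only $s|\rho'(s)|\le C\rho(s)$ and monotonicity of $\rho$, whereas yours additionally invokes \eqref{e:myrho}. Since \eqref{e:myrho} is in force throughout the section, your proof is valid in context, but it obscures that Lemma \ref{l:barA} is actually independent of the linear-decay assumption. The WLOG plus choosing the smaller norm in the second piece is the cleaner move here.
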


\begin{proof}
We can write $\bar{A}_u (v_1) - \bar{A}_u(v_2)$ as
\begin{align}
	\bar{A}_u (v_1) - \bar{A}_u(v_2) = \frac{1}{d} \, \sum_{w \notin \Gamma_0} B(v_1,v_2,w) \, ,
\end{align}
where we define $B(v_1,v_2,w)$ by
\begin{align}
B(v_1,v_2,w) \equiv
(u(w) - u(v_1)) \, \rho (\|u(w) - u(v_1)\|_{\cB} ) - (u(w) - u(v_2)) \, \rho (\|u(w) - u(v_2)\|_{\cB} ) \, . \notag
\end{align}

Fix some
  $w \notin \Gamma_0$.  After possibly switching $v_1$ and $v_2$, we can assume that
  \begin{align}
  	\dist_u (\Gamma_0 , \Gamma \setminus \Gamma_0) \leq \|u(w) - u(v_2)\|_{\cB} \leq \|u(w) - u(v_1)\|_{\cB} \, .
  \end{align}
We have
\begin{align}
	\left\| B(v_1 , v_2 , w) \right\|_{\cB} &\leq  \|u(v_1) - u(v_2)\|_{\cB} \, \rho (\|u(w) - u(v_1)\|_{\cB} ) \\
	&+ \|u(w) - u(v_2)\|_{\cB} \, \left|
	 \rho (\|u(w) - u(v_1)\|_{\cB} ) -  \rho (\|u(w) - u(v_2)\|_{\cB} )
	\right| \notag \, .
\end{align}
The term on the right in the first line is bounded by $\|u(v_1) - u(v_2)\|_{\cB} \, \rho (\dist_u (\Gamma_0 , \Gamma \setminus \Gamma_0))$ since $\rho$ is monotone.  The second line is bounded by
\begin{align}
	  &\|u(w) - u(v_2)\|_{\cB}  \, \|u(v_1) - u(v_2)\|_{\cB}\,  \sup_{s \geq \|u(w) -    u (v_2)\|_{\cB}} \,   | \rho'(s)|   \\
	  \leq C\,  \|u(v_1) - u(v_2)\|_{\cB} \,  &\rho ( \|u(w) - u(v_2)\|_{\cB})
	   \leq C\,  \|u(v_1) - u(v_2)\|_{\cB} \,  \rho (\dist_u (\Gamma_0 , \Gamma \setminus \Gamma_0))	  
	  \, .   \notag
\end{align}
Adding the two bounds and 
summing  over $w \notin \Gamma_0$ gives the lemma. 
\end{proof}

Next, we will see that $A^0$ decreases the gradient on $\Gamma_0$.

\begin{Lem}	\label{l:A0}
If $\Gamma_0$ has $d_0 +1$ vertices, then we have
\begin{align}	\label{e:fromitst}
	\| \nabla A^0_u \, |_{\Gamma_0}  \|_{\infty, } \leq  
	 \left( 1 - \frac{  \rho (\| \nabla u \, |_{\Gamma_0}\|_{\infty}) \, (d_0-1)}{2d} \right) \, 
	\| \nabla u \, |_{\Gamma_0}  \|_{\infty} 
	\, .
\end{align}

\end{Lem}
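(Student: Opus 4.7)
The plan is to adapt the proof of Theorem \ref{l:iterateStepV} to the subset $\Gamma_0$. Fix distinct $v_1, v_2 \in \Gamma_0$. Writing out the definition of $A^0$,
\begin{align*}
A^0_u(v_i) = \Bigl(1 - \frac{1}{d}\sum_{\tilde v \in \Gamma_0,\, \tilde v \ne v_i} \mu_{v_i,\tilde v}\Bigr)\, u(v_i) + \frac{1}{d}\sum_{\tilde v \in \Gamma_0,\, \tilde v \ne v_i} \mu_{v_i,\tilde v}\, u(\tilde v),
\end{align*}
where $\mu_{v_i,\tilde v} = \rho(\|u(\tilde v) - u(v_i)\|_{\cB}) \in [0,1]$. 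Since there are $d_0 \leq d$ summands each carrying weight at most $1/d$, the coefficient on $u(v_i)$ is nonnegative, so both $A^0_u(v_1)$ and $A^0_u(v_2)$ are convex combinations of the $d_0+1$ vectors $\{u(v)\}_{v \in \Gamma_0}$.

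Set $a = \rho(\|\nabla u|_{\Gamma_0}\|_{\infty})$. Since $\rho$ is nonincreasing, $\mu_{v_i,\tilde v} \geq a$ for all $v_i, \tilde v \in \Gamma_0$. Consequently, for each of the $d_0 - 1$ vertices $v_3 \in \Gamma_0 \setminus \{v_1,v_2\}$, the coefficient on $u(v_3)$ in both convex combinations is at least $a/d$. Applying Corollary \ref{c:contraction} with $c = a/d$ and these $d_0 - 1$ common indices yields
\begin{align*}
\|A^0_u(v_1) - A^0_u(v_2)\|_{\cB} \leq \Bigl(1 - \frac{a}{d}\Bigr)^{d_0 - 1} \|\nabla u|_{\Gamma_0}\|_{\infty}.
\end{align*}

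To match the form stated in the lemma, I would convert this factor using $\log(1-x) \leq -x$ to obtain $(1 - a/d)^{d_0-1} \leq \e^{-a(d_0-1)/d}$ and then apply the elementary calculus fact $\e^{-y} \leq 1 - y/2$, which holds on $0 \leq y \leq 1$. The hypothesis $\rho \leq 1$ together with $d_0 \leq d$ forces $y = a(d_0-1)/d \leq 1$, so the inequality is applicable and gives the factor $1 - \frac{a(d_0-1)}{2d}$. Taking the maximum over $v_1 \ne v_2 \in \Gamma_0$ then yields \eqr{e:fromitst}. The only subtle step is verifying the final elementary inequality on its applicable range; everything else is a direct transcription of the proof of Theorem \ref{l:iterateStepV}, with the monotonicity of $\rho$ supplying the uniform lower bound $a$ on the interior weights within the cluster.
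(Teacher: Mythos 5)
Your argument is correct and is essentially the paper's proof with Theorem \ref{l:iterateStepV} unfolded: the paper rescales the coefficients to regard $A^0$ as a time-one map on the complete graph on $\Gamma_0$ with $\tilde\mu_{v,w}=\frac{d_0}{d}\rho(|u(v)-u(w)|)$ and then invokes the theorem, while you apply the underlying Corollary \ref{c:contraction} directly with $c=a/d$ and $d_0-1$ interior vertices, arriving at the same factor $(1-a/d)^{d_0-1}$. The calculus inequality $\e^{-y}\leq 1-y/2$ for $y\in[0,1]$ that you flag as the only subtle step is indeed used implicitly in the paper's passage from the exponential statement of Theorem \ref{l:iterateStepV} to the linear factor $1-\frac{a(d_0-1)}{2d}$, so your care in verifying it is well placed.
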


\begin{proof}
 The operator $A^0$ is  the time one map for discrete functions on $\Gamma_0$ with the coefficients
 $\tilde{\mu}$ given by
 \begin{align}
 	\tilde{\mu}_{v,w} = \frac{d_0}{d} \, \rho (|u(v) - u(w)|) \geq  \frac{d_0}{d} \, \rho (\| \nabla u \, |_{\Gamma_0}\|_{\infty})  \, ,
 \end{align}
 where the last inequality is the monotonicity of $\rho$.
Theorem
\ref {l:iterateStepV} gives that
\begin{align}	 
	\| \nabla A^0_u \, |_{\Gamma_0}  \|_{\infty } & \leq \left( 1 - \frac{\frac{d_0}{d} \, \rho ( \| \nabla u \, |_{\Gamma_0}\|_{\infty}) \, (d_0-1)}{2d_0} \right) \, 
	\| \nabla u \, |_{\Gamma_0}   \|_{\infty }  \notag \\
	&= 
	 \left( 1 - \frac{  \rho (\| \nabla u \, |_{\Gamma_0}\|_{\infty}) \, (d_0-1)}{2d} \right) \, 
	\| \nabla u \, |_{\Gamma_0}  \|_{\infty } 
	\, .
\end{align}

\end{proof}

\begin{proof}[Proof of Theorem \ref{l:cluster1}]
Define constants $g_0 , g_1 , h_0$ and $h_1$ by
\begin{align}
	 g_0 &= \| \nabla u |_{\Gamma_0} \|_{\infty} \, , && g_1 = \| \nabla A_u |_{\Gamma_0} \|_{\infty} \, , \\
	 h_0 &= \dist_u (\Gamma_0 , \Gamma \setminus \Gamma_0)  , \, 
	 &&h_1  =\dist_{A_u} (\Gamma_0 , \Gamma \setminus \Gamma_0)  \, .
\end{align}
Since $\Gamma_0$ is a $\Lambda$-cluster (for $u$), we have $\Lambda \, g_0 < h_0$.  To prove the theorem, we must show that  $\kappa \, g_1 \leq g_0$ and $ \kappa \, \Lambda \, g_1 < h_1$ for some $\kappa > 1$ as long as $\Lambda$ and $h_0$ are sufficiently large.

To bound $g_1$, 
 choose $v_1 , v_2 \in \Gamma_0$ to maximize $\|A_u (v_1) - A_u (v_2)\|_{\cB}$.   Combining
 \eqr{e:v1v2} (with this choice of $v_1$ and $v_2$) and \eqr{e:fromitst} gives
 \begin{align}	 
	   \frac{ g_1}{g_0}  \leq    1 - \frac{  \rho (g_0) \, (d_0-1)}{2d} +
	\frac{d-d_0}{d}   \,   (C+1) \,  \rho (h_0)  \, .
\end{align}
Because of the decay of $\rho$, we can take $\Lambda_0$ large so that $(C+1) \,  \rho ( h_0) \leq  \frac{  \rho (g_0) \, (d_0-1)}{4d}$ and, thus,
 \begin{align}	 
	   \frac{ g_1}{g_0}  \leq    1 - \frac{  \rho (g_0) \, (d_0-1)}{4d} \leq  1 - \frac{  \rho (\lambda) \, (d_0-1)}{4d}  \, .
\end{align}
This gives $\kappa_1 \, g_1 \leq g_0$ for some $\kappa_1 > 1$.  

Let $\kappa$ be halfway between $\kappa_1$ and one.   We will show that, after possibly taking $\Lambda_0$ larger, we have  $ \kappa \, \Lambda \, g_1 < h_1$.
Since the speed is at most $1$ and $g_0 \geq 1$, the triangle inequality gives 
\begin{align}	\label{e:d1big}
	h_1 \geq h_0 - 2   \, .
\end{align}
It follows that
\begin{align}
	\frac{h_1}{g_1} =  \frac{h_1}{g_0} \, \frac{g_0}{g_1} \geq \kappa_1 \,  \frac{h_0 -2}{g_0} \geq \kappa_1 \, \Lambda \,  \frac{h_0 -2}{h_0} \, .
\end{align}
This gives the desired bound as long as $\Lambda_0 \geq \frac{2\kappa_1}{\kappa_1 - \kappa}$, completing the proof.
 
\end{proof}

Even as clusters are coming together at an exponential rate, all of the members of a cluster may drift off together toward consensus with the rest of the committee.   
 The results for the clusters extend to the case of multiple clusters with obvious modifications, as long as any two are sufficiently far apart.

 \section{Entropies}

We will see that various entropies are monotone  for this evolution equation.  In this section, 
we consider the elementary case of monotonicity under the time one map where
monotonicity will follow from a standard application of convexity and Jensen's inequality.  Later, we will prove sharper estimates for the derivative of the entropy in the continuous time case.

The entropy, \cite{Sh},  of a positive function $u:\Gamma \to \RR$ on a finite graph $\Gamma$ is
\begin{align}
	\Entropy (u) = - \sum_{v \in \Gamma} u(v) \, \log u(v) \, .
\end{align}
Throughout this section, $u$ will be a real-valued function.

\begin{Pro}	\label{p:entropymono}
If $A_u$ is the time one map for $\partial_t - L_{\mu}$ and the $\mu$'s are symmetric, then
\begin{align}
	\Entropy (A_u) \geq \Entropy (u) \, .
\end{align}
If we have equality,  $\Gamma$ has at least three elements and the $\mu$'s are positive, then $u$ is constant.
\end{Pro}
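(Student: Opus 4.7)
The plan is to exploit that, by \eqr{e:simplecompu}, the value $A_u(v)$ is a convex combination of $u(v)$ and its neighbors $u(\tilde v)$, together with the concavity of the function $\phi(x) = -x\log x$ on $(0,\infty)$ and the symmetry of the $\mu$'s. Writing the coefficients as $a_0(v) = 1 - \frac{1}{d}\sum_{\tilde v \approx v}\mu_{v,\tilde v}$ and $a_{\tilde v}(v) = \frac{\mu_{v,\tilde v}}{d}$, these are nonnegative and sum to $1$, so $A_u$ is positive wherever $u$ is, and the entropies in question are defined.

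Applying Jensen's inequality to the concave $\phi$ at each vertex gives
\begin{align}
\phi(A_u(v)) \geq a_0(v)\,\phi(u(v)) + \sum_{\tilde v \approx v} a_{\tilde v}(v)\,\phi(u(\tilde v)) \, .
\end{align}
Summing over $v \in \Gamma$, the first term contributes $\sum_v a_0(v)\,\phi(u(v))$. For the second term, interchanging the roles of $v$ and $\tilde v$ (which relabels $\mu_{v,\tilde v}$ as $\mu_{\tilde v,v}$) and using the symmetry assumption $\mu_{v,\tilde v} = \mu_{\tilde v,v}$ turns it into $\sum_v \bigl(\frac{1}{d}\sum_{\tilde v \approx v}\mu_{v,\tilde v}\bigr)\,\phi(u(v))$. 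Adding the two sums, the two coefficient pieces combine to $1$ at each $v$, yielding $\Entropy(A_u) \geq \sum_v \phi(u(v)) = \Entropy(u)$.

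The only delicate point, and hence the main obstacle, is the equality case: since $\phi$ is strictly concave, equality in Jensen's inequality at a vertex $v$ forces $u(\tilde v)$ to agree with $A_u(v)$ for every $\tilde v \approx v$ for which the weight $a_{\tilde v}(v)$ is positive, and also to agree with $u(v)$ if $a_0(v) > 0$. When the $\mu$'s are strictly positive, all $a_{\tilde v}(v) > 0$, so at each $v$ the function $u$ must be constant on the neighborhood $\Gamma \setminus\{v\}$ (and also equal to $u(v)$ if $a_0(v) > 0$). If $a_0(v) > 0$ at some $v$, we are done. Otherwise, choose two distinct vertices $v_1 \neq v_2$, which is possible since $|\Gamma| \geq 3$; the sets $\Gamma \setminus \{v_1\}$ and $\Gamma \setminus \{v_2\}$ each carry constant values of $u$, and they overlap (as $|\Gamma| \geq 3$ implies each contains a common third vertex), so the two constants agree, forcing $u$ to be constant on all of $\Gamma$.
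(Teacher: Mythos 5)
Your proof is correct and follows essentially the same route as the paper's: apply Jensen's inequality to the (strictly) concave/convex function $\pm s\log s$ at each vertex, sum over vertices, swap the order of summation and use symmetry of the $\mu$'s to recombine, then use strict concavity to characterize the equality case and conclude $u$ is constant when $|\Gamma|\geq 3$. The only cosmetic difference is that the paper factors the Jensen/symmetry step out into a standalone lemma (Lemma \ref{l:monoF}) which it then reuses for the Renyi entropies, whereas you carry it out inline; the mathematical content is the same.
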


The proposition will be a consequence of the following general lemma:

\begin{Lem}	\label{l:monoF}
Suppose that 
 the function $A$ is given by
\begin{align}
	A (v) =  \sum_{\tv \in \Gamma} a_{v,\tv} \, u(\tv) \, , 
\end{align}
where $a_{v,\tv} \geq 0$,  $\sum_{\tv} a_{v,\tv} = 1$ and $a_{v,\tv} = a_{\tv,v}$.  Given a convex $G: \RR \to \RR$, we have
\begin{align}	\label{e:mF}
	\sum_{v\in \Gamma} G(A(v)) \leq \sum_{v\in \Gamma} G(u(v)) \, .
\end{align}
If $G$ is strictly convex and we have equality in \eqr{e:mF}, then $u$ is constant on $\Gamma_v = \{ \tv \, | \, a_{v,\tv} \ne 0 \}$ for each $v$.
\end{Lem}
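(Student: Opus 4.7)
The plan is to combine Jensen's inequality applied pointwise with the symmetry (double stochasticity) of the coefficients $a_{v,\tv}$ to reorganize the resulting double sum. This is the standard route to monotonicity of a convex sum under a doubly stochastic averaging operator.

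First I would apply Jensen's inequality to each vertex $v$. Since $A(v) = \sum_{\tv} a_{v,\tv}\,u(\tv)$ is a convex combination (the weights $a_{v,\tv}$ are nonnegative and sum to $1$ in $\tv$), convexity of $G$ gives
\begin{align}
G(A(v)) \leq \sum_{\tv \in \Gamma} a_{v,\tv}\, G(u(\tv)) \, .
\end{align}
Summing this inequality over $v \in \Gamma$ yields
\begin{align}
\sum_{v \in \Gamma} G(A(v)) \leq \sum_{v \in \Gamma} \sum_{\tv \in \Gamma} a_{v,\tv}\, G(u(\tv)) = \sum_{\tv \in \Gamma} G(u(\tv)) \, \sum_{v \in \Gamma} a_{v,\tv} \, ,
\end{align}
where I just swapped the order of summation. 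Now the symmetry hypothesis $a_{v,\tv} = a_{\tv,v}$ lets me rewrite the inner sum as $\sum_{v} a_{\tv,v}$, which equals $1$ by the row-sum hypothesis applied at vertex $\tv$. Therefore the right-hand side collapses to $\sum_{\tv} G(u(\tv))$, which is \eqr{e:mF}.

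For the equality case, strict convexity of $G$ forces equality in the vertex-wise Jensen step to occur only when $u$ is constant along the support of the measure $a_{v,\cdot}$, i.e. constant on $\Gamma_v = \{\tv \mid a_{v,\tv} \ne 0\}$. Since every $v$ contributes nonnegatively to the total defect, global equality in \eqr{e:mF} forces equality at each $v$, giving the stated characterization.

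The step that requires the most care is the bookkeeping in the equality case (making sure that equality in the summed inequality forces equality at each individual vertex), but since all the terms $G(u(\tv)) \, a_{v,\tv} - G(A(v))$ are nonnegative by pointwise Jensen, this is automatic. Otherwise the argument is essentially a one-line application of Jensen combined with swapping the order of summation via the symmetry of $a$.
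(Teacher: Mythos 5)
Your proof is correct and is essentially the paper's own argument: pointwise Jensen, sum over $v$, swap the order of summation, and use symmetry plus row-stochasticity to collapse $\sum_v a_{v,\tv}$ to $1$. The equality-case bookkeeping is also handled the same way, by noting each Jensen defect is nonnegative so global equality forces pointwise equality.
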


\begin{proof}
Since $G$ is convex, Jensen's inequality gives
\begin{align}
	G(A(v)) = G \left( \sum_{\tv} a_{v,\tv} \, u(\tv)  \right) \leq \sum_{\tv} a_{v,\tv} \, G (u(\tv)) \, .
\end{align}
Summing this over $v$ and then switching the order of summation gives
\begin{align}
	\sum_v G(A(v))   \leq \sum_v \sum_{\tv} a_{v,\tv} \, G (u(\tv)) = \sum_{\tv} \left( \sum_v a_{v,\tv} \right) \, G (u(\tv))  \, .
\end{align}
If we have symmetry of the $a_{v,\tv}$'s, then 
\begin{align}
	\sum_v a_{v,\tv} = \sum_v a_{\tv , v} = 1 \, ,
\end{align}
completing the proof of \eqr{e:mF}.

If $G$ is strictly convex and we have equality in \eqr{e:mF}, then for each $v$ 
\begin{align}
	u(\tv_1) = u(\tv_2) {\text{ for every }} \tv_1 , \tv_2 {\text{ with }} a_{v, \tv_1} a_{v,\tv_2} \ne 0 \, .
\end{align}
\end{proof}

If $A$ is the time one map $A_u$ for the operator $\partial_t - L_{\mu}$ with symmetric $\mu$'s,
 then \eqr{e:simplecompu} gives
\begin{align}	 
A_u(v)& = \left(1-\frac{1}{d}\sum_{\tilde v\approx v}\mu_{v,\tilde v}\right)\,u(v)+\frac{1}{d}\sum_{\tilde v\approx v}\mu_{v,\tilde v}\,u(\tilde v)\, .
\end{align}
It follows that $A=A_u$ is of the form required by the lemma with
\begin{align}	\label{e:thea1}
	a_{v,v} &= 1-\frac{1}{d}\sum_{\tilde v\approx v}\mu_{v,\tilde v}  \, , \\
	a_{v,\tv} &= \frac{\mu_{v,\tv}}{d} \,  {\text{ if }} v \ne \tv \, .  \label{e:thea2}
\end{align}
We have that $0 \leq a_{vv} , a_{v,\tv} $ since $0 \leq \mu \leq 1$.

\begin{proof}[Proof of Proposition \ref{p:entropymono}]
The first part of the proposition follows from Lemma \ref{l:monoF} since $ \Entropy (u) =- \sum_v G(u)$ for the (strictly) convex function $G(s) = s \, \log s$. 

 If we have equality for the entropies, then $u$ is constant on the complement of every vertex by Lemma \ref{l:monoF}.  If there are more than two vertices, this forces $u$ to be constant.
\end{proof}

The same argument gives monotonicity of the Renyi entropies, \cite{R}, defined for $\alpha > 0 $, $\alpha \ne 1$, by
\begin{align}
	R_{\alpha} (u) = \frac{1}{1-\alpha} \, \log \sum_{v \in \Gamma} u^{\alpha} (v) \, .
\end{align}

\begin{Pro}	\label{p:renyi}
If $u$ is positive, $A_u$ is the time one map for $\partial_t - L_{\mu}$ and the $\mu$'s are symmetric, then
for $\alpha > 0 $, $\alpha \ne 1$,
\begin{align}
	R_{\alpha} (A_u) \geq R_{\alpha}(u) \, .
\end{align}
\end{Pro}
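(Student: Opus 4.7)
The plan is to reduce Proposition \ref{p:renyi} to Lemma \ref{l:monoF} applied to the power function $G_\alpha(s) = s^\alpha$, together with a careful tracking of the sign of the prefactor $\frac{1}{1-\alpha}$. By \eqr{e:thea1}--\eqr{e:thea2}, the time one map $A_u$ is of the form required by Lemma \ref{l:monoF}, with symmetric coefficients $a_{v,\tv}$ summing to one along each row, so the lemma applies verbatim to any convex $G$.

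First, in the case $\alpha > 1$, the function $G_\alpha(s) = s^\alpha$ is convex on $(0,\infty)$, so Lemma \ref{l:monoF} gives
\begin{align}
\sum_{v \in \Gamma} \bigl(A_u(v)\bigr)^{\alpha} \leq \sum_{v \in \Gamma} u(v)^{\alpha}\, .
\end{align}
Taking $\log$ (monotone increasing) and then multiplying by $\frac{1}{1-\alpha} < 0$ reverses the inequality, which is exactly the statement $R_\alpha(A_u) \geq R_\alpha(u)$.

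Next, in the case $0 < \alpha < 1$, the function $G_\alpha$ is concave, so Lemma \ref{l:monoF} does not apply directly. However, the same proof goes through: Jensen's inequality reverses for concave functions, giving $G_\alpha(A(v)) \geq \sum_{\tv} a_{v,\tv}\, G_\alpha(u(\tv))$; summing over $v$ and using symmetry of the $a_{v,\tv}$'s (equivalently, applying Lemma \ref{l:monoF} to the convex function $-G_\alpha$) yields
\begin{align}
\sum_{v \in \Gamma} \bigl(A_u(v)\bigr)^{\alpha} \geq \sum_{v \in \Gamma} u(v)^{\alpha}\, .
\end{align}
Now $\frac{1}{1-\alpha} > 0$, so taking $\frac{1}{1-\alpha}\log$ preserves the inequality and again yields $R_\alpha(A_u) \geq R_\alpha(u)$.

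There is essentially no hard step here; the only thing to be careful about is that the two sign reversals (convexity vs.\ concavity of $s^\alpha$, and the sign of $1-\alpha$) cancel, so the monotonicity direction of $R_\alpha$ is the same for all $\alpha > 0$, $\alpha \ne 1$. Positivity of $u$ is used only to make sense of $u^\alpha$ for non-integer $\alpha$ and of $\log$. The equality case could be addressed as in Proposition \ref{p:entropymono} using the strict convexity (resp.\ strict concavity) of $s^\alpha$ when $\alpha \ne 1$, but this is not required by the statement of the proposition.
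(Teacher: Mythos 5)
Your proof is correct and takes essentially the same route as the paper: apply Lemma \ref{l:monoF} to $G_\alpha(s)=s^\alpha$, using convexity for $\alpha>1$ and concavity for $0<\alpha<1$, and observe that the two sign changes (the direction of Jensen's inequality and the sign of $1-\alpha$) cancel. The paper states this in one sentence; you have merely filled in the bookkeeping.
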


\begin{proof}
This follows from Lemma \ref{l:monoF} since $\log$ is a monotone function and $F_{\alpha}(s) = s^{\alpha}$ is convex for $\alpha > 1$ and concave for $0< \alpha < 1$.
\end{proof}

In particular, this implies monotonicity of the $L^{\alpha}$ norms:

\begin{Cor}	\label{c:renyi}
If $u$ is positive, $A_u$ is the time one map for $\partial_t - L_{\mu}$ and the $\mu$'s are symmetric, then
for $\alpha > 1 $, 
\begin{align}
	\sum_{v \in \Gamma} A_u^{\alpha} (v) \leq \sum_{v \in \Gamma} u^{\alpha} (v) \, .
\end{align}
\end{Cor}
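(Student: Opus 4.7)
\textbf{Proof plan for Corollary \ref{c:renyi}.} The plan is to deduce this directly from Lemma \ref{l:monoF}, bypassing the logarithm in the Renyi entropies. Indeed, the statement is purely an inequality between the $L^\alpha$ sums, and Lemma \ref{l:monoF} already delivers such an inequality for any convex $G$, provided the coefficients representing the time one map are nonnegative, row-stochastic and symmetric.

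First I would verify that $A_u$ fits the hypothesis of Lemma \ref{l:monoF}. This is exactly the content of \eqr{e:simplecompu}, \eqr{e:thea1} and \eqr{e:thea2}: one sets
\begin{align}
    a_{v,v} = 1 - \frac{1}{d}\sum_{\tilde v \approx v} \mu_{v,\tilde v}, \qquad a_{v,\tilde v} = \frac{\mu_{v,\tilde v}}{d} \text{ for } \tilde v \ne v,
\end{align}
and the conditions $0 \leq \mu \leq 1$ together with the assumed symmetry $\mu_{v,\tilde v} = \mu_{\tilde v,v}$ give $a_{v,\tilde v} \geq 0$, $\sum_{\tilde v} a_{v,\tilde v} = 1$ and $a_{v,\tilde v} = a_{\tilde v,v}$. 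This is the same verification already used in the proof of Proposition \ref{p:entropymono}.

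Next I would apply Lemma \ref{l:monoF} with $G(s) = s^\alpha$. For $\alpha > 1$ this function is strictly convex on $(0,\infty)$, and since $u$ is positive the iterate $A_u$ is again positive (it is a convex combination of positive values), so $G$ is evaluated only on the domain where convexity is available. The lemma then yields directly
\begin{align}
    \sum_{v \in \Gamma} A_u^\alpha(v) = \sum_{v \in \Gamma} G(A_u(v)) \leq \sum_{v \in \Gamma} G(u(v)) = \sum_{v \in \Gamma} u^\alpha(v),
\end{align}
which is the desired conclusion. Alternatively, one could run the equivalent argument through Proposition \ref{p:renyi}: since $1-\alpha < 0$ for $\alpha > 1$, the inequality $R_\alpha(A_u) \geq R_\alpha(u)$ becomes, after multiplying by $1-\alpha$ and exponentiating (both monotone operations with appropriate direction reversals), $\sum_v A_u^\alpha \leq \sum_v u^\alpha$.

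There is no real obstacle here; the statement is essentially a restatement of the convex-$G$ case of Lemma \ref{l:monoF} for the specific convex function $s \mapsto s^\alpha$, and the only thing to check is the row-stochastic symmetric structure of the time one map, which has already been established. The slight care needed is just to track the sign of $1-\alpha$ if one prefers to route through the Renyi entropies rather than apply Lemma \ref{l:monoF} directly.
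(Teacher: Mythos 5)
Your proposal is correct and matches the paper's reasoning: the paper derives Corollary \ref{c:renyi} from Proposition \ref{p:renyi}, which in turn is an application of Lemma \ref{l:monoF} with $G(s)=s^{\alpha}$, so your direct application of Lemma \ref{l:monoF} with the same $G$ is the same argument with the logarithm detour removed. Your sign-tracking in the alternative route via Proposition \ref{p:renyi} is also correct.
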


Taking the limit as $\alpha \to \infty$ gives that the $L^{\infty}$ norms are also monotone.
This monotonicity of the $L^{\alpha}$ norms required that the $\mu$'s are symmetric.  However, the monotonicity of the $L^{\infty}$ norm in Proposition \ref{p:maxprin} did not require symmetry.

\section{Energies and Poincar\'e inequalities}	\label{s:poinc}

We will next define a discrete energy that is natural for the operator $L_{\mu}$ and prove a weighted Poincar\'e inequality for this energy. 
Throughout this section, the vector-valued map $u$ on $\Gamma$ will go into a Hilbert space $\cH$  with norm $| \cdot |$ and inner product $\langle \cdot , \cdot \rangle$.

Given a function $\sigma : \RR \to \RR$, define the $\sigma$-weighted energy by
\begin{align}
	\|\nabla u \|_{2,\sigma}^2 \equiv  \frac{1}{d} \,  \sum_{(\tv , v) \in \cE}  |u(\tv) - u(v)|^2 \, \sigma (|u(\tv) - u(v)|) \, ,
\end{align}
where $\cE$ denotes the set of unoriented edges defined in \eqr{e:edges}.
 To keep notation short, we set $\|\nabla u \|_{2}=  \|\nabla u \|_{2,1}$ when 
$\sigma \equiv 1$.    The natural energy associated to the operator $L_{\mu}$ with $\mu_{v,\tv} = \rho (|u(v) - u(\tv)|)$ is  when  $\sigma = \rho$.
 
  \begin{Lem}	\label{l:energyr}
 If $u$ is a function on $\Gamma$ and $L_{\mu}$ has $\mu_{v,\tv} = \rho (|u(v) - u(\tv)|)$, then 
 \begin{align}	\label{e:egyb}
 	\| \nabla u \|^2_{2,\rho} = - \sum_v \, \langle u(v) ,  L_{\mu} u (v) \rangle = - \sum_v \, \langle u(v)- \cA_u ,
		L_{\mu} u (v) \rangle \, .
 \end{align}
 Therefore, Cauchy-Schwarz gives
  \begin{align}	\label{e:egyb2}
 	\| \nabla u \|^2_{2,\rho} \leq  \| u- \cA_u \|_2 \,  \| L_{\mu} u \|_2 \, .
 \end{align}
 \end{Lem}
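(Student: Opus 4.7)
The plan is a direct integration-by-parts / summation-by-parts argument, using only the symmetry of $\mu$ (which is automatic here since $\mu_{v,\tv}=\rho(|u(v)-u(\tv)|)=\mu_{\tv,v}$) together with Lemma \ref{l:preserve} applied componentwise.

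First I would expand the middle expression in \eqr{e:egyb} using the definition of $L_{\mu}$:
\begin{align*}
-\sum_v \langle u(v), L_{\mu} u(v)\rangle
= -\frac{1}{d}\sum_v \sum_{\tv \approx v} \langle u(v), u(\tv)-u(v)\rangle\,\mu_{v,\tv}.
\end{align*}
Since $\mu_{v,\tv}$ is symmetric in $(v,\tv)$, I would regroup the double sum as a sum over unoriented edges in $\cE$, so each pair $(v_1,v_2)\in\cE$ contributes the two terms
\begin{align*}
-\mu_{v_1,v_2}\bigl(\langle u(v_1), u(v_2)-u(v_1)\rangle + \langle u(v_2), u(v_1)-u(v_2)\rangle\bigr)
= \mu_{v_1,v_2}\,|u(v_1)-u(v_2)|^2,
\end{align*}
where the last equality uses bilinearity of the inner product on $\cH$. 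Dividing by $d$ recovers exactly $\|\nabla u\|_{2,\rho}^2$, which establishes the first identity in \eqr{e:egyb}.

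Next, for the second identity I would note that because the $\mu$'s are symmetric, Lemma \ref{l:preserve}, applied coordinatewise (or more precisely to each component of $u$ viewed in $\cH$), yields $\sum_v L_{\mu}u(v)=0$ in $\cH$. Consequently
\begin{align*}
\sum_v \langle \cA_u, L_{\mu}u(v)\rangle = \bigl\langle \cA_u,\sum_v L_{\mu}u(v)\bigr\rangle = 0,
\end{align*}
so replacing $u(v)$ by $u(v)-\cA_u$ in the inner product sum leaves it unchanged. Finally, Cauchy–Schwarz (first pointwise in $\cH$, then in the $\ell^2$ sense over $\Gamma$) gives
\begin{align*}
\|\nabla u\|_{2,\rho}^2 = -\sum_v \langle u(v)-\cA_u, L_{\mu}u(v)\rangle \leq \sum_v |u(v)-\cA_u|\,|L_{\mu}u(v)| \leq \|u-\cA_u\|_2\,\|L_{\mu}u\|_2,
\end{align*}
which is \eqr{e:egyb2}.

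There is no substantive obstacle in this proof; it is a routine summation-by-parts. The only point to be careful about is the symmetry of $\mu$: it is used both to combine the two oriented-edge terms into $|u(v_1)-u(v_2)|^2$ and to invoke Lemma \ref{l:preserve} for the $\cA_u$ identity. Both uses are automatic from $\mu_{v,\tv}=\rho(|u(v)-u(\tv)|)$.
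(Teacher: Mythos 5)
Your proof is correct and matches the paper's argument in substance: the paper symmetrizes by interchanging $v$ and $\tv$ and adding, which is the same regrouping over unoriented edges you carry out, and the second identity and Cauchy--Schwarz step are handled identically. Your remark that Lemma \ref{l:preserve} is applied componentwise (since $u$ is $\cH$-valued here) is a small but correct clarification the paper leaves implicit.
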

 
 \begin{proof}
 We have
  \begin{align}
 	\sum_v \, \langle u(v) \, , L_{\mu} u (v) \rangle &= \frac{1}{d} \, \sum_{\tv \ne v} \langle u(v)\, , (u(\tv) - u(v)) 
	\rangle \, \rho (|u(\tv) - u(v)|) \notag \\
	&= \frac{1}{d} \, \sum_{\tv \ne v} \langle u(\tv)\,  , (u(v) - u(\tv)) \rangle \, \rho (|u(\tv) - u(v)|) \, , 
 \end{align}
 where the second equality is interchanging $v$ and $\tv$.  Adding the two equations gives
  \begin{align}
 	2\, \sum_v \, \langle u(v) \, , L_{\mu} u (v) \rangle &= - \frac{1}{d} \, \sum_{\tv \ne v}  |u(\tv) - u(v)|^2 \, \rho (|u(\tv) - u(v)|) \notag \\
		&= - \frac{2}{d} \, \sum_{(\tv , v) \in \cE}  |u(\tv) - u(v)|^2 \, \rho (|u(\tv) - u(v)|)  \, , 
 \end{align}
 giving the first equality in \eqr{e:egyb}.  The second equality in \eqr{e:egyb} uses  $\sum_v L_{\mu} u(v) = 0$.
 \end{proof}

 \begin{Lem}	\label{l:Lene}
 If $u$ is a function on $\Gamma$ and $L_{\mu}$ has $\mu_{v,\tv} = \rho (|u(v) - u(\tv)|)$, then 
 \begin{align}
 	\| L_{\mu} u \|_2^2 \leq 2 \, \| \nabla u \|_{2,\rho^2}^2 \, .
 \end{align}
 \end{Lem}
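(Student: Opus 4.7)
The plan is to prove the bound pointwise at each vertex via Cauchy–Schwarz, then sum over $\Gamma$ and recognize the result as twice the $\rho^2$-weighted energy.

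First, fix $v \in \Gamma$. The vector $L_\mu u(v)$ is defined as the average
\begin{align}
	L_\mu u(v) = \frac{1}{d}\sum_{\tilde v \approx v} [u(\tilde v) - u(v)]\, \rho(|u(\tilde v) - u(v)|),
\end{align}
i.e.\ it is $\frac{1}{d}$ times a sum of $d$ vectors in $\cH$. Applying Cauchy–Schwarz in $\cH$ (or equivalently the convexity of $|\cdot|^2$) to this sum yields
\begin{align}
	|L_\mu u(v)|^2 \leq \frac{1}{d} \sum_{\tilde v \approx v} |u(\tilde v) - u(v)|^2\, \rho^2(|u(\tilde v) - u(v)|).
\end{align}

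Second, sum this pointwise estimate over $v \in \Gamma$. Each unoriented edge in $\cE$ corresponds to two ordered pairs $(v,\tilde v)$ and $(\tilde v, v)$, both of which contribute the same quantity $|u(\tilde v) - u(v)|^2\, \rho^2(|u(\tilde v) - u(v)|)$. Thus
\begin{align}
	\|L_\mu u\|_2^2 \leq \frac{1}{d}\sum_v \sum_{\tilde v \approx v} |u(\tilde v) - u(v)|^2\, \rho^2(|u(\tilde v) - u(v)|) = \frac{2}{d}\sum_{(\tilde v, v)\in \cE} |u(\tilde v) - u(v)|^2\, \rho^2(|u(\tilde v) - u(v)|),
\end{align}
which by the definition of $\|\nabla u\|_{2,\rho^2}^2$ is exactly $2\|\nabla u\|_{2,\rho^2}^2$.

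There is no real obstacle here: the factor of $2$ arises purely from the double-counting when passing from a sum over ordered neighbors to a sum over unoriented edges, and the Cauchy–Schwarz step is the only analytic ingredient. One point worth noting is that the proof works in any Hilbert space $\cH$ with no modification, since Cauchy–Schwarz holds for the norm $|\cdot|$; no symmetry of the $\mu$'s is needed because $\mu_{v,\tilde v} = \rho(|u(v) - u(\tilde v)|)$ is already symmetric in the edge.
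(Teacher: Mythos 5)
Your proof is correct and follows essentially the same route as the paper: apply Cauchy--Schwarz (equivalently, convexity of the squared norm) to the average defining $L_\mu u(v)$, then sum over $v$ and observe that the sum over ordered neighbor pairs double-counts each unoriented edge, producing the factor of $2$. The only difference is that you spell out the double-counting step, which the paper leaves implicit.
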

 
 \begin{proof}
 Cauchy-Schwarz gives
 \begin{align}
 	\| L_{\mu} u \|_2^2 &= \sum_v \left| \frac{1}{d} \, \sum_{\tv \approx v} ( u(\tv) - u(v) ) \, \rho (|u(\tv) - u(v)|) \right|^2 \notag \\
		&\leq  \sum_v  \frac{1}{d} \, \sum_{\tv \approx v} | u(\tv) - u(v) |^2 \, \rho^2 (|u(\tv) - u(v)|) = 
		2 \, \| \nabla u \|_{2,\rho^2}^2 \, .
 \end{align}
 
 \end{proof}

  \subsection{Weighted Poincar\'e inequalities}
  
   The next proposition is a nonlinear Poincar\'e inequality for the $\rho$-energy  or, equivalently, a nonlinear eigenvalue estimate for $L_{\mu}$, when 
   $\rho (s) \geq s^{-\alpha}$.  The special case $\alpha =0$ and $\rho$ is a constant
    is the standard linear Poincar\'e inequality.
 
 \begin{Pro}		\label{l:nonpoin}
 Suppose that $u: \Gamma \to \RR$ and $\rho (s) \geq s^{-\alpha}$ for some $\alpha \geq 0$.
 If $0 < \alpha \leq 1$, then 
 \begin{align}
 	\| u - \cA_u \|_2^{2-\alpha} 
		 \leq  2 \, \left( \frac{d}{d+1}  \right)^{2-\alpha} \,  \| \nabla u \|_{2,\rho}^2 \, .   \label{e:pnoi}
 \end{align}
  When $1< \alpha <2$, we have
   \begin{align}
 	\| u - \cA_u \|_2^{2-\alpha} 
		 \leq      \frac{2d}{\left(d+1 \right)^{2-\alpha}} \,  \| \nabla u \|_{2,\rho}^2 \, .
 \end{align}
 \end{Pro}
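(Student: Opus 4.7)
The plan is to prove the uniform bound
\begin{equation*}
\|u - \cA_u\|_2^{2-\alpha} \leq \frac{2^\alpha \, d}{d+1} \, \|\nabla u\|_{2,\rho}^2
\end{equation*}
for every $\alpha \in (0,2)$, and to verify that it implies both displayed cases.  The strategy combines three ingredients: the linear Poincar\'e identity on the complete graph, the elementary gradient bound $\|\nabla u\|_\infty \leq 2\,\|u-\cA_u\|_2$, and the pointwise hypothesis $\rho(s) \geq s^{-\alpha}$.

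First, a direct expansion gives the linear identity $\sum_{(\tv,v) \in \cE} |u(\tv) - u(v)|^2 = (d+1)\,\|u - \cA_u\|_2^2$ (this is the Poincar\'e inequality for $\rho \equiv 1$ on the complete graph with $d+1$ vertices; in fact it holds with equality).  Next, for any edge $(v,\tv)$ the triangle inequality yields $|u(\tv) - u(v)| \leq 2\,\|u - \cA_u\|_\infty \leq 2\,\|u - \cA_u\|_2$, so $\|\nabla u\|_\infty \leq 2\,\|u - \cA_u\|_2$.  The heart of the argument is the edgewise factorization $a^2 = a^{2-\alpha}\cdot a^\alpha \leq a^{2-\alpha}\,\|\nabla u\|_\infty^\alpha$ applied to $a=|u(\tv)-u(v)|$.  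Summing over edges and using $\rho(s) \geq s^{-\alpha}$ (which gives $\sum_{(\tv,v) \in \cE} a^{2-\alpha} \leq d\,\|\nabla u\|_{2,\rho}^2$) produces
\begin{equation*}
(d+1)\,\|u - \cA_u\|_2^2 \leq 2^\alpha\, d\,\|u-\cA_u\|_2^\alpha\,\|\nabla u\|_{2,\rho}^2.
\end{equation*}
Dividing by $\|u-\cA_u\|_2^\alpha$, which is harmless since the conclusion is trivial when $u\equiv \cA_u$, gives the uniform bound.

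To recover the proposition's two constants, for $0 < \alpha \leq 1$ one verifies $\frac{2^\alpha d}{d+1} \leq 2\left(\frac{d}{d+1}\right)^{2-\alpha}$, which rearranges to $((d+1)/2)^{1-\alpha} \leq d^{1-\alpha}$, i.e.\ $d+1 \leq 2d$; and for $1 < \alpha < 2$ one verifies $\frac{2^\alpha d}{d+1} \leq \frac{2d}{(d+1)^{2-\alpha}}$, which reduces to $2^{\alpha - 1} \leq (d+1)^{\alpha - 1}$, i.e.\ $2 \leq d+1$.  Both hold for $d \geq 1$.  I expect no real obstacle, but I flag a subtle choice: the interpolation step deliberately splits $a^2 = a^{2-\alpha}\cdot a^\alpha$ and invokes $\|\nabla u\|_\infty \leq 2\,\|u-\cA_u\|_2$ on the second factor.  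The naive subadditivity bound $\left(\sum a^2\right)^{(2-\alpha)/2} \leq \sum a^{2-\alpha}$ instead produces the constant $d/(d+1)^{(2-\alpha)/2}$, which fails to match the stated bound when $\alpha$ is near $1$ and $d$ is large; so the $\ell^\infty$ gradient estimate is what makes the scaling of the constants work out.
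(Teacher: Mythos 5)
Your proof is correct, and it takes a genuinely different route from the paper's. The paper's proof starts from the pointwise identity $u(v)-\cA_u = \frac{1}{d+1}\sum_{\tv\ne v}(u(v)-u(\tv))$, applies the power-mean (H\"older/Jensen) inequality to the inner average $\frac{1}{d}\sum_{\tv}|u(\tv)-u(v)|$ to pass to the $(2-\alpha)$-th moment, and then merges the outer sum over $v$ using the elementary super-additivity $\sum x_i^p \leq (\sum x_i)^p$ for $p=\frac{2}{2-\alpha}>1$ (Lemma \ref{l:elemi}); the two ranges of $\alpha$ are handled with slightly different normalizations of the inner average, which produces the two case-dependent constants. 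You instead use the exact linear Poincar\'e identity $\sum_{(\tv,v)\in\cE}|u(\tv)-u(v)|^2 = (d+1)\,\|u-\cA_u\|_2^2$ on the complete graph, interpolate edgewise via $a^2 \leq a^{2-\alpha}\,\|\nabla u\|_\infty^\alpha$, and close the estimate with the $\ell^\infty$-to-$\ell^2$ bound $\|\nabla u\|_\infty \leq 2\,\|u-\cA_u\|_2$. Your route yields a single uniform constant $\frac{2^\alpha d}{d+1}$ valid across all of $\alpha\in(0,2)$, which (as your algebra at the end confirms) is at least as small as both constants in the proposition for $d\geq 1$, and strictly smaller when $d\geq 2$ and $\alpha\neq 1$; so your version unifies the two cases, avoids the case split, and is marginally sharper. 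The paper's argument trades this for not needing the $\ell^\infty$ gradient estimate, keeping the whole thing inside Jensen-type inequalities, which is perhaps more in the spirit of the weighted Poincar\'e inequalities used later. Both arguments are elementary and correct; the divide-by-$\|u-\cA_u\|_2^\alpha$ step is harmless for the reason you give.
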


\begin{proof}
It suffices to prove the case   $\rho (s) = s^{-\alpha}$.
When $0\leq \alpha\leq 1$, we have $2-\alpha \geq 1$.  For each $v$, 
  the H\"older inequality (or the triangle inequality when $\alpha = 1$) gives
\begin{align}
 	\left|     \frac{1}{d} \, \sum_{\tv \ne v} (u (\tv) - u(v))    \right| 
		 &\leq \left(    \frac{1}{d}  \sum_{\tv \ne v} |u(\tv) - u(v)|^{2-\alpha} 
		 \right)^{ \frac{1}{2-\alpha} } 
		 \, .
 \end{align}
Multiplying by $\frac{d}{d+1}$, squaring, and summing over $v$ gives
\begin{align}
 	\| u - \cA_u \|_2^2 &= \sum_v \left(    \frac{1}{d+1} \, \sum_{\tv \ne v} (u (\tv) - u(v))    \right)^2
		 \leq  \frac{d^2}{(d+1)^2} \, \sum_v \left(    \frac{1}{d}  \sum_{\tv \ne v} |u(\tv) - u(v)|^{2-\alpha} 
		 \right)^{ \frac{2}{2-\alpha} } 
		 \notag \\
		  &\leq  \frac{d^2}{(d+1)^2} \, \left( \frac{1}{d} \,  \sum_v    \sum_{\tv \ne v} |u(\tv ) - u(v)|^{2-\alpha} 
		 \right)^{ \frac{2}{2-\alpha} } =  \frac{d^2}{(d+1)^2}  \left( 2\,  \| \nabla u \|^2_{2,\rho} \right)^{ \frac{2}{2-\alpha} } 
		 \, ,
 \end{align}
 where the last inequality used Lemma \ref{l:elemi} below.
 
 Suppose next that $1< \alpha < 2$.  Using \eqr{e:xipP} below with $p = \frac{1}{2-\alpha} > 1$  
 gives for each $v$
 \begin{align}
 	     \sum_{\tv \ne v} |u (\tv) - u(v)|
		 &\leq \left(    \sum_{\tv \ne v} |u(\tv) - u(v)|^{2-\alpha} 
		 \right)^{ \frac{1}{2-\alpha} } 
		 \, .
 \end{align}
 Squaring this, summing over $v$ and applying \eqr{e:xip} below gives
 \begin{align}
 	\| u - \cA_u \|_2^2 &=  \frac{1}{(d+1)^2}  \sum_v \left(   \, \sum_{\tv \ne v} (u (\tv) - u(v))    \right)^2
		 \leq  \frac{1}{(d+1)^2} \, \sum_v \left(     \sum_{\tv \ne v} |u(\tv) - u(v)|^{2-\alpha} 
		 \right)^{ \frac{2}{2-\alpha} } 
		 \notag \\
		  &\leq  \frac{1}{(d+1)^2} \, \left(    \sum_v    \sum_{\tv \ne v} |u(\tv ) - u(v)|^{2-\alpha} 
		 \right)^{ \frac{2}{2-\alpha} }  = \frac{1}{(d+1)^2}  \left( 2\, d\, \| \nabla u \|^2_{2,\rho} \right)^{ \frac{2}{2-\alpha} } 
		 \, .
 \end{align}
 
 \end{proof}

 In some of the main cases of interest, we restrict to functions $\rho$ with $\rho \leq 1$.  The next corollary gives a Poincar\'e inequality that can be applied in these cases where we do not have $\rho\geq s^{-\alpha}$ for all $s>0$.  The function $\rho$ is always assumed to be nonincreasing.
 
 \begin{Cor}		\label{c:poin1}
  Suppose that $u: \Gamma \to \RR$ and $\rho (s) \geq c\, s^{-\alpha}$ for $s \geq 1$ for some $c  > 0$ and $\alpha \in (0,2)$.  There exists $C_{c,d,\alpha}$ depending on $c$, $d$ and $\alpha$ so that
 \begin{align}
 	\| u - \cA_u \|_2^{2-\alpha} 
		 \leq  C_{c,d,\alpha} \,   \left( 
		   \| \nabla u \|_{2,\rho}^2 +   \| \nabla u \|_{2,\rho}^{2-\alpha}
		 \right) \, .   
 \end{align}
   \end{Cor}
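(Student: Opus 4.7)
The plan is to adapt the proof of Proposition \ref{l:nonpoin} by splitting, for each vertex $v$, the neighbors $\tv$ according to whether $|u(\tv) - u(v)| < 1$ (``short'' edges) or $|u(\tv) - u(v)| \geq 1$ (``long'' edges). The hypothesis gives the desired lower bound on $\rho$ along long edges directly, while the monotonicity of $\rho$ forces $\rho(s) \geq \rho(1) \geq c \cdot 1^{-\alpha} = c$ on $(0,1)$, which is the relevant lower bound along short edges. So the proof of Proposition \ref{l:nonpoin} should basically go through, but with different H\"older exponents on the two pieces.

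Concretely, starting from $|u(v) - \cA_u| \leq (d+1)^{-1}(S_v^{<} + S_v^{\geq})$, where $S_v^{<}$ (respectively $S_v^{\geq}$) sums $|u(\tv) - u(v)|$ over the short (respectively long) neighbors of $v$, Cauchy--Schwarz gives
\[
S_v^{<} \leq \sqrt{d}\,\Bigl(\sum_{\text{short}\ \tv}|u(\tv) - u(v)|^2\Bigr)^{1/2} \leq \sqrt{d/c}\,R_v^{1/2}, \quad R_v \equiv \sum_{\tv \neq v} |u(\tv) - u(v)|^2 \, \rho(|u(\tv) - u(v)|),
\]
since $|u(\tv) - u(v)|^2 \leq c^{-1} |u(\tv) - u(v)|^2 \rho(|u(\tv) - u(v)|)$ on short edges. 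For the long part I would use the H\"older/subadditivity step from the proof of Proposition \ref{l:nonpoin} to bound
\[
S_v^{\geq} \leq C_{d,\alpha}\Bigl(\sum_{\text{long}\ \tv}|u(\tv) - u(v)|^{2-\alpha}\Bigr)^{1/(2-\alpha)},
\]
and then the hypothesis $\rho(s) \geq c\, s^{-\alpha}$ on $[1,\infty)$ gives $|u(\tv) - u(v)|^{2-\alpha} \leq c^{-1} |u(\tv) - u(v)|^2 \rho$, so that $S_v^{\geq} \leq C_{d,\alpha,c}\,R_v^{1/(2-\alpha)}$.

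Combining and squaring yields $|u(v) - \cA_u|^2 \leq C_{c,d,\alpha}\bigl(R_v + R_v^{2/(2-\alpha)}\bigr)$. Summing over $v$, using $\sum_v R_v = 2d\,\|\nabla u\|_{2,\rho}^2$ together with the elementary inequality $\sum_v R_v^p \leq (\sum_v R_v)^p$ for $p = 2/(2-\alpha) \geq 1$ and $R_v \geq 0$, I obtain
\[
\|u - \cA_u\|_2^2 \leq C_{c,d,\alpha}\left(\|\nabla u\|_{2,\rho}^2 + \|\nabla u\|_{2,\rho}^{4/(2-\alpha)}\right).
\]
Finally, raising to the power $(2-\alpha)/2 \in (0,1)$ and applying the elementary inequality $(a+b)^p \leq a^p + b^p$ for $p \in (0,1)$ gives the claimed bound.

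The main obstacle is essentially bookkeeping: keeping the two regimes $\alpha \in (0,1]$ and $\alpha \in (1,2)$ straight in the H\"older step for the long edges, where $p = 2-\alpha$ is $\geq 1$ in the first case (so one uses genuine H\"older with a $d^{(1-\alpha)/(2-\alpha)}$ factor) and $<1$ in the second (so one uses the subadditivity of $x^{2-\alpha}$ instead). Both regimes produce a bound of the same schematic form $\sum a_i \leq C_{d,\alpha}(\sum a_i^{2-\alpha})^{1/(2-\alpha)}$, so the rest of the argument is unified, and no serious new idea beyond Proposition \ref{l:nonpoin} is needed.
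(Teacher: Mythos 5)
Your proof is correct, and the central observation is the same as the paper's: split edge differences at length $1$, use monotonicity of $\rho$ to get $\rho \geq c$ on short edges and the hypothesis $\rho(s) \geq c\,s^{-\alpha}$ on long edges. But the way you deploy this split is structurally different. The paper encodes the split as a single pointwise inequality on each edge, namely $s^{2-\alpha} \leq \frac{s^2\rho(s)}{c} + \bigl(\frac{s^2\rho(s)}{c}\bigr)^{(2-\alpha)/2}$ valid for all $s$, and then simply invokes Proposition~\ref{l:nonpoin} as a black box (applied with the pure power weight $s^{-\alpha}$) to reduce everything to summing that pointwise bound over edges. You instead re-run the per-vertex argument inside the proof of Proposition~\ref{l:nonpoin}: you split the neighbor sum $S_v = S_v^{<} + S_v^{\geq}$ and apply Cauchy--Schwarz to the short piece and the $2-\alpha$ H\"older/subadditivity step to the long piece, producing a bound $|u(v)-\cA_u|^2 \lesssim R_v + R_v^{2/(2-\alpha)}$. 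The paper's route is shorter and cleaner precisely because it reuses Proposition~\ref{l:nonpoin} rather than re-deriving it; yours is more self-contained and slightly more explicit about where the constants come from. Both arrive at $\|u-\cA_u\|_2^2 \lesssim \|\nabla u\|_{2,\rho}^2 + \|\nabla u\|_{2,\rho}^{4/(2-\alpha)}$ and then take a $(2-\alpha)/2$ power with the subadditivity $(a+b)^p\leq a^p+b^p$ for $p<1$, so the endgame is identical. One small point worth making explicit in your write-up: the step $\sum_v R_v^{2/(2-\alpha)} \leq (\sum_v R_v)^{2/(2-\alpha)}$ is Lemma~\ref{l:elemi} with $p=2/(2-\alpha)>1$, which is exactly the counterpart of the power-mean absorption the paper uses implicitly when summing the $(s^2\rho/c)^{(2-\alpha)/2}$ terms over edges.
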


 \begin{proof}
If  $s \geq 1$, then $s^{2-\alpha} \leq \frac{s^2 \rho (s)}{c}$.  When $s< 1$, then $c \leq \rho (s)$ and 
 $s^{2-\alpha} \leq \left( \frac{s^2 \rho (s)}{c} \right)^{ \frac{2-\alpha}{2} }$.  It follows that we get for all $s$ that
\begin{align}
	s^{2-\alpha} \leq  \frac{s^2 \rho (s)}{c} + \left( \frac{s^2 \rho (s)}{c} \right)^{ \frac{2-\alpha}{2} } \, .
\end{align}
Summing over the vertices and applying Proposition \ref{l:nonpoin} gives the corollary.
  \end{proof}

 \begin{Lem}	\label{l:elemi}
 If $p>1$ and $x_i \geq 0$ for $i=1 , \dots , n$, then
 \begin{align}	\label{e:xip}
 	\sum_i x_i^p \leq \left( \sum_i x_i \right)^p \, .
 \end{align}
 Equivalently, we get
 \begin{align}	\label{e:xipP}
 	\sum_i x_i \leq \left( \sum_i x_i^{ \frac{1}{p} } \right)^p \, .
 \end{align}
 \end{Lem}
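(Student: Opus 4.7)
The two displayed inequalities \eqref{e:xip} and \eqref{e:xipP} are equivalent by the substitution $y_i = x_i^{1/p}$ (so $x_i = y_i^p$), which turns one into the other. My plan is therefore to prove \eqref{e:xip} and note that \eqref{e:xipP} is immediate from it.

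To prove \eqref{e:xip}, I would use a normalization argument. If $S \equiv \sum_i x_i = 0$, then every $x_i = 0$ and both sides vanish, so we may assume $S > 0$. Set $y_i = x_i/S$, so that $y_i \in [0,1]$ and $\sum_i y_i = 1$. Since $p > 1$ and $0 \leq y_i \leq 1$, we have $y_i^{p-1} \leq 1$, hence $y_i^p \leq y_i$. Summing gives
\begin{align}
\sum_i y_i^p \leq \sum_i y_i = 1,
\end{align}
which, after multiplying through by $S^p$, is exactly $\sum_i x_i^p \leq S^p = (\sum_i x_i)^p$, as desired.

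There is no real obstacle here; the only step that requires any thought at all is noticing the normalization $y_i = x_i/S$ and exploiting the elementary fact that $t^p \leq t$ for $t \in [0,1]$ when $p \geq 1$. If one prefers a proof by induction, the base case $n = 2$ reduces to the observation
\begin{align}
a^p + b^p = a \cdot a^{p-1} + b \cdot b^{p-1} \leq a(a+b)^{p-1} + b(a+b)^{p-1} = (a+b)^p
\end{align}
for $a, b \geq 0$ and $p \geq 1$, and then induction on $n$ by grouping $x_1 + \cdots + x_{n-1}$ as a single term finishes the proof. Either approach works and is very short.
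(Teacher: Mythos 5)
Your normalization argument is exactly the paper's proof: both set $y_i = x_i/\sum_i x_i$, use $y_i^p \leq y_i$ for $y_i \in [0,1]$ with $p>1$, and sum. The proposal is correct, and the extra induction sketch is a harmless alternative.
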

 
 \begin{proof}
 Since \eqr{e:xip} implies \eqr{e:xipP}, it suffices to prove \eqr{e:xip}.
 We may assume that $\sum_i x_i  > 0$ since the lemma holds trivially otherwise.  
 Define $y_i \geq 0$ by setting 
 \begin{align}
 	y_i \equiv \frac{ x_i}{ \sum_i x_i } \, .
 \end{align}
 It follows that $\sum y_i = 1$ and $y_i^p \leq y_i$ and, thus,
 \begin{align}
 	 \frac{ \sum_i x_i^p }{ \left( \sum_i x_i \right)^p} = \sum \left(  \frac{ x_i}{ \sum_i x_i }  \right)^p =  \sum_i  y_i^p \leq \sum_i y_i = 1 \, .
 \end{align}
 \end{proof}

\section{The continuous case}

Let $u:\Gamma \times [0,\infty) \to \cH$, so that time is now continuous, and satisfying $\partial_t u = L_{\mu} u$
where $\mu_{v,\tv} = \rho (|u(\tv,t) - u(v,t)|)$.  The time continuous case models  continuous  committee deliberations.
The next lemma computes the evolution of quantities $G(u)$ depending on $u$.  For generality, we will consider maps
$G$ from $\cH$ to a second Hilbert space $\cH_0$ that are Frechet differentiable with derivative $dG$.
 
\begin{Lem}	\label{l:ucvxG}
If $u$ satisfies $\partial_t u = L_{\mu} u$ and $G:\cH \to \cH_0$ is differentiable, 
then{\footnote{The sum on the right hand side of \eqr{e:8p2} is over all $v$ and $\tilde v$ with $v\ne \tilde v$; we use this convention for the sum except when $v$ is already fixed.}}
\begin{align}	\label{e:8p2}
	  \partial_t \, \sum_v G(u(v,t)) &=  - \frac{1}{2d} \,
	 \sum_{\tv \ne v} \left( dG_{u(\tv,t)} - dG_{u(v,t)} \right) \, (u(\tv,t) - u(v,t)) \, \rho (|u(\tv,t) - u(v,t)|)
   \, .
\end{align}
\end{Lem}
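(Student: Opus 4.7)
The plan is a two-line computation: differentiate term by term using the chain rule, then symmetrize the resulting double sum.

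First I would apply the chain rule for Frechet derivatives and the defining equation $\partial_t u = L_\mu u$ to each summand, obtaining
\begin{align}
\partial_t \sum_v G(u(v,t)) = \sum_v dG_{u(v,t)} \bigl( L_\mu u (v,t) \bigr) = \frac{1}{d} \sum_{\tv \ne v} dG_{u(v,t)} \bigl( u(\tv,t) - u(v,t) \bigr) \, \rho(|u(\tv,t) - u(v,t)|).
\end{align}

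Next I would use that $\rho(|u(\tv,t)-u(v,t)|)$ is symmetric under interchange of $v$ and $\tv$. Relabeling in the double sum gives the identical expression
\begin{align}
\frac{1}{d} \sum_{\tv \ne v} dG_{u(\tv,t)} \bigl( u(v,t) - u(\tv,t) \bigr) \, \rho(|u(\tv,t) - u(v,t)|).
\end{align}
Averaging the two expressions and factoring out the common edge weight yields
\begin{align}
\partial_t \sum_v G(u(v,t)) = -\frac{1}{2d} \sum_{\tv \ne v} \bigl( dG_{u(\tv,t)} - dG_{u(v,t)} \bigr) \bigl( u(\tv,t) - u(v,t) \bigr) \, \rho(|u(\tv,t) - u(v,t)|),
\end{align}
which is exactly the claimed identity.

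There is no real obstacle here. The only subtlety to be careful about is that the symmetrization step requires the edge weight $\rho(|u(\tv,t)-u(v,t)|)$ to be invariant under swapping $v \leftrightarrow \tv$, which is automatic since $\rho$ is applied to the norm of the difference. No hypothesis on $dG$ beyond existence of the Frechet derivative is needed, which matches the generality of the statement.
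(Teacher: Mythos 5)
Your proof is correct and is essentially the same as the paper's: both compute $\partial_t \sum_v G(u(v,t))$ via the chain rule and then symmetrize the double sum using the invariance of $\rho(|u(\tv,t)-u(v,t)|)$ under $v \leftrightarrow \tv$. The paper phrases the symmetrization as ``add and subtract, then observe the extra term equals $-\partial_t \sum_v G$,'' while you phrase it as ``relabel and average,'' but the content is identical.
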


\begin{proof}
Since $u_t(v) = \frac{1}{d} \, \sum_{\tv \ne v} (u(\tv) - u(v)) \, \rho  (|u(\tv,t) - u(v,t)|)$, 
 we have
\begin{align}
	 \partial_t \, \sum_v G(u(v,t))  &=   \frac{1}{d} \, \sum_{\tv \ne v} dG_{u(v,t)} \, (u(\tv,t) - u(v,t)) \, \rho (|u(\tv,t) - u(v,t)|) \, .
\end{align}
Adding and subtracting $dG_{u(\tv,t)} \, (u(\tv,t) - u(v,t)) \, \rho (|u(\tv,t) - u(v,t)|)$ gives
\begin{align}
	  \partial_t \, \sum_v G(u(v))  &= -  \frac{1}{d} \,  \sum_{\tv \ne v} (dG_{u(\tv,t)} - dG_{u(v,t))}\, (u(\tv,t) - u(v,t)) \, \rho (|u(\tv,t) - u(v,t)|) \notag \\
	&\qquad +   \frac{1}{d} \,  \sum_{\tv \ne v} dG_{u(\tv,t)} \, (u(\tv,t) - u(v,t)) \, \rho (|u(\tv,t) - u(v,t)|) \, .
\end{align}
The   last line is minus $   \partial_t \, \sum_v G(u(v))$, so the lemma follows by adding this to each side.
\end{proof}

As a consequence, we see that the average of $u$ is preserved:

\begin{Cor}	\label{c:usquared}
If $u$ satisfies $\partial_t u = L_{\mu} u$, then $  \partial_t \, \cA_{u (\cdot , t)} = 0$ and 
\begin{align}
	  \partial_t \,  \| u - \cA_u \|_{2}^2 &= \partial_t \, \| u \|_2^2 = - 2\,  \|\nabla u(\cdot , t) \|_{2,\rho}^2  \, .
\end{align}
\end{Cor}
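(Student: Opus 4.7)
The plan is to derive both identities from Lemma \ref{l:ucvxG} by choosing $G$ cleverly, plus one elementary expansion to pass from $\|u\|_2^2$ to $\|u-\cA_u\|_2^2$.

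For the first claim $\partial_t \cA_{u(\cdot,t)}=0$, the quickest route is to apply Lemma \ref{l:ucvxG} with $G:\cH\to\cH$ equal to the identity map. Since $dG_x$ is then the identity linear map at every point $x$, the difference $dG_{u(\tv,t)}-dG_{u(v,t)}$ vanishes identically, so the right-hand side of \eqref{e:8p2} is zero. Hence $\sum_v u(v,t)$, and therefore $\cA_{u(\cdot,t)} = \frac{1}{d+1}\sum_v u(v,t)$, is constant in $t$. (Alternatively, one may invoke Lemma \ref{l:preserve} applied coordinatewise, since $\mu_{v,\tv}=\rho(|u(\tv,t)-u(v,t)|)$ is automatically symmetric in $(v,\tv)$.)

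For the energy identity, I take $G:\cH\to\RR$ given by $G(x)=|x|^2 = \langle x,x\rangle$, whose Frechet derivative is $dG_x(y)=2\langle x,y\rangle$. Substituting into \eqref{e:8p2} and simplifying,
\begin{align*}
\bigl(dG_{u(\tv,t)} - dG_{u(v,t)}\bigr)(u(\tv,t)-u(v,t)) = 2\langle u(\tv,t)-u(v,t),\, u(\tv,t)-u(v,t)\rangle = 2|u(\tv,t)-u(v,t)|^2.
\end{align*}
So Lemma \ref{l:ucvxG} gives
\begin{align*}
\partial_t \|u\|_2^2 \;=\; \partial_t \sum_v |u(v,t)|^2 \;=\; -\frac{1}{d}\sum_{\tv\ne v} |u(\tv,t)-u(v,t)|^2\,\rho(|u(\tv,t)-u(v,t)|).
\end{align*}
The ordered sum double-counts unoriented edges, so it equals $2\sum_{(\tv,v)\in\cE}$, giving $-\frac{2}{d}\sum_{(\tv,v)\in\cE}|u(\tv,t)-u(v,t)|^2\rho(\cdots) = -2\|\nabla u(\cdot,t)\|_{2,\rho}^2$, as claimed.

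Finally, to replace $\|u\|_2^2$ by $\|u-\cA_u\|_2^2$, I expand
\begin{align*}
\|u-\cA_u\|_2^2 \;=\; \sum_v |u(v,t)|^2 - 2\Bigl\langle \sum_v u(v,t),\, \cA_{u(\cdot,t)}\Bigr\rangle + (d+1)|\cA_{u(\cdot,t)}|^2 \;=\; \|u\|_2^2 - (d+1)|\cA_{u(\cdot,t)}|^2,
\end{align*}
using $\sum_v u(v,t) = (d+1)\cA_{u(\cdot,t)}$. By the first part of the corollary, $\cA_{u(\cdot,t)}$ is $t$-independent, so differentiating in $t$ kills the second term and leaves $\partial_t\|u-\cA_u\|_2^2 = \partial_t\|u\|_2^2$. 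No real obstacle is expected; the only place to be careful is the factor of two arising from the conversion between the ordered sum in Lemma \ref{l:ucvxG} and the unordered-edge sum used in the definition of $\|\nabla u\|_{2,\rho}^2$.
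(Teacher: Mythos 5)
Your proof is correct and follows essentially the same route as the paper's: both apply Lemma \ref{l:ucvxG} with $G$ the identity to get preservation of the average, and then with $G(x)=|x|^2$ to get the energy identity. The only difference is that you spell out the elementary expansion $\|u-\cA_u\|_2^2=\|u\|_2^2-(d+1)|\cA_u|^2$, which the paper leaves implicit.
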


\begin{proof}
The first claim follows from  Lemma \ref{l:ucvxG} with $G(s) = s$, so that $dG_s$ is the identity.

The first equality in the second claim follows from the first claim.
Applying Lemma \ref{l:ucvxG} with $G(s) = |s|^2$, so that $dG_s (\cdot) = 2\langle s , \cdot \rangle$, gives
\begin{align}
	   \partial_t \, \| u \|_2^2 &=  \partial_t \, \sum_v |u(v,t)|^2  = -  \frac{1}{d} \,  \sum_{\tv \ne v}  |u(\tv,t) - u(v,t)|^2 \, \rho (|u(\tv,t) - u(v,t)|) \, ,
\end{align}
giving the second equality and completing the proof.
\end{proof}

\subsection{Entropies in the continuous case}

We now specialize to the case where $u$ is real-valued and $G: \RR \to \RR$.

\begin{Cor}	\label{c:gmini}
If $u$ satisfies $\partial_t u = L_{\mu} u$ and $G:\RR \to \RR$, then
\begin{align}
	  \partial_t \, \sum_v G(u(v,t)) &\leq  - \frac{1}{2d} \,
	 \sum_{\tv \ne v}  (u(\tv,t) - u(v,t))^2 \, \rho (|u(\tv,t) - u(v,t)|) \, \min_s G''(s)
   \, ,
\end{align}
where  the minimum is over $s$  between $ u(\tv,t)$ and $u(v,t) $.  
In particular, we have
\begin{align}
	  \partial_t \, \sum_v G(u(v,t)) &\leq  -  \min G''  \,
	\| \nabla u \|_{2,\rho}^2
   \, .
\end{align}
\end{Cor}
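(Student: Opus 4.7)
The plan is to specialize Lemma \ref{l:ucvxG} to the scalar case and then apply the mean value theorem to extract a $G''$ factor. Since $u$ and $G$ are both real-valued, the Frechet derivative $dG_s$ is just multiplication by $G'(s)$, so the identity from Lemma \ref{l:ucvxG} becomes
\begin{align*}
	\partial_t \sum_v G(u(v,t)) = -\frac{1}{2d} \sum_{\tv \ne v} \bigl(G'(u(\tv,t)) - G'(u(v,t))\bigr) \bigl(u(\tv,t) - u(v,t)\bigr) \, \rho(|u(\tv,t) - u(v,t)|) \, .
\end{align*}

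Next, for each ordered pair $(v,\tv)$ with $v \ne \tv$, the mean value theorem applied to $G'$ yields some $\xi$ between $u(v,t)$ and $u(\tv,t)$ with $G'(u(\tv,t)) - G'(u(v,t)) = G''(\xi) \, (u(\tv,t) - u(v,t))$. Consequently,
\begin{align*}
	\bigl(G'(u(\tv,t)) - G'(u(v,t))\bigr) \bigl(u(\tv,t) - u(v,t)\bigr) = G''(\xi) \, (u(\tv,t) - u(v,t))^2 \geq \min_s G''(s) \, (u(\tv,t) - u(v,t))^2 \, ,
\end{align*}
where the minimum runs over $s$ between $u(v,t)$ and $u(\tv,t)$. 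The weight $\rho(|u(\tv,t) - u(v,t)|)$ is nonnegative, so multiplying by it preserves the inequality; summing and inserting the overall minus sign from Lemma \ref{l:ucvxG} flips the direction and gives the first conclusion.

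For the second conclusion, I would simply replace the pointwise minimum of $G''$ over the interval between $u(\tv,t)$ and $u(v,t)$ by the global minimum $\min G''$, which is no larger, and recognize that $\frac{1}{2d} \sum_{\tv \ne v} (u(\tv,t) - u(v,t))^2 \rho(|u(\tv,t) - u(v,t)|) = \|\nabla u\|_{2,\rho}^2$ by the definition of the $\rho$-weighted energy and the identification of the ordered sum with twice the sum over $\cE$. There is no real obstacle here; the only mild subtlety to flag is that $\min_s G''(s)$ may be negative, in which case both bounds remain valid but become vacuous rather than informative, so the estimate is genuinely useful only when $G$ is (locally or globally) convex.
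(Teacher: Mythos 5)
Your proof is correct and follows essentially the same route as the paper: specialize Lemma \ref{l:ucvxG} to the scalar case, apply the mean value theorem to $G'$ to extract a $G''$ factor, bound it below by the (local, then global) minimum, and identify the resulting sum with $\|\nabla u\|_{2,\rho}^2$. The closing remark about the inequality being vacuous when $\min G''<0$ is a sensible observation beyond what the paper states.
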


\begin{proof}
Given $\tv, v, t$, the mean value theorem gives
\begin{align}
	G'(u(\tv,t)) - G'(u(v,t)) = (u(\tv,t) - u(v,t))\, G'' (s) \, , 
\end{align}
where $s$ is some value between $u(\tv,t)$ and $u(v,t)$.  It follows that
\begin{align}
	(G'(u(\tv,t)) - G'(u(v,t)))\, (u(\tv,t) - u(v,t))&= (u(\tv,t) - u(v,t))^2 \,   G''(s)\notag \\
	 &\geq (u(\tv,t) - u(v,t))^2 \, \min_{\tau} G''(\tau) \, ,
\end{align}
where the minimum is taken over $\min \{ u(\tv,t) , u(v,t)\}  \leq \tau \leq \max \{ u(\tv,t) , u(v,t)\} $.  
  Lemma \ref{l:ucvxG} gives
\begin{align}
	  \partial_t \, \sum_v G(u(v,t)) &\leq  - \frac{1}{2d} \,
	 \sum_{\tv \ne v}  (u(\tv,t) - u(v,t))^2 \, \rho (|u(\tv,t) - u(v,t)|) \, \min_s G''(s)
   \, ,
\end{align}
giving the corollary.
\end{proof}

Similarly, we get monotonicity of the entropy $\Entropy (t) \equiv \Entropy (u(\cdot , t))$:

\begin{Cor}
If $u>0$ satisfies $\partial_t u = L_{\mu} u$, then
\begin{align}	\label{e:entrml}
	 \Entropy'(t) &=    \frac{1}{2d} \, \sum_{\tv \ne v}  \log \frac{ u(\tv,t)}{u(v,t)} \,  (u(\tv,t) - u(v,t)) \, \rho (|u(\tv,t) - u(v,t)|) \, .
\end{align}
\end{Cor}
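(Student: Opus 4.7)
\smallskip
\noindent\textbf{Proof proposal.} The plan is to apply Lemma \ref{l:ucvxG} directly with the choice $G(s) = -s \log s$, viewed as a map $\RR \to \RR$. Since by definition $\Entropy(u(\cdot,t)) = \sum_v G(u(v,t))$, the entropy's time derivative is exactly what the lemma computes.

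Concretely, I would first record that $G'(s) = -\log s - 1$, so for any two positive numbers $a = u(\tv,t)$ and $b = u(v,t)$ one has
\begin{equation}
(G'(a) - G'(b))(a - b) = -(\log a - \log b)(a-b) = -\log \frac{u(\tv,t)}{u(v,t)}\,(u(\tv,t) - u(v,t)).
\end{equation}
Next, I would substitute this identity into the right hand side of \eqr{e:8p2} in Lemma \ref{l:ucvxG}. The two minus signs cancel, yielding precisely the claimed formula \eqr{e:entrml}. The positivity hypothesis $u > 0$ ensures that $\log u(v,t)$ is defined at every vertex at time $t$, which is all that is needed for $G$ to be differentiable at the values being plugged in; by Proposition \ref{p:maxprin} applied to $u$, positivity is preserved in time, so the computation is justified for all $t \geq 0$.

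There is no real obstacle here: the only thing to be careful about is matching the sign conventions, since $\Entropy$ has a leading minus sign while Lemma \ref{l:ucvxG} is stated for $\sum_v G(u(v,t))$ without such a sign, and the factor of $-\tfrac{1}{2d}$ in \eqr{e:8p2} must be tracked. As a brief sanity check, the resulting integrand $\log(a/b)(a-b)$ is always nonnegative (since $\log$ is increasing), and $\rho \geq 0$, so $\Entropy'(t) \geq 0$. This is consistent with the discrete-time monotonicity in Proposition \ref{p:entropymono}, confirming the sign.
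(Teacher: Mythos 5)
Your proof is correct and takes essentially the same route as the paper: both apply Lemma \ref{l:ucvxG} with $G(s) = -s\log s$, at which point the constant $-1$ in $G'(s) = -\log s - 1$ drops out of the difference $G'(u(\tv,t))-G'(u(v,t))$ and the two minus signs cancel to yield \eqr{e:entrml}. The paper's aside about $\sum_v u_t(v)=0$ is just an alternative way to dispose of that constant; your computation handles it automatically, and the sanity check against Proposition \ref{p:entropymono} is a nice addition.
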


\begin{proof}
This follows by applying Lemma \ref{l:ucvxG} with $G(s) = -s \log s$, so that $G'(s) = -1 - \log s$ and noting that $\sum_v u_t (v) = 0$.
\end{proof}

\begin{Cor}
If $u>0$ satisfies $\partial_t u = L_{\mu} u$, then
\begin{align}	\label{e:entrmlc}
\Entropy '(t) \geq \frac{1}{2d} \, \sum_{\tv \ne v} 
\frac{( u(\tv,t) - u(v,t))^2 \, \rho (|u(\tv,t) - u(v,t)|)} 
{ \max \{ u(\tv,t) , u(v,t)\} }
\, .
\end{align}
\end{Cor}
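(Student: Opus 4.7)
The plan is to start from the previous corollary, which gives
\begin{align}
\Entropy'(t) = \frac{1}{2d} \sum_{\tv \ne v} \log \frac{u(\tv,t)}{u(v,t)} \, (u(\tv,t) - u(v,t)) \, \rho(|u(\tv,t) - u(v,t)|) \, ,
\end{align}
and then to estimate each summand from below by the corresponding summand in \eqr{e:entrmlc}. Since $\rho \geq 0$ and the factor $\log \frac{u(\tv,t)}{u(v,t)} \, (u(\tv,t) - u(v,t))$ is symmetric in $v,\tv$, the reduction is to the following pointwise claim: for all $a,b > 0$,
\begin{align}
(\log a - \log b)\,(a-b) \geq \frac{(a-b)^2}{\max\{a,b\}} \, .
\end{align}

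The main (and essentially only) step is this elementary one-variable inequality. By symmetry I can assume $a \geq b > 0$, so that both sides vanish when $a=b$ and otherwise I can divide by $(a-b) > 0$ to reduce to
\begin{align}
\log a - \log b \geq \frac{a-b}{a} = 1 - \frac{b}{a} \, .
\end{align}
Setting $r = b/a \in (0,1]$, this is $-\log r \geq 1 - r$, equivalently $\log r \leq r - 1$, which is the standard concavity inequality for the logarithm (the tangent to $\log$ at $r=1$ lies above the graph). Alternatively, one can verify it by noting $\log a - \log b = \int_b^a \frac{ds}{s} \geq \frac{a-b}{a}$, since the integrand is at least $1/a$ on $[b,a]$.

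The hard part, if there is one, is just recognizing the right elementary inequality to use; the rest is bookkeeping. Having established the pointwise bound, I substitute it termwise into the identity for $\Entropy'(t)$ above, use the nonnegativity of $\rho$ to preserve the inequality under the sum, and obtain \eqr{e:entrmlc} directly. Note that the resulting lower bound is manifestly nonnegative, so as a byproduct this reproves the monotonicity $\Entropy'(t) \geq 0$ in the continuous case.
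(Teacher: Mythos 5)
Your proof is correct and takes essentially the same route as the paper: the paper derives \eqr{e:entrmlc} by applying its general convexity estimate (Corollary \ref{c:gmini}) to $G(s)=s\log s$, for which $G''(s)=1/s\geq 1/\max\{u(\tv,t),u(v,t)\}$ on the relevant interval, whereas you start from the preceding entropy identity and prove the identical pointwise bound $(\log a-\log b)(a-b)\geq (a-b)^2/\max\{a,b\}$ directly via $\log r\leq r-1$. The underlying inequality and the mechanism are the same, so this is the same argument presented a bit more concretely.
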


\begin{proof}
Applying Corollary \ref{c:gmini} with $G(s) = s \log s$, so that 
$G''(s) = \frac{1}{s}$, we have
\begin{align}
\partial_t \, \sum_v (u(v,t))\, \log u(v,t) &\leq - \frac{1}{2d} \,
\sum_{\tv \ne v} \, \frac{(u(\tv,t) - u(v,t))^2 \, \rho (|u(\tv,t) 
- u(v,t)|)}{\max \{ u(\tv,t) , u(v,t)\}}
\, .
\end{align}
Multiplying by $-1$ gives the corollary.
\end{proof}

Similarly, we get monotonicity for the Renyi entropies $R_{\alpha}$ 
(cf. Proposition \ref{p:renyi}).

\begin{Cor}
If $u>0$ satisfies $\partial_t u = L_{\mu} u$ and $\alpha > 0, \alpha \ne 1$, then
\begin{align}	\label{e:ryi}
\partial_t &\, \exp{\{ (1-\alpha)R_{\alpha} (t)\}}\notag\\
=&
- \frac{\alpha}{2d} \,
\sum_{\tv \ne v} ((u(\tv,t))^{\alpha-1} - (u(v,t))^{\alpha -1})\, 
(u(\tv,t) - u(v,t)) \, \rho (|u(\tv,t) - u(v,t)|)
\, .
\end{align}
\end{Cor}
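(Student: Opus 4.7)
The plan is a direct application of Lemma \ref{l:ucvxG} to the function $G(s) = s^\alpha$.

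First I would unwind the definition of $R_\alpha$. By definition,
\begin{align}
R_\alpha(t) = \frac{1}{1-\alpha} \log \sum_{v \in \Gamma} u^\alpha(v,t) \, ,
\end{align}
so exponentiating gives the clean identity
\begin{align}
\exp\{(1-\alpha) R_\alpha(t)\} = \sum_{v \in \Gamma} u^\alpha(v,t) \, .
\end{align}
Thus computing $\partial_t \exp\{(1-\alpha) R_\alpha(t)\}$ reduces to computing the time derivative of $\sum_v G(u(v,t))$ for $G(s) = s^\alpha$. Since $u > 0$, this $G$ is differentiable on the range of $u$, so Lemma \ref{l:ucvxG} applies.

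Next I would plug $G(s) = s^\alpha$ (with $G'(s) = \alpha s^{\alpha-1}$) into the conclusion of Lemma \ref{l:ucvxG}. Since $\cH_0 = \RR$ here, the Frechet derivative $dG_s$ acts on a real number $h$ by $dG_s(h) = \alpha s^{\alpha-1} h$, so
\begin{align}
(dG_{u(\tv,t)} - dG_{u(v,t)})(u(\tv,t) - u(v,t)) = \alpha \bigl(u(\tv,t)^{\alpha-1} - u(v,t)^{\alpha-1}\bigr)(u(\tv,t) - u(v,t)) \, .
\end{align}
Substituting this into \eqr{e:8p2} yields exactly \eqr{e:ryi}.

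There is essentially no obstacle: the only thing to be mildly careful about is that the formula in Lemma \ref{l:ucvxG} was stated for $G$ mapping into a Hilbert space $\cH_0$, and here we are using $\cH_0 = \RR$ with $\cH = \RR$ as well, which is the scalar special case. The positivity assumption $u > 0$ ensures that $s \mapsto s^\alpha$ is smooth along the trajectory, so the chain rule used implicitly in Lemma \ref{l:ucvxG} is valid throughout.
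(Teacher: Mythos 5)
Your proof is correct and follows exactly the paper's route: identify $\exp\{(1-\alpha)R_\alpha(t)\} = \sum_v u^\alpha(v,t)$ and apply Lemma \ref{l:ucvxG} with $G(s) = s^\alpha$, $G'(s) = \alpha s^{\alpha-1}$. No discrepancies.
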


\begin{proof}
If we set $G_{\alpha} (s) = s^{\alpha}$, then $G_{\alpha}'(s) = 
\alpha \, s^{\alpha -1}$ and
\begin{align}
\exp{\{ (1-\alpha)R_{\alpha} (t)\}} = \sum_{v \in \Gamma} G_{\alpha} 
(u(v,t)) \, .
\end{align}
The corollary now follows from
Lemma \ref{l:ucvxG}.
\end{proof}

When $\alpha > 1$, $s^{\alpha -1}$ is nondecreasing and   \eqr{e:ryi}  is nonpositive; the opposite holds when $\alpha < 1$.  The quantity $R_{\alpha}(t)$ is nondecreasing in either case.

  \section{Decay of the variance}	\label{s:variance}
  
 For a map $u:\Gamma\to \cH$, the variance is given by
\begin{align}
\Var_u=\frac{1}{d+1}\, \sum_{v}|u(v)-\cA_u|^2 = \frac{1}{d+1} \,  \|u\|^2_2  - \left| \cA_u \right|^2 \, .
\end{align}
 We will assume that $u: \Gamma \times \RR \to \RR$ satisfies $\partial_t u = L_{\mu} u$ with $\mu = \rho$.  
 Corollary \ref{c:usquared}
gives
\begin{align}
	   \partial_t \,  \| u - \cA_u \|_{2}^2 &= \partial_t \, \| u \|_2^2 = - 2\,  \|\nabla u(\cdot , t) \|_{2,\rho}^2    \, .
\end{align}

 The next theorem proves decay for the variance when $\rho$ is bounded from below by a negative power.  This power condition is natural in many cases; cf. Section \ref{s:nbody} where $\rho$ is exactly a negative power in the $n$-body problem.   The power condition is not natural in the problem of committee rankings where $\rho$ is always assumed to be at most one; the second theorem below will deal with this case.
 As the power goes to zero in the next theorem, we recover the exponential decay of the variance for the ordinary heat equation.

\begin{Thm}	\label{p:vardecay}
If $\rho (s) \geq s^{-\alpha}$ for   $0\leq \alpha < 2$, then as long as $u$ is not constant
	\begin{align}
	 \left(  \Var_{u(\cdot , t)}^{  \frac{\alpha}{2} }  \right)_t \leq - \, 
	 \begin{cases}
	   \frac{ \alpha  \, (d+1)^{ \frac{4-3\alpha}{2} } }{2d^{2-\alpha} }    {\text{ if }} 0< \alpha \leq 1  \, , \\
	    \frac{ \alpha  \, (d+1)^{ \frac{4-3\alpha}{2} } }{2d }  {\text{ if }} 1< \alpha < 2  \, .\\
	   \end{cases}
\end{align}
In the last case $\alpha =0$,  we have $\left( \log \Var_{u(\cdot , t)} \right)_t \leq -    \frac{(d+1)^2}{d^2} $.
\end{Thm}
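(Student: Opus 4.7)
The plan is to combine the derivative identity for the variance from Corollary \ref{c:usquared} with the weighted Poincar\'e inequality from Proposition \ref{l:nonpoin}, and then make the change of variable $\Var \to \Var^{\alpha/2}$ so that the differential inequality becomes linear with a dimensional constant on the right.

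\textbf{Step 1 (Derivative of the variance).} From Corollary \ref{c:usquared} and the identity $\Var_{u(\cdot,t)} = \frac{1}{d+1}\, \| u(\cdot,t) - \cA_{u(\cdot,t)} \|_2^2$, I immediately get
\begin{align}
	\partial_t \, \Var_{u(\cdot,t)} = - \frac{2}{d+1} \, \| \nabla u(\cdot , t) \|_{2,\rho}^2 \, .
\end{align}

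\textbf{Step 2 (Turn Poincar\'e into a closed differential inequality for $\Var$).} By monotonicity of $\rho$, I may replace $\rho$ by $s^{-\alpha}$, so Proposition \ref{l:nonpoin} applies. Rewriting the Poincar\'e inequality in terms of $\Var$, using $\|u - \cA_u\|_2^{2-\alpha} = (d+1)^{(2-\alpha)/2}\, \Var_{u}^{(2-\alpha)/2}$, yields a bound of the shape
\begin{align}
	\| \nabla u \|_{2,\rho}^2 \, \geq \, c_{\alpha,d} \, (d+1)^{(2-\alpha)/2} \, \Var_u^{(2-\alpha)/2} \, ,
\end{align}
where $c_{\alpha,d}$ is $\tfrac{1}{2}((d+1)/d)^{2-\alpha}$ in the range $0 \leq \alpha \leq 1$ and $\tfrac{(d+1)^{2-\alpha}}{2d}$ in the range $1<\alpha<2$. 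Feeding this back into Step 1 gives
\begin{align}
	\partial_t \, \Var_{u(\cdot,t)} \, \leq \, - \, C_{\alpha,d} \, \Var_{u(\cdot,t)}^{(2-\alpha)/2} \, ,
\end{align}
with $C_{\alpha,d}= (d+1)^{(4-3\alpha)/2}/d^{2-\alpha}$ for $0\leq \alpha\leq 1$ and $C_{\alpha,d} = (d+1)^{(4-3\alpha)/2}/d$ for $1<\alpha<2$, after collecting the powers of $(d+1)$ (this bookkeeping is the one spot where I will have to be careful).

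\textbf{Step 3 (Change of variable).} For $\alpha>0$, the chain rule applied to $F(t) = \Var_{u(\cdot,t)}^{\alpha/2}$ gives
\begin{align}
	F'(t) = \frac{\alpha}{2}\, \Var_{u(\cdot,t)}^{\alpha/2 - 1} \, \partial_t \Var_{u(\cdot,t)} \, \leq \, - \, \frac{\alpha}{2}\, C_{\alpha,d} \, \Var_{u(\cdot,t)}^{\alpha/2 - 1 + (2-\alpha)/2} \, = \, - \, \frac{\alpha}{2}\, C_{\alpha,d} \, ,
\end{align}
because the exponents combine to zero. This is exactly the two stated bounds. For $\alpha=0$, the analogous computation applies to $\log \Var$: Step 2 with $\alpha=0$ reads $\partial_t \Var \leq -\tfrac{(d+1)^2}{d^2}\, \Var$, and dividing by $\Var$ gives the stated bound on $(\log \Var)_t$.

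\textbf{Main obstacle.} The substance of the proof is already contained in Proposition \ref{l:nonpoin} and Corollary \ref{c:usquared}; the only real task is the arithmetic of exponents. Concretely, one must verify that the exponent of $\Var$ in the product $\Var^{\alpha/2-1} \cdot \Var^{(2-\alpha)/2}$ cancels and that the surviving dimensional constants $(d+1)^{(4-3\alpha)/2}/d^{2-\alpha}$ (resp.\ over $d$) come out as claimed — routine but the place where a sign or exponent slip would matter. No delicate estimate is needed beyond what is already proved.
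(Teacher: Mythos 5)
Your proof is correct and follows essentially the same route as the paper: both apply Corollary \ref{c:usquared} to express $\partial_t \Var$ via the weighted energy, invoke Proposition \ref{l:nonpoin}, and change variables so that $\Var^{\alpha/2}$ has constant derivative; the only cosmetic difference is that the paper works with $f=(d+1)\Var$ to streamline the bookkeeping, whereas you track the powers of $(d+1)$ directly, and both arrive at the same constants.
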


\begin{proof}
Set $f(t) = (d+1) \, \Var_{u(\cdot , t)}$ and $g(t) = \| \nabla u(\cdot , t) \|_{2,\rho}^2 $.
If $0 \leq \alpha \leq 1$, then 
Proposition \ref{l:nonpoin} gives
 \begin{align}
 	f^{\frac{2-\alpha}{2}} 
		 \leq  2\, \left( \frac{d}{d+1}  \right)^{2-\alpha} \,  g =  \left( \frac{d}{d+1}  \right)^{2-\alpha} \,  (-f')
		 \, .   \label{e:pnoig}
 \end{align}
 The case $\alpha =0$ follows immediately.
Similarly, when    $1< \alpha <2$,
 Proposition \ref{l:nonpoin} gives
 \begin{align}
 	f^{\frac{2-\alpha}{2}} 
		 \leq    \left( \frac{d}{(d+1)^{2-\alpha}}  \right) \,  (-f')
		 \, .    
 \end{align}
   When $\alpha \ne 0$, the theorem follows from this since
 \begin{align}
 	\frac{2}{\alpha} \, \left( f^{\frac{\alpha}{2}} \right)' = \frac{f'}{ f^{ \frac{2-\alpha}{2} } } \, .
 \end{align}

\end{proof}

As a consequence, we see that $u$ must become constant in finite time when $0 < \alpha < 2$.

\begin{Cor}	\label{c:vardecay}
If $\rho (s) \geq s^{-\alpha}$ for   $0< \alpha < 2$ and  $u(\cdot ,t)$ is not constant for some $t>0$, then
\begin{align}
	\Var_{ u (\cdot , t)}^{\frac{\alpha}{2}}  \leq \Var_{ u (\cdot , 0)}^{\frac{\alpha}{2}} - c_{d,\alpha} \, t \, ,
\end{align}
where 
\begin{align}
c_{d , \alpha} = 
 \begin{cases}
	   \frac{ \alpha  \, (d+1)^{ \frac{4-3\alpha}{2} } }{2d^{2-\alpha} }    {\text{ if }} 0< \alpha \leq 1  \, , \\
	    \frac{ \alpha  \, (d+1)^{ \frac{4-3\alpha}{2} } }{2d }  {\text{ if }} 1< \alpha < 2  \, .\\
	   \end{cases}
	   \end{align}
\end{Cor}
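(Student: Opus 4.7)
The plan is to reduce the corollary to integrating the differential inequality supplied by Theorem \ref{p:vardecay}. The only subtle point is that Theorem \ref{p:vardecay} only yields its conclusion at times $s$ where $u(\cdot,s)$ is not constant, so to integrate from $0$ to $t$ I must verify that non-constancy at time $t$ propagates backward and holds throughout $[0,t]$.

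First I would appeal to Corollary \ref{c:usquared}, which gives
\[
	\partial_s \, \|u(\cdot,s) - \cA_{u(\cdot,s)}\|_2^2 = -2\,\|\nabla u(\cdot,s)\|_{2,\rho}^2 \leq 0.
\]
Since $\Var_{u(\cdot,s)} = \tfrac{1}{d+1}\,\|u(\cdot,s) - \cA_{u(\cdot,s)}\|_2^2$, this shows the variance is nonincreasing in time. Hence, if $u(\cdot,t)$ is not constant, so that $\Var_{u(\cdot,t)} > 0$, then $\Var_{u(\cdot,s)} \geq \Var_{u(\cdot,t)} > 0$ for every $s \in [0,t]$, and in particular $u(\cdot,s)$ is not constant throughout the interval.

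With this in hand, Theorem \ref{p:vardecay} applies at every $s \in [0,t]$, and since $0 < \alpha < 2$ falls into one of its two cases, it yields
\[
	\frac{d}{ds}\,\Var_{u(\cdot,s)}^{\alpha/2} \leq -c_{d,\alpha}
\]
with the constant $c_{d,\alpha}$ specified in the corollary. Integrating this differential inequality from $0$ to $t$ by the fundamental theorem of calculus produces the claimed bound
\[
	\Var_{u(\cdot,t)}^{\alpha/2} \leq \Var_{u(\cdot,0)}^{\alpha/2} - c_{d,\alpha}\,t.
\]

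There is no real obstacle here: the content lies in Theorem \ref{p:vardecay} (and hence in the weighted Poincaré inequality from Proposition \ref{l:nonpoin}), while the corollary itself is a one-line integration, with the monotonicity of the variance from Corollary \ref{c:usquared} serving only to guarantee that the hypothesis of the theorem persists on the whole interval.
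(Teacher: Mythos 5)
Your proof is correct and follows the paper's approach exactly: the paper's own proof is simply ``integrate Theorem \ref{p:vardecay}.'' Your additional verification---that non-constancy at time $t$ propagates backward to all of $[0,t]$ via the monotonicity of the variance from Corollary \ref{c:usquared}---is a small detail the paper leaves implicit, and you have filled it in correctly.
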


\begin{proof}
This follows by integrating Theorem \ref{p:vardecay}.
\end{proof}

Corollary \ref{c:vardecay} bounds the time to consensus depending on $d$, $\alpha$ and the initial variance.  When $d$ is large but fixed and there are a few outliers, this bound is stronger than the one from the exponential decay of $\| \nabla u \|_{\infty}$.

We turn next to the case where $\rho (s) \geq c \, s^{-\alpha}$ for $s \geq 1$, where $\alpha , c > 0$.   The next theorem shows that the variance goes to zero in this case as well.  The variance to the power $\frac{\alpha}{2}$ decreases a definite amount until the variance gets below one; from then on, it decays exponentially.

\begin{Thm}	\label{p:vardecay2}
If $\rho (s) \geq c\, s^{-\alpha}$ for   $s\geq 1$, where $0\leq \alpha < 2$, then  
	\begin{itemize}
	 \item
	 $ \left(  \Var_{u(\cdot , t)}^{  \frac{\alpha}{2} }  \right)_t \leq -
	c_{c,d,\alpha}   {\text{ if }}   1 \leq \Var_{u(\cdot , t)}$.
	 \item  $\left( \log \Var_{u(\cdot , t)} \right)_t \leq
	 -
	c_{c,d,\alpha}   {\text{ if }}   \Var_{u(\cdot , t)} < 1$.
	\end{itemize}
Here the constant $c_{c,d,\alpha} $ depends on $c$, $d$ and $\alpha$.
\end{Thm}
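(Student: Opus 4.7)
The strategy mirrors the proof of Theorem \ref{p:vardecay}, but uses Corollary \ref{c:poin1} in place of Proposition \ref{l:nonpoin}. Set
\[
f(t) = \|u(\cdot,t) - \cA_{u(\cdot,t)}\|_2^2 = (d+1)\,\Var_{u(\cdot,t)}, \qquad g(t) = \|\nabla u(\cdot,t)\|_{2,\rho}^2 \, .
\]
Corollary \ref{c:usquared} gives $f' = -2g$, while Corollary \ref{c:poin1} (applied with the same exponent $\alpha$) yields the two-term Poincar\'e inequality
\[
f^{(2-\alpha)/2} \leq C_{c,d,\alpha}\, \bigl( g + g^{(2-\alpha)/2} \bigr) \, .
\]
The plan is to split according to whether $g \geq 1$ or $g < 1$, since one or the other term on the right dominates in each regime.

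When $g \geq 1$, the exponent $(2-\alpha)/2 \leq 1$ gives $g^{(2-\alpha)/2} \leq g$, so the inequality collapses to $f^{(2-\alpha)/2} \leq 2 C_{c,d,\alpha}\, g = -C_{c,d,\alpha}\, f'$. Multiplying both sides by $f^{(\alpha-2)/2}>0$ and using $\bigl( f^{\alpha/2} \bigr)_t = \tfrac{\alpha}{2}\, f^{(\alpha-2)/2}\, f'$ yields
\[
\bigl( f^{\alpha/2} \bigr)_t \leq - \frac{\alpha}{2\,C_{c,d,\alpha}} \, .
\]
When $g < 1$, the reverse inequality $g \leq g^{(2-\alpha)/2}$ holds, so $f^{(2-\alpha)/2} \leq 2 C_{c,d,\alpha}\, g^{(2-\alpha)/2}$, from which $f \leq (2C_{c,d,\alpha})^{2/(2-\alpha)}\, g = -c_1^{-1}\, f'$ for a positive constant $c_1$ depending on $c$, $d$, $\alpha$, and hence $(\log f)_t \leq -c_1$.

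To match the two regimes in the theorem statement, I would convert each conclusion into the other using the size of $f$. When $\Var_{u(\cdot,t)} \geq 1$, so that $f \geq d+1 \geq 1$, the logarithmic bound from the second case upgrades via $\bigl( f^{\alpha/2} \bigr)_t = \tfrac{\alpha}{2}\, f^{\alpha/2}\, (\log f)_t \leq -\tfrac{\alpha}{2}\, c_1$ using $f^{\alpha/2} \geq 1$; thus both cases give a uniform negative upper bound on $\bigl( f^{\alpha/2} \bigr)_t$. When $\Var_{u(\cdot,t)} < 1$, so $f < d+1$, the algebraic bound from the first case downgrades via $(\log f)_t = f'/f \leq -f^{-\alpha/2}/C_{c,d,\alpha} \leq -(d+1)^{-\alpha/2}/C_{c,d,\alpha}$, so that both cases give a uniform negative upper bound on $(\log f)_t$. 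Taking $c_{c,d,\alpha}$ to be the minimum of the four constants produced above, and passing from $f$ to $\Var$ via $\Var^{\alpha/2} = f^{\alpha/2}/(d+1)^{\alpha/2}$ and $\log \Var = \log f - \log(d+1)$, finishes the proof.

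The main obstacle is essentially bookkeeping rather than analytic substance: Corollary \ref{c:poin1} has already done the hard work, and the remaining content is to recognize that $g + g^{(2-\alpha)/2}$ is comparable to $2\,\max\{g, g^{(2-\alpha)/2}\}$, and then to interchange the two resulting ODE inequalities using whichever of $\Var \geq 1$ or $\Var < 1$ holds.
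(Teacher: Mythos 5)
Your proof is correct and uses the same key ingredients as the paper (Corollary \ref{c:poin1} together with the observation that $g + g^{(2-\alpha)/2}$ is controlled by whichever term dominates). The only organizational difference is that you split on the size of $g=\|\nabla u\|_{2,\rho}^2$ and then cross-convert, whereas the paper applies the comparison directly to $f=(d+1)\Var_u$ (in the spirit of its Lemma \ref{l:eleme}) and leaves the matching of thresholds implicit; your version is a bit more explicit about that last bookkeeping step but is not a genuinely different route.
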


\begin{proof}
Set $f(t) = (d+1) \, \Var_{u(\cdot , t)}$.
Corollary \ref{c:poin1} gives a differential inequality of the type 
\begin{align}
	f^{ \frac{2-\alpha}{2} } \leq C \, \left(|f_t| + |f_t|^{ \frac{ 2- \alpha}{2} } \right) \, ,
\end{align}
where $f_t \leq 0$ and $C$ depends on $c$, $d$ and $\alpha$.  For $f \geq 1$, this leads to a bound
\begin{align}
	f^{ \frac{2-\alpha}{2} } \leq - C' \, f_t \, ,
\end{align}
while for $f\leq 1$ it gives
\begin{align}
	f \leq - \bar{C} \, f_t \, .
\end{align}
The theorem follows from this.
\end{proof}

\section{Monotonicity of weighted energy}	\label{s:monoE}

Let $u: \Gamma \times \RR \to \cH$ satisfy $\partial_t u = L_{\mu} u$.
Given a function $\rho (s)$, define a function $\sigma(s)$ by
\begin{align}	\label{e:sigma2}
	 \sigma (s) = \frac{ 2 \, \int_0^s\tau \, \rho (\tau)\,d\tau }{s^2} \, .
\end{align}
It follows that $\left( s^2 \, \sigma \right)' = 2 \, s \, \rho$ and, thus, that for $s> 0$
\begin{align}	\label{e:siga}
	\sigma (s) + \frac{s}{2} \sigma'(s) = \rho (s)  \, .
\end{align}

\begin{Pro}	\label{p:sig}
If   $\sigma$ satisfies \eqr{e:siga}, then
\begin{align}
	 \partial_t \, \|\nabla u (\cdot ,t) \|_{2,\sigma}^2 & =  - 2\, \| u_t (\cdot , t) \|_2^2   	  \, .
\end{align}
\end{Pro}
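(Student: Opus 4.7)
The plan is to introduce the antiderivative $F(s)=s^2\sigma(s)$, so that the energy is expressed as
\begin{align*}
\|\nabla u(\cdot,t)\|_{2,\sigma}^2=\frac{1}{d}\sum_{(\tv,v)\in\cE} F(|u(\tv,t)-u(v,t)|),
\end{align*}
and to exploit the fact that the defining relation \eqr{e:siga} is equivalent to $F'(s)=2s\,\rho(s)$, i.e.\ $F'(s)/s=2\rho(s)$. This is the crucial point: under time differentiation the factor $\sigma$ will disappear and be replaced by $\rho$, which is exactly the function appearing in the evolution $u_t=L_\mu u$.

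Next I would differentiate each summand using the chain rule on $\cH$. Writing $X_{v,\tv}(t)=u(\tv,t)-u(v,t)$ and using $\partial_t|X|=\langle X,\partial_t X\rangle/|X|$ together with $F'(s)/s=2\rho(s)$, one gets
\begin{align*}
\partial_t F(|X_{v,\tv}|)=2\,\rho(|X_{v,\tv}|)\,\langle X_{v,\tv},\,u_t(\tv,t)-u_t(v,t)\rangle.
\end{align*}
This is symmetric under swapping $v$ and $\tv$ (both $|X|$ and the inner product are), so converting the sum over unoriented edges $\cE$ into the full ordered-pair sum over $\tv\ne v$ costs exactly the usual factor of $\tfrac{1}{2}$.

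Then I would split the inner product $\langle X_{v,\tv},u_t(\tv)-u_t(v)\rangle$ into two terms. For the piece involving $u_t(v)$, fixing $v$ and pulling it out of the sum over $\tv$, the inner summation $\tfrac{1}{d}\sum_{\tv\ne v}\rho(|X_{v,\tv}|)X_{v,\tv}$ is precisely $u_t(v)=L_\mu u(v)$ by the evolution equation, so this contribution becomes $-\sum_v|u_t(v)|^2=-\|u_t\|_2^2$ (up to the bookkeeping constants). The piece involving $u_t(\tv)$ yields the same value after relabeling $\tv\leftrightarrow v$ and using the antisymmetry $X_{v,\tv}=-X_{\tv,v}$. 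Summing the two equal contributions, with the prefactor $\tfrac{1}{d}$ of the energy and the factor $2$ from the chain rule, produces $-2\,\|u_t\|_2^2$.

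The main (mild) obstacle is purely bookkeeping: keeping track of the factor $2$ from unordered-versus-ordered edges and ensuring the chain-rule computation is arranged so that $F'(s)/s$ — not $F'(s)$ itself — multiplies the inner product, so that $\rho$ emerges in exactly the form needed to recognize $u_t=L_\mu u$ inside the inner sum. Once aligned, the identity closes in one line.
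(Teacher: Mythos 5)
Your proof is correct and is essentially the same computation as the paper's. The only difference is cosmetic: the paper differentiates $|f|^2\sigma(|f|)$ via the product rule and then substitutes \eqr{e:siga} to collapse the two terms into $\rho(|f|)\langle f,f_t\rangle$, whereas you package $s^2\sigma(s)$ as an antiderivative $F$ with $F'(s)=2s\rho(s)$ from the start, reaching the same intermediate expression in one step. The subsequent splitting of $\langle f,f_t\rangle$, relabeling $v\leftrightarrow\tv$ with $X_{v,\tv}=-X_{\tv,v}$, and recognition of the inner sum as $L_\mu u(v)=u_t(v)$ are identical to the paper's argument, and the bookkeeping of the $\tfrac{1}{d}$ and the unordered/ordered factor of $\tfrac{1}{2}$ all works out to give $-2\|u_t\|_2^2$.
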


\begin{proof}
The derivative of the $\sigma$-weighted energy is given by
\begin{align}
	d \, \partial_t \, \|\nabla u (\cdot , t) \|_{2,\sigma}^2 &=
	   \sum_{\tv \ne v}  \langle (u_t(\tv,t) - u_t(v,t)) \, , (u(\tv,t) - u(v,t)) \rangle \, \sigma (|u(\tv,t) - u(v,t)|)  \notag \\
	   &\qquad +  \frac{1}{2}\, \sum_{\tv \ne v}   |u(\tv,t) - u(v,t)|^2 \, \partial_t \sigma (|u(\tv,t) - u(v,t)|) \, .
\end{align}
 Set $f = u (\tv , t) - u(v , t)$ to simplify notation.
 The chain rule gives
  \begin{align}
	  \partial_t \sigma (|f|) = \sigma'(|f|) \langle \frac{f}{|f|} , \, f_t \rangle \, .
\end{align}
Substituting this in, using \eqr{e:siga}, and then using that $f$ is skew in $v$ and $\tv$  gives
 \begin{align}
	d \, \partial_t \, \|\nabla u (\cdot , t) \|_{2,\sigma}^2 &=
	   \sum_{\tv \ne v}  \left(\langle  f_t  ,  \,f \, \rangle \sigma (|f|)    + 
	    \frac{1}{2}\,    |f|^2 \, \sigma'(|f|) \langle \frac{f}{|f|} \, , f_t \rangle  \right) 
	    =  \sum_{\tv \ne v}   \langle f_t  ,  \,f \, \rangle \rho (|f|)
	     \notag \\
	    &=  -2 \, \sum_{\tv \ne v}    \langle u_t  (v,t)  \, , f   \rangle	  \, \rho (|f|)  
	    =  -2 \, d\, \sum_{ v}    |u_t|^2 (v,t) \, .
\end{align}
   
\end{proof}

 \subsection{When $\rho = s^{-\alpha}$}

We consider next    where   $\rho (s) = s^{-\alpha}$ is homogeneous.    We will consider three cases, depending on $\alpha$.  
 First, if $0< \alpha < 2$, then   
\begin{align}	\label{e:intalp}
	\sigma(s) = 2\, s^{-2} \, \int_0^s \tau^{1-\alpha} \, d\tau =  \frac{2}{2-\alpha} \,  s^{-\alpha} = 
      \frac{2}{2-\alpha} \, \rho (s) \, .
\end{align}
Proposition \ref{p:sig} gives that
  $\| \nabla u(\cdot , t) \|_{2,\rho}$ is nonincreasing and
\begin{align}	\label{e:fromfirst}
	  \partial_t \, \|\nabla u (\cdot ,t) \|_{2,\rho}^2 & =  (\alpha -2) \, \| u_t (\cdot , t) \|_2^2  	   	\, .  
\end{align}
If $\alpha > 2$, then the integral in \eqr{e:intalp} diverges; however, 
$\sigma(s)   = 
      \frac{2}{2-\alpha} \, \rho (s)$ satisfies \eqr{e:siga} and \eqr{e:fromfirst} still holds.  The right hand side is   nonpositive, so $\|\nabla u (\cdot ,t) \|_{2,\rho}^2$ is nondecreasing.{\footnote{In the special case
where  $\rho (s) = s^{-3}$, the weighted (or potential) energy is increasing; cf. Section \ref{s:nbody}.}}
Finally, when $\alpha =2$, we get a formal solution $\sigma (s) = 2s^{-2} \, \log s$ and 
Proposition \ref{p:sig} gives 
\begin{align}
	\partial_t \, \| \nabla u (\cdot ,t) \|_{2,s^{-2} \, \log s}^2 = -  \| u_t (\cdot , t) \|_{2}^2 \, .
\end{align}

 \subsection{Properties of $\sigma$ in general}
 
The next lemma collects a few useful properties of $\sigma$ in general.

\begin{Lem}
When $\rho:[0,\infty)\to [0,1]$ is monotone nonincreasing and $\sigma$ given by \eqr{e:sigma2}, then for all $0<r<s$
\begin{align}
\rho (s)&\leq \sigma (s)\leq \sigma (0)=\rho (0)\, ,\label{e:above}\\
\sigma'(s)&\leq 0\, ,\label{e:noninc}\\
\sigma (s)&\leq \left(\frac{r}{s}\right)^2\,\sigma (r)+\left(1-\left(\frac{r}{s}\right)^2\right)\, \rho (r)\leq  \left(\frac{r}{s}\right)^2\,\sigma (0)+\left(1-\left(\frac{r}{s}\right)^2\right)\, \rho (r)\, .\label{e:convsorts}
\end{align}
Inequality \eqr{e:convsorts} implies that if $\rho$ goes to zero at infinity, then so does $\sigma$.  

\end{Lem}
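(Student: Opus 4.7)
The plan is to prove the three displayed inequalities in the order listed, since each leans on the previous, and to finish with the asymptotic consequence.

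First, for \eqr{e:above}, I would exploit monotonicity of $\rho$ directly on the integrand in \eqr{e:sigma2}: on $[0,s]$ one has $\rho(s) \leq \rho(\tau) \leq \rho(0)$, so multiplying by $\tau \geq 0$, integrating, and dividing by $s^2/2$ gives $\rho(s) \leq \sigma(s) \leq \rho(0)$. The equality $\sigma(0)=\rho(0)$ is to be read as the limit $s\to 0^+$ of the $0/0$ form in \eqr{e:sigma2}; L'Hopital yields $\lim_{s\to 0^+}\sigma(s) = \lim_{s\to 0^+}\frac{2s\rho(s)}{2s} = \rho(0)$, assuming right-continuity of $\rho$ at $0$ (if $\rho$ jumps at $0$, this identity is the definition of $\sigma(0)$).

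Second, for \eqr{e:noninc}, rather than differentiating the integral directly I would use the identity \eqr{e:siga} already recorded in the paper. Solving for $\sigma'$ gives $\sigma'(s) = \tfrac{2}{s}\,(\rho(s)-\sigma(s))$ for $s>0$, and the first claim $\rho(s)\leq \sigma(s)$ then forces $\sigma'(s)\leq 0$.

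Third, for \eqr{e:convsorts}, I would split the defining integral at $r$,
\begin{align}
\int_0^s \tau\,\rho(\tau)\,d\tau \;=\; \int_0^r \tau\,\rho(\tau)\,d\tau \;+\; \int_r^s \tau\,\rho(\tau)\,d\tau \;=\; \tfrac{r^2\sigma(r)}{2} \;+\; \int_r^s \tau\,\rho(\tau)\,d\tau,
\end{align}
and bound the tail by $\rho(r)\,(s^2-r^2)/2$ using $\rho(\tau)\leq \rho(r)$ on $[r,s]$. Multiplying through by $2/s^2$ yields the first inequality of \eqr{e:convsorts}; the second inequality is then $\sigma(r)\leq \sigma(0)$, which is just \eqr{e:above} again (or the monotonicity \eqr{e:noninc} just proved). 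The final remark about $\sigma$ going to zero at infinity is an $\epsilon$-bookkeeping step: given $\epsilon>0$, choose $r$ large enough that $\rho(r)<\epsilon/2$, then choose $s$ large enough that $(r/s)^2\,\sigma(0)<\epsilon/2$, and \eqr{e:convsorts} gives $\sigma(s)<\epsilon$.

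There is no real obstacle here, since every step reduces to monotonicity of $\rho$ or to algebra with the identity \eqr{e:siga}. The only mild subtlety is interpreting $\sigma(0)$, since the formula in \eqr{e:sigma2} is an indeterminate form at $s=0$; reading $\sigma(0)$ as $\lim_{s\to 0^+}\sigma(s)$ (which equals $\rho(0)$ when $\rho$ is right-continuous) removes the ambiguity.
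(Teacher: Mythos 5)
Your proposal is correct and follows essentially the same route as the paper: monotonicity of $\rho$ in the integrand gives the lower bound $\rho(s)\le\sigma(s)$, the identity \eqr{e:siga} converts that into $\sigma'\le 0$, and splitting the integral defining $s^2\sigma(s)$ at $r$ gives \eqr{e:convsorts}. The only cosmetic difference is that you obtain the upper bound $\sigma(s)\le\rho(0)$ directly from $\rho(\tau)\le\rho(0)$ in the integrand, whereas the paper reads it off from the monotonicity $\sigma'\le 0$ just proved; both are immediate. Your L'Hopital remark about $\sigma(0)$ is a small extra care the paper leaves implicit.
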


\begin{proof}
It follows immediately from the definition of $\sigma$ that $\sigma (0)=\rho (0)$.  
Since $\rho$ is nonincreasing we have that 
\begin{align}
	s^2 (\sigma (s) - \rho(s) ) &= 2 \, \int_0^s \tau \, \rho (\tau) \, d \tau - 2 \, \int_0^s \tau \, \rho (s) \, d\tau
	= 2\, \int_0^s \tau (\rho (\tau) - \rho (s)) \, d\tau \geq 0 \, . 
\end{align}
Combining this with \eqr{e:siga} gives
\begin{align}
	\frac{s}{2} \, \sigma'(s) = \rho(s) - \sigma (s) \leq 0 \, . 
\end{align}
It follows from the above that \eqr{e:above} and \eqr{e:noninc} hold.   Finally, since
\begin{align}
s^2\,\sigma (s)&=2\int_0^r \tau\,\rho (\tau)\,d\tau+2\int_r^s\tau\,\rho (\tau)\,d\tau\notag\\
&= r^2\,\sigma(r)+2\int_r^s\tau\,\rho (\tau)\,d\tau
\leq r^2\,\sigma (r)+(s^2-r^2)\,\rho (r)\, ,
\end{align} 
we get \eqr{e:convsorts}.  
\end{proof}



 \section{A nonlinear three circles theorem for the variance}	\label{s:circles}

We will assume that $u: \Gamma \times \RR \to \cH$ satisfies $\partial_t u = L_{\mu} u$ with $\mu = \rho$ and $\rho (s) = s^{-\alpha}$ for some  $\alpha \in (0,2)$.  
We will set 
\begin{align}
I(t)&= \left( \Var_{u(\cdot , t)} \right)^{ \frac{\alpha}{2} } \, ,\\
U(t) & = I'(t) \,  .
\end{align}
The $U$ defined this way will be thought of as a nonlinear frequency.  
These are the same quantities that came up in Theorem \ref{p:vardecay} which 
gives a negative upper bound for $U$.

The main result of this section is the convexity of $I$ (or equivalently monotonicity of $U$).  This convexity  does not depend on $d$ or on the dimension of the target.

\begin{Thm}  \label{t:threecircles}
$I$ is nonnegative, nonincreasing, and convex.   Consequently, the variance itself is nonincreasing and convex.
\end{Thm}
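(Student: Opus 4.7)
\medskip

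\noindent\textbf{Proof plan.} Nonnegativity is automatic since $\text{Var}\geq 0$ and $\alpha/2>0$. Monotonicity is immediate from Corollary \ref{c:usquared}, which gives
\begin{align}
	(d+1)\, \partial_t \Var_{u(\cdot,t)} = \partial_t \|u-\cA_u\|_2^2 = -2\,\|\nabla u\|_{2,\rho}^2 \leq 0 \, ,
\end{align}
so $\Var_{u(\cdot,t)}$ is nonincreasing and hence so is $I=\Var^{\alpha/2}$. Given convexity of $I$, convexity of the variance itself follows since $\Var = I^{2/\alpha}$ with $2/\alpha>1$, and the map $x\mapsto x^{2/\alpha}$ is convex and nondecreasing on $[0,\infty)$; concretely, differentiating twice gives a nonnegative combination of $(I')^2$ and $I''$. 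So everything reduces to convexity of $I$.

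\medskip

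\noindent To prove convexity of $I$, set $f(t)=\|u(\cdot,t)-\cA_{u(\cdot,t)}\|_2^2$, so that $I=f^{\alpha/2}/(d+1)^{\alpha/2}$. A direct computation gives
\begin{align}
	I'' = \frac{\alpha}{2(d+1)^{\alpha/2}}\, f^{\alpha/2-2}\left[ f\,f'' - \bigl(1-\tfrac{\alpha}{2}\bigr)(f')^2 \right] \, ,
\end{align}
so since $\alpha<2$, convexity of $I$ is equivalent to the differential inequality
\begin{align}
	f\,f'' \geq \bigl(1-\tfrac{\alpha}{2}\bigr)(f')^2 \, .
\end{align}
The plan is to expand both sides using the tools from Sections \ref{s:poinc} and \ref{s:monoE} and reduce to a Cauchy--Schwarz inequality.

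\medskip

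\noindent The two key inputs are as follows. First, Corollary \ref{c:usquared} gives $f'=-2\,\|\nabla u\|_{2,\rho}^2$. Second, for $\rho(s)=s^{-\alpha}$ with $0<\alpha<2$, the function $\sigma$ from \eqr{e:sigma2} satisfies $\sigma=\tfrac{2}{2-\alpha}\rho$ (see \eqr{e:intalp}), so Proposition \ref{p:sig} yields
\begin{align}
	\partial_t\|\nabla u\|_{2,\rho}^2 = \tfrac{2-\alpha}{2}\,\partial_t\|\nabla u\|_{2,\sigma}^2 = -(2-\alpha)\,\|u_t\|_2^2 \, ,
\end{align}
and hence $f'' = 2(2-\alpha)\,\|u_t\|_2^2$. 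Plugging these into the required inequality and cancelling the common factor $4(2-\alpha)$ reduces the convexity to
\begin{align}
	\|u-\cA_u\|_2^2 \cdot \|u_t\|_2^2 \geq \|\nabla u\|_{2,\rho}^4 \, ,
\end{align}
which is exactly the Cauchy--Schwarz bound \eqr{e:egyb2} of Lemma \ref{l:energyr} squared, recalling that $u_t=L_\mu u$.

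\medskip

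\noindent The only step that might require care is the identification of $f''$: one has to correctly track that the $\sigma$-weighted energy and the $\rho$-weighted energy differ by a constant factor precisely when $\rho$ is homogeneous, so Proposition \ref{p:sig} can be applied to control $\partial_t g$ for $g=\|\nabla u\|_{2,\rho}^2$. Once this is done, the three circles inequality is a clean consequence of Cauchy--Schwarz, and is manifestly independent of $d$ and of $\dim\cH$, as asserted.
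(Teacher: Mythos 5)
Your proof is correct and follows essentially the same route as the paper's (Lemma \ref{l:diffine}): compute $f'=-2\,\|\nabla u\|_{2,\rho}^2$ via Corollary \ref{c:usquared}, compute $f''=2(2-\alpha)\|u_t\|_2^2$ via \eqr{e:fromfirst}, and reduce $ff''\geq(1-\tfrac{\alpha}{2})(f')^2$ to the Cauchy--Schwarz bound $g^2\leq f\,\|L_\mu u\|_2^2$ of Lemma \ref{l:energyr}. (Minor slip: the common factor you cancel is $2(2-\alpha)$, not $4(2-\alpha)$, but the reduced inequality you state is the right one.)
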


This has the following nonlinear Hadamard's three circles type theorem as consequence (the same conclusions hold for the variance):

\begin{Cor}	\label{c:threecircles}
For $0<r<s$
\begin{align}
	I(r) &\leq \frac{s-r}{s}\,I(0)+\frac{r}{s}\,I(s)\, ,\\
     I(0)-I(s)&\leq \frac{s}{r}\,\left(I(0)-I(r)\right) \, . \label{e:125}
\end{align}
\end{Cor}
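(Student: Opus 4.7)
The plan is to deduce both inequalities from Theorem \ref{t:threecircles} by purely elementary manipulations; all the analytic content (nonnegativity, monotonicity, convexity of $I$) has already been invested in that theorem, so nothing hard remains.

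For the first inequality, I would simply apply convexity of $I$ at the convex combination expressing $r$ in terms of $0$ and $s$. For $0 < r < s$, write
\begin{equation}
r = \frac{s-r}{s} \cdot 0 + \frac{r}{s} \cdot s,
\end{equation}
and since both weights $\frac{s-r}{s}$ and $\frac{r}{s}$ are nonnegative and sum to $1$, convexity of $I$ gives
\begin{equation}
I(r) \leq \frac{s-r}{s}\,I(0) + \frac{r}{s}\,I(s),
\end{equation}
which is the first stated inequality.

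For the second inequality, I would rearrange the first. Subtracting $I(0)$ from both sides of the bound just obtained yields
\begin{equation}
I(r) - I(0) \leq -\frac{r}{s}\,I(0) + \frac{r}{s}\,I(s) = \frac{r}{s}\bigl(I(s) - I(0)\bigr),
\end{equation}
so that $I(0) - I(r) \geq \frac{r}{s}\bigl(I(0) - I(s)\bigr)$. Multiplying both sides by $\frac{s}{r} > 0$ (which does not flip the inequality) produces exactly \eqref{e:125}. Note that monotonicity of $I$ guarantees $I(0) - I(s) \geq 0$, so this really is a meaningful one-sided bound. The analogous statements for the variance itself follow by replacing $I$ with $\Var_{u(\cdot,t)}$, which is also nonincreasing and convex by Theorem \ref{t:threecircles}.

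There is no real obstacle here: the entire content of the corollary is packaged in the convexity assertion of Theorem \ref{t:threecircles}, and the three-circles-style statements are then just the two standard reformulations of convexity, namely that the graph lies below each chord and that the difference quotient is monotone in the endpoint.
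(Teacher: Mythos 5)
Your proof is correct and follows the same route as the paper: the first inequality is convexity of $I$ applied to the convex combination $r = \tfrac{s-r}{s}\cdot 0 + \tfrac{r}{s}\cdot s$, and the second is an algebraic rearrangement of the first. You simply spell out the elementary manipulations that the paper leaves implicit.
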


\begin{proof}
The first claim follows from convexity of $I$ and the second follows from the first.    
\end{proof}


\vskip2mm
Theorem \ref{t:threecircles} is an immediate consequence of the next lemma:

\begin{Lem}	\label{l:diffine}
If $0 \leq \alpha < 2$,  then for $ f(t)=(d+1) \,  \Var_u = \|u-\cA_u\|^2_2$ and 
$g(t)=\|\nabla u\|^2_{2,\rho}$
\begin{align}
 	g_t=\partial_t \, \| \nabla u \|_{2,\rho}^2  & \leq - (2-\alpha) \, \frac{ \| \nabla u \|_{2,\rho}^4 }{ (d+1) \, \Var_u } =- (2-\alpha) \, \frac{g^2}{f}   \, , \label{e:122} \\
	 \left( f^{\frac{ \alpha}{2}}   \right)_{t t} &=  -\alpha \,\left( \frac{ g}{ f^{ \frac{2-\alpha}{2} } } \right)_t \geq 0 \, . \label{e:1113} 
\end{align}
\end{Lem}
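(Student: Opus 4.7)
The plan is to prove the two estimates in turn, with the first being the real content and the second being a direct algebraic consequence. The key inputs are Corollary \ref{c:usquared} (which gives $f_t = -2g$), Proposition \ref{p:sig} (which controls $g_t$), and the Cauchy--Schwarz bound in Lemma \ref{l:energyr}.

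First, I would establish the differential identity for $g_t$. Since $\rho(s) = s^{-\alpha}$ with $0 \leq \alpha < 2$, the calculation in the paragraph following Proposition \ref{p:sig} shows that $\sigma = \frac{2}{2-\alpha}\rho$ satisfies \eqr{e:siga}. Applying Proposition \ref{p:sig} to this $\sigma$ and using linearity gives
\begin{align}
    g_t = \partial_t \|\nabla u\|_{2,\rho}^2 = -(2-\alpha) \, \|u_t(\cdot,t)\|_2^2 \, .
\end{align}
On the other hand, from Lemma \ref{l:energyr}, the Cauchy--Schwarz bound \eqr{e:egyb2} gives $g = \|\nabla u\|_{2,\rho}^2 \leq \|u-\cA_u\|_2 \, \|L_\mu u\|_2 = \sqrt{f}\,\|u_t\|_2$, so after squaring $\|u_t\|_2^2 \geq g^2/f$. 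Plugging this into the identity for $g_t$ yields \eqr{e:122} immediately; this is the main step, and the only place where the quasilinear nature of the equation really enters through the clean matching between the $\sigma$-energy and $\|u_t\|_2^2$.

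For \eqr{e:1113}, the equality is a direct chain-rule computation: using $f_t = -2g$ from Corollary \ref{c:usquared},
\begin{align}
    \left(f^{\alpha/2}\right)_t = \tfrac{\alpha}{2} f^{(\alpha-2)/2} f_t = -\alpha \, g \, f^{-(2-\alpha)/2} = -\alpha \, \frac{g}{f^{(2-\alpha)/2}} \, ,
\end{align}
so differentiating once more gives the claimed identity. The inequality then amounts to showing $(g/f^{(2-\alpha)/2})_t \leq 0$. Applying the quotient rule and again substituting $f_t = -2g$,
\begin{align}
    \left( \frac{g}{f^{(2-\alpha)/2}} \right)_t = \frac{g_t}{f^{(2-\alpha)/2}} + \frac{(2-\alpha)\,g^2}{f^{(4-\alpha)/2}} \, .
\end{align}
Now insert the bound $g_t \leq -(2-\alpha)g^2/f$ from \eqr{e:122}: the two terms cancel exactly, leaving the derivative $\leq 0$. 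Since $\alpha \geq 0$, multiplying by $-\alpha$ flips the sign and yields $\left(f^{\alpha/2}\right)_{tt} \geq 0$, completing the proof.

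The only potential obstacle is getting the algebraic bookkeeping in the second step right, but the cancellation is forced by the fact that the exponent $\alpha/2$ is tuned precisely to balance $f_t = -2g$ against the differential inequality \eqr{e:122}. In particular, the convexity of $f^{\alpha/2}$ is a genuine reflection of \eqr{e:122}, not a stronger hypothesis.
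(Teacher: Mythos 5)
Your proof is correct and takes essentially the same route as the paper: both use Corollary \ref{c:usquared} for $f_t = -2g$, the identity $g_t = (\alpha-2)\|u_t\|_2^2$ from \eqr{e:fromfirst}, and the Cauchy--Schwarz bound $g^2 \leq f\,\|L_\mu u\|_2^2$ from Lemma \ref{l:energyr} to obtain \eqr{e:122}, then the same quotient-rule cancellation to get the convexity in \eqr{e:1113}. The only cosmetic difference is that the paper factors the second derivative as $g\,f^{(\alpha-2)/2}\{g_t/g + (2-\alpha)g/f\}$ rather than spelling out the two separate quotient-rule terms.
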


\begin{proof}
Corollary \ref{c:usquared} gives that 
\begin{align}
   f_t &= \partial_t \|u-\cA_u\|_2^2
   =- 2\, \|\nabla (u-\cA_u)\|_{2,\rho}^2 = - 2\, g \, .
 \end{align}
The derivative of $g$ is given by \eqr{e:fromfirst}
 \begin{align}
   g_t &=\partial_t \|\nabla (u-\cA_u)\|_{2,\rho}^2= (\alpha -2) \,\|(u-\cA_u)_t\|_2^2
   		 = (\alpha -2) \,\| L_{\mu} u \|_2^2\, .
  \end{align}
  Since $g^2 \leq f \, \| L_{\mu} u \|_2^2$ by Lemma  \ref{l:energyr}, we
get
  \eqr{e:122}.  
  
The first equality in \eqr{e:1113} follows from the chain rule since $f_t = -2g$. We then use that
\begin{align}
	\left( g \, f^{ \frac{\alpha -2}{2} } \right)_t &= 
	 g \, f^{ \frac{\alpha -2}{2} }  \, \left\{  \frac{g_t }{g}   + \left( \frac{\alpha -2}{2}  \right)\,
		  \frac{f_t}{f}    \right\} =   g \, f^{ \frac{\alpha -2}{2} }  \, \left\{  \frac{g_t }{g}   + \left( 2- \alpha   \right)\,
		  \frac{g}{f}    \right\}  \leq 0 \, ,
\end{align}
where the last inequality used    \eqr{e:122}.  
    
\end{proof}

\section{Variance in the discrete case}

We now return to the discrete case where $u:\Gamma  \to \cH$ and $A_u$ is the time one map for $\partial_t - L_{\mu}$ with $\mu = \rho$.   

\begin{Lem}	\label{vardiff}
We have
\begin{align}
	(d+1) \, \left( \Var_{A_u} - \Var_u \right)=   -2 \, \| \nabla u \|_{2,\rho}^2 + \| L_{\mu} u \|_2^2 \,  .
\end{align}
\end{Lem}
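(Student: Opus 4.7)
The plan is to expand the variance of $A_u$ directly and identify the inner product term as the $\rho$-weighted energy. Since $A_u = u + L_\mu u$ by definition of the time one map, and since $\mu_{v,\tv} = \rho(|u(v)-u(\tv)|)$ is symmetric in the edge, Lemma \ref{l:preserve} gives $\sum_v (L_\mu u)(v) = 0$, so $\cA_{A_u} = \cA_u$. Using $(d+1)\Var_w = \|w\|_2^2 - (d+1)|\cA_w|^2$, we immediately get
\begin{align}
    (d+1)(\Var_{A_u} - \Var_u) = \|A_u\|_2^2 - \|u\|_2^2.
\end{align}

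Next I would expand the right hand side using $A_u = u + L_\mu u$ and the Hilbert structure on $\cH$:
\begin{align}
    \|A_u\|_2^2 - \|u\|_2^2 = \|u + L_\mu u\|_2^2 - \|u\|_2^2 = 2 \sum_v \langle u(v), (L_\mu u)(v)\rangle + \|L_\mu u\|_2^2.
\end{align}

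Finally, Lemma \ref{l:energyr} identifies the cross term: $\sum_v \langle u(v), (L_\mu u)(v)\rangle = -\|\nabla u\|_{2,\rho}^2$. Substituting yields exactly the claimed identity. There is no real obstacle here; it is a direct computation and the only subtlety is remembering that the symmetry of $\mu$ is needed (via Lemma \ref{l:preserve}) to ensure the average is preserved so that the $|\cA|^2$ terms cancel.
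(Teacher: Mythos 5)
Your proof is correct and matches the paper's own argument essentially line for line: both use $\cA_{A_u}=\cA_u$ to reduce to $\|A_u\|_2^2-\|u\|_2^2$, expand via $A_u = u + L_\mu u$, and invoke Lemma \ref{l:energyr} to identify the cross term as $-\|\nabla u\|_{2,\rho}^2$. The only cosmetic difference is that the paper does the expansion pointwise at each $v$ before summing, whereas you work directly in the $L^2$ norm.
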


\begin{proof}
Since $\cA_{A_u} = \cA_u$ and the variance is unchanged when we subtract a constant, we  have
\begin{align}
	(d+1) \, \left( \Var_{A_u} - \Var_u \right)=  \| A_u \|_2^2 - \| u \|_2^2 \, .
\end{align}
At each $v$ in $\Gamma$, we have
\begin{align}
	|A_u|^2 - |u|^2  = 2\langle u\, , (A_u - u) \rangle + |A_u - u|^2 
	= 2\, \langle u \, , L_{\mu} u \rangle  + \left| L_{\mu} u \right|^2 \, .
\end{align}
The lemma follows by summing this over $v$  and 
applying Lemma \ref{l:energyr}.
 \end{proof}

\begin{Cor}	\label{c:deltavar}
We have
\begin{align}
	(d+1) \, \left( \Var_{A_u} - \Var_u \right) 
		\leq 2 \, \left( \max \rho - 1 \right) \, \| \nabla u \|_{2,\rho}^2 \,  .
\end{align}
\end{Cor}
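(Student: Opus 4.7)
The plan is to combine the exact identity of Lemma \ref{vardiff} with the upper bound on $\|L_\mu u\|_2^2$ from Lemma \ref{l:Lene}, and then convert the $\rho^2$-weighted energy that appears in that bound into a $\rho$-weighted energy via the elementary pointwise estimate $\rho^2 \leq (\max \rho)\,\rho$.

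More precisely, Lemma \ref{vardiff} already gives the identity
\[
(d+1)\left(\Var_{A_u} - \Var_u\right) = -2\,\|\nabla u\|_{2,\rho}^2 + \|L_\mu u\|_2^2,
\]
so the only thing left to do is estimate the error term $\|L_\mu u\|_2^2$ from above by a suitable multiple of $\|\nabla u\|_{2,\rho}^2$. Lemma \ref{l:Lene} supplies the bound $\|L_\mu u\|_2^2 \leq 2\,\|\nabla u\|_{2,\rho^2}^2$, whose right-hand side uses the weight $\rho^2$ rather than the weight $\rho$ that appears in Corollary \ref{c:deltavar}. Since $0 \leq \rho(s) \leq \max \rho$ for every $s$, we have $\rho(s)^2 \leq (\max \rho)\,\rho(s)$ pointwise. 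Applying this inequality edge by edge in the definition of the weighted energy gives
\[
\|\nabla u\|_{2,\rho^2}^2 \leq \max \rho \cdot \|\nabla u\|_{2,\rho}^2.
\]

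Chaining the two inequalities yields $\|L_\mu u\|_2^2 \leq 2\,\max \rho\cdot \|\nabla u\|_{2,\rho}^2$. Substituting this back into the identity of Lemma \ref{vardiff} and collecting terms produces
\[
(d+1)\left(\Var_{A_u} - \Var_u\right) \leq -2\,\|\nabla u\|_{2,\rho}^2 + 2\,\max \rho \cdot \|\nabla u\|_{2,\rho}^2 = 2\,(\max \rho - 1)\,\|\nabla u\|_{2,\rho}^2,
\]
which is exactly the claimed inequality.

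No real obstacle arises: the corollary is a direct algebraic consequence of Lemmas \ref{vardiff} and \ref{l:Lene} together with the pointwise weight comparison $\rho^2 \leq (\max \rho)\,\rho$. The only mildly substantive point is conceptual rather than technical, namely that the discretization error $\|L_\mu u\|_2^2$ cannot absorb more than a fraction $\max \rho$ of the dissipative term $2\,\|\nabla u\|_{2,\rho}^2$; in particular, since $\rho \leq 1$ the right-hand side is nonpositive, so the corollary also gives the qualitative fact that the variance is nonincreasing under the time-one map $A_u$.
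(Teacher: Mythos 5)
Your argument is correct and is precisely the paper's own proof: combine the identity of Lemma~\ref{vardiff} with the bound $\|L_\mu u\|_2^2 \leq 2\,\|\nabla u\|_{2,\rho^2}^2$ from Lemma~\ref{l:Lene}, then use $\rho^2 \leq (\max\rho)\,\rho$ to replace the $\rho^2$-weighted energy by $(\max\rho)\,\|\nabla u\|_{2,\rho}^2$. The paper leaves that last pointwise comparison implicit, but the route and the conclusion are identical.
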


\begin{proof}
Combining Lemmas \ref{vardiff} and  \ref{l:Lene} gives
\begin{align}
	(d+1) \, \left( \Var_{A_u} - \Var_u \right)=   -2 \, \| \nabla u \|_{2,\rho}^2 + \| L_{\mu} u \|_2^2 
		\leq 2 \, \left( \max \rho - 1 \right) \, \| \nabla u \|_{2,\rho}^2 \,  .
\end{align}
\end{proof}

\begin{Thm}	\label{p:vardecay3}
If $\rho (s) \geq c\, s^{-\alpha}$ for   $s\geq 1$ where $0< \alpha < 2$ and $\rho \leq \rho_0 < 1$, then  
	\begin{itemize}
	 \item
	 $ \Var_{A_u}^{  \frac{\alpha}{2} } -   \Var_{u}^{  \frac{\alpha}{2} } \leq -
	C  {\text{ if }}   1 \leq \Var_{u}$.
	 \item  $\log \Var_{A_u} - \log \Var_u  \leq -C
	   {\text{ if }}   \Var_{u} < 1$.
	\end{itemize}
Here the constant $C > 0$ depends on $c$, $d$, $\alpha$ and $\rho_0$.
\end{Thm}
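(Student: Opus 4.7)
The plan is to adapt the proof of Theorem \ref{p:vardecay2} to the discrete setting, using Corollary \ref{c:deltavar} (playing the role of the time derivative of the variance) together with the weighted Poincar\'e inequality from Corollary \ref{c:poin1}.  Write $f=(d+1)\,\Var_u$, $f_1=(d+1)\,\Var_{A_u}$, and $g=\|\nabla u\|_{2,\rho}^2$.  The hypothesis $\rho\leq \rho_0<1$ sharpens Corollary \ref{c:deltavar} to
\begin{align}
f_1-f\leq -2(1-\rho_0)\,g\, ,
\end{align}
and Corollary \ref{c:poin1} provides the compatible lower bound $f^{(2-\alpha)/2}\leq C\bigl(g+g^{(2-\alpha)/2}\bigr)$ with $C=C_{c,d,\alpha}$.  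These two estimates are the only analytic input.

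The remaining work is to pass from the linear decrement $f_1-f\leq -2(1-\rho_0)g$ to the nonlinear decrements in the statement, which I will do via concavity.  For $\Var_u\geq 1$, concavity of $x\mapsto x^{\alpha/2}$ (valid since $\alpha<2$) gives
\begin{align}
\Var_{A_u}^{\alpha/2}-\Var_u^{\alpha/2}\leq -(1-\rho_0)\,\alpha\,(d+1)^{-\alpha/2}\,\frac{g}{f^{(2-\alpha)/2}}\, ,
\end{align}
while for $\Var_u<1$, concavity of $\log$ yields
\begin{align}
\log \Var_{A_u}-\log \Var_u \leq \frac{f_1-f}{f}\leq -2(1-\rho_0)\,\frac{g}{f}\, .
\end{align}
It therefore suffices to show that $g/f^{(2-\alpha)/2}$ is bounded below by a positive constant whenever $f\geq d+1$, and that $g/f$ is bounded below by a positive constant whenever $f<d+1$.

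The main obstacle is the dichotomy built into Corollary \ref{c:poin1}: because the right-hand side mixes $g$ and $g^{(2-\alpha)/2}$, one must split on whether $g\geq 1$ or $g<1$.  When $g\geq 1$ the linear term dominates and yields $f^{(2-\alpha)/2}\leq 2Cg$; when $g<1$ the sublinear term dominates and yields $g\geq f/(2C)^{2/(2-\alpha)}$.  In each of the four resulting subcases (whether $f\geq d+1$ or $f<d+1$, crossed with whether $g\geq 1$ or $g<1$), a short computation shows that the relevant ratio is bounded below by a constant depending only on $c,d,\alpha,\rho_0$; the subcase $f\geq d+1$, $g<1$ is the one that genuinely uses $f^{\alpha/2}\geq (d+1)^{\alpha/2}$.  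This case bookkeeping is exactly what explains the transition at $\Var_u=1$ in the conclusion, and once it is in hand the theorem follows at once.
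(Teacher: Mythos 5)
Your argument is correct and uses the same two analytic inputs as the paper (Corollary \ref{c:deltavar} for the variance decrement and Corollary \ref{c:poin1} for the weighted Poincar\'e inequality), but the final passage to the stated conclusion is genuinely different. The paper first isolates the dominant term in Corollary \ref{c:poin1} via the elementary Lemma \ref{l:eleme} (dichotomizing on whether $\Var_u$ is above or below $1$), landing on the inequality $\Var_{A_u} \leq \Var_u - (C_1/C_2')\,\Var_u^{1-\alpha/2}$ in the large-variance regime, and then invokes the somewhat ad hoc Lemma \ref{l:moredecay}, whose proof rests on showing the auxiliary function $G(y)=(y-Cy^{1-p})^p - y^p$ has nonpositive derivative for $y$ large. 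You instead apply the concavity of $x\mapsto x^{\alpha/2}$ (resp.\ $\log$) directly to the difference quotient, converting the linear drop $\Var_{A_u}-\Var_u \leq -2(1-\rho_0)g/(d+1)$ into a lower bound on $\Var_u^{\alpha/2}-\Var_{A_u}^{\alpha/2}$ (resp.\ $\log\Var_u - \log\Var_{A_u}$) that only requires a positive lower bound on $g/f^{(2-\alpha)/2}$ (resp.\ $g/f$); these lower bounds then follow from the same split on whether $g\geq 1$ or $g<1$ that is implicit in the paper's Lemma \ref{l:eleme}. Your route is cleaner because it eliminates Lemma \ref{l:moredecay} entirely — the tangent-line estimate at $\Var_u$ does the work in one step, and it remains valid even when $\Var_{A_u}=0$ (the left-hand sides are then $-\Var_u^{\alpha/2}$ or $-\infty$). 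The paper's route has the mild advantage of separating a reusable inequality (Lemma \ref{l:moredecay}) from the specific dynamics, but for this theorem the concavity argument is the more economical one.
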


\begin{proof}
Corollary \ref{c:deltavar}
gives $C_1 = C_1 (d,\rho_0) > 0$ so that 
\begin{align}	\label{e:C1}
	  \Var_{A_u} - \Var_u  
		\leq -C_1 \,  \| \nabla u \|_{2,\rho}^2 \,  .
\end{align}
Corollary \ref{c:poin1}
gives $C_2 = C_2(c,d,\alpha)$ so that 
 \begin{align}	\label{e:C2here}
 	\Var_u^{ \frac{2-\alpha}{2} } 
		 \leq  C_2  \, \left(  \| \nabla u \|_{2,\rho}^2 +  \| \nabla u \|_{2,\rho}^{2-\alpha} 
		 \right) \, .   
 \end{align}
Suppose that $\Var_u \leq 1$.  Lemma \ref{l:eleme} below (with $y= \Var_u$, $x=\| \nabla u \|^2_{2,\rho}$ and $p=\frac{2-\alpha}{2}$)
gives $C_2'>0$ so that
$\Var_u 
		 \leq  C_2'  \,    \| \nabla u \|_{2,\rho}^2$ and, thus, \eqr{e:C1} gives
 \begin{align}	\label{e:C1a}
	  \Var_{A_u} - \Var_u  
		\leq -C_1 \,  \| \nabla u \|_{2,\rho}^2 \leq - \frac{C_1 }{ C_2' } \, \Var_u  \,  ,
\end{align}
giving the decay in this case.

If $\Var_u \geq 1$, then Lemma \ref{l:eleme} gives $	\Var_u^{ \frac{2-\alpha}{2} } 
		 \leq  C_2'  \,    \| \nabla u \|_{2,\rho}^2$ and, thus,
\begin{align}
	\Var_{A_u} \leq \Var_u - \frac{C_1 }{ C_2' } \, \Var_u^{ 1 - \frac{\alpha}{2}} \, .
\end{align}
 The decay in this case now follows from Lemma \ref{l:moredecay} below (with $x = \Var_{A_u}$, $y= \Var_u$ and $p= \frac{\alpha}{2}$).
\end{proof}

\begin{Lem}	\label{l:eleme}
Suppose that $0 < p < 1$, $C > 0$, and $x,y > 0$ satisfy
	$y^p \leq C \, (x + x^p) \, .$
There exists $C' = C' (p,C)$ so that
\begin{itemize}
\item $y \leq C' \, x$ if $y\leq 1$.
\item  $y^p \leq C' \, x$ if $1 \leq y$.
\end{itemize}
\end{Lem}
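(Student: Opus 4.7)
The plan is to split into cases based on whether $x$ and $y$ lie above or below $1$, and exploit $p<1$ to compare $s$ with $s^p$ in both the hypothesis and the conclusion. The key algebraic observation is that for $s \geq 1$ one has $s^p \leq s$, while for $s \leq 1$ one has $s \leq s^p$; the former lets us absorb $x^p$ into $x$, the latter lets us absorb $x$ into $x^p$ (and similarly for $y$).

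First I would dispose of the case $x \geq 1$. Then $x^p \leq x$, so the hypothesis simplifies to $y^p \leq 2Cx$, which is already the desired bound when $y \geq 1$. When $y \leq 1$ and $x \geq 1$ we trivially have $y \leq 1 \leq x$, which gives the first bound with constant $1$.

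Next I would treat the case $x \leq 1$. Now $x \leq x^p$, so $y^p \leq 2C x^p$; raising both sides to the power $1/p$ gives $y \leq (2C)^{1/p}\, x$. If $y \leq 1$ this is exactly the first conclusion. If instead $y \geq 1$, then $y^p \leq y \leq (2C)^{1/p}\, x$, which gives the second conclusion.

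Setting $C' = \max\{1,\, 2C,\, (2C)^{1/p}\}$ then handles all cases simultaneously. There is no real obstacle here — the lemma is purely an exercise in separating regimes — but the one point to be careful about is to invoke $p<1$ twice, once to move between $x$ and $x^p$ and once to move between $y$ and $y^p$, making sure the two trades go in compatible directions in each of the four subcases.
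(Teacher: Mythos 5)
Your proof is correct, and it rests on the same mechanism as the paper's: exploit that $s^p \leq s$ when $s \geq 1$ and $s \leq s^p$ when $s \leq 1$ to absorb one of $x, x^p$ into the other and pass between $y$ and $y^p$. The only cosmetic difference is organizational: you run a clean $2\times 2$ case split on whether $x$ and $y$ exceed $1$, whereas the paper handles $y \leq 1$ by splitting on $x$ but handles $y \geq 1$ without such a split, instead observing that $1 \leq y^p \leq C(x + x^p)$ forces $x$ to be bounded away from zero (hence $x^{p-1}$ bounded). Both routes yield the same explicit constant up to bookkeeping; your version is arguably a bit more systematic, but the ideas are identical.
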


\begin{proof}
Suppose first that $y \leq 1$.  If $1\leq x$,  then  the first claim holds (with $C' =1$).  On the other hand, if $x \leq 1$, then we have
\begin{align}
	\frac{y^p}{x^p} \leq C \, \left( x^{1-p} + 1 \right) \leq 2\, C \, .
\end{align}
If $1\leq y \leq C \, (x + x^p)$, then $x$ is bounded away from zero and, thus, $x^{p-1}$ is bounded.  Since
\begin{align}
	\frac{y^p}{x} \leq C \, \left( 1 + x^{p-1} \right) \, , 
\end{align}
this gives the second claim.  
\end{proof}

\begin{Lem}	\label{l:moredecay}
Given $C > 0$ and $p \in (0,1)$, there exists $C'$ so that if $0< x,y$, $1 \leq y$, and
$
	x-y \leq - C \, y^{1-p} \, , 
$
then $x^p - y^p \leq - C'$.
\end{Lem}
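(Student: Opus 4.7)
The plan is to exploit the concavity of the function $\phi(s) = s^p$ for $p \in (0,1)$, which gives a tangent line inequality at $s = y$: for all $s > 0$,
\begin{align}
\phi(s) \leq \phi(y) + \phi'(y)(s-y) = y^p + p\, y^{p-1}(s - y).
\end{align}
I would apply this with $s = x > 0$, which is allowed by the hypothesis, to obtain $x^p - y^p \leq p\,y^{p-1}(x-y)$. Because $x - y \leq -C\,y^{1-p} < 0$ and $p\,y^{p-1} > 0$, the inequality reverses when multiplying by $p\,y^{p-1}$ through $x-y$ is already negative, giving
\begin{align}
x^p - y^p \leq p\,y^{p-1} \cdot (-C\,y^{1-p}) = -pC.
\end{align}
Thus the constant $C' = pC$ works, independent of the assumption $y \geq 1$ (which nonetheless is consistent with the hypothesis $x > 0$, since it forces $y^p > C$ when $y$ is small).

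There is essentially no obstacle: the only step that might look like work is justifying the tangent line inequality, but this is just the standard concavity of $s \mapsto s^p$ for $p \in (0,1)$, combined with the sign of $x - y$. The role of $y \geq 1$ in the statement is presumably to match how the lemma is invoked in the proof of Theorem \ref{p:vardecay3} (where $\Var_u \geq 1$), not because the inequality itself requires it.
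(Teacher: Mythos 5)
Your proof is correct, and it takes a genuinely different and cleaner route than the paper's. You use the tangent-line (first-order) inequality for the concave function $s\mapsto s^p$ at $s=y$, which immediately gives $x^p - y^p \le p\,y^{p-1}(x-y) \le p\,y^{p-1}\cdot(-C\,y^{1-p}) = -pC$, so $C' = pC$ works explicitly. The paper instead introduces $G(y) = (y - C\,y^{1-p})^p - y^p$, observes $G < 0$, and argues that $G$ cannot tend to $0$ as $y\to\infty$ by showing $G'\le 0$ for large $y$ via a Taylor expansion of $(1-\eta)^{1-p}$; this yields only a non-explicit constant and implicitly relies on $y - C\,y^{1-p} > 0$ (which is forced by $x>0$ but is left unremarked). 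Your argument avoids both issues, is more elementary, and produces a sharp explicit constant. You are also right that the hypothesis $y\ge 1$ is not needed for the inequality itself --- it merely matches how the lemma is invoked in Theorem \ref{p:vardecay3}. One small wording slip: multiplying $x - y \le -C\,y^{1-p}$ by the positive quantity $p\,y^{p-1}$ \emph{preserves} the direction of the inequality (nothing reverses); your displayed computation is correct despite the surrounding sentence being garbled.
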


\begin{proof}
Since  $x \leq y - C \, y^{1-p}$, it suffices to get a negative upper bound for the function
\begin{align}
	G(y) \equiv \left( y - C \, y^{1-p} \right)^p - y^p
\end{align}
for all $y \geq 1$.  Obviously $G(y) < 0$ for all $y$ since $C > 0$, so it suffices to prove that $G$ cannot go to zero as $y \to \infty$.  We will do this by showing that $G' \leq 0$ for $y$ large enough.  We have
\begin{align}
	\frac{G'(y)}{p\, y^{p-1}}  =    \frac{ 1 - C \, (1-p) \, y^{-p} }{\left( 1 - C \, y^{-p} \right)^{1-p}}  - 1 \, .
\end{align}
 To see that $G'(y) \leq 0$ for all large $y$, we use that for all small $\eta \ne 0$ we have
\begin{align}
	(1-\eta)^{1-p} >  1 - (1-p) \, \eta \, , 
\end{align}
as can be seen by Taylor expanding $(1-\eta)^{1-p} $ about $\eta = 0$.

\end{proof}

 \section{More general models}  	\label{s:genmod}
 
\subsection{Other models}

Another model one may consider is where how much change an individual committee member is willing to make depends on his/her ranking of that candidate.  For instance, one may consider a case where an individual committee  member is much less likely to make big changes in her/his ordering if she/he ranks
 a candidate near the top of the list as opposed to near the bottom of the list.  
 In this case 
\begin{align}
\mu_{v,\tilde v}=\rho \left(\| u(v) \|_{\cB} ,\|u(\tilde v)-u(v)\|_{\cB}\right)\, ,
\end{align}
where $\rho: [0,\infty) \times [0,\infty)\to (0,1]$ and $\rho$ is  monotone nonincreasing  both variables.   That is, if $\rho=\rho (r,s)$ and $s_0$ is fixed, then $\rho(r,s_0)$ is monotone nonincreasing  in $r$; and if $r_0$ is fixed, then $\rho (r_0,s)$ is monotone nonincreasing in $s$.  This gives rise to a fully quasilinear discrete heat equation
$
\partial_t\,u=L_{\mu}\,u\, ,
$
where
\begin{align}
\mu^t_{v,\tilde v}=\rho (\|u(v,t) \|_{\cB} , \|u(v,t)-u(\tilde v,t)\|_{\cB})\, .
\end{align}
This discrete differential equation is the graph version of a fully quasilinear equation on $\RR^n\times\RR$ given by
$
\partial_t\,u=\sum_{i,j}a_{i,j}(u,\nabla u)\, u_{i,j}\, .
$

Note that in this more general case $\mu$ depends on the orientation of an edge.   This results in that the overall opinion of the committee of a candidate may not be constant in time and in general the views of the committee do  not converge to the average.    However, even in this case, we still get exponential convergence to consensus:

\begin{Thm}   \label{t:main2}
If $\partial_t\,u=L_{\mu}\,u$, $\mu^t_{v,\tilde v}=\rho ( \|u(v,t) \|_{\cB} , \|u(v,t)-u(\tilde v,t)\|_{\cB} )$, and 
\begin{align}
a= \rho (\max \| u(\cdot,0) \|_{\cB} ,\|\nabla u(\cdot,0)\|_{\infty})>0\, , 
\end{align}
then 
\begin{align}
	\| \nabla u(\cdot,t)  \|_{\infty} &\leq \left( 1 - \frac{a(d-1)}{2d} \right)^t \, 
	\| \nabla u (\cdot,0)  \|_{\infty} \, .
\end{align}
\end{Thm}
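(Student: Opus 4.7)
The strategy is to reduce the assertion to an iteration of Theorem \ref{l:iterateStepV}. That one-step estimate does not use symmetry of $\mu$, only the pointwise lower bound $\mu\geq a$. Here, however, the coefficients $\mu^t_{v,\tilde v}=\rho(\|u(v,t)\|_{\cB},\|u(v,t)-u(\tilde v,t)\|_{\cB})$ vary with the solution, so the point is to show that the lower bound $a$ is \emph{propagated} in time.

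The plan has three preliminary steps. First, I would establish that $\|\nabla u(\cdot,t)\|_{\infty}$ is nonincreasing; this is just the $a=0$ case of Theorem \ref{l:iterateStepV}, whose proof only uses that $A_u(v)$ is a convex combination of the $u(\tilde v)$'s (see \eqref{e:simplecompu}) together with Lemma \ref{l:cvxcom}, neither of which requires $\mu$ to be symmetric. Second, I would show that $\max_v\|u(v,t)\|_{\cB}$ is nonincreasing: this is immediate from the triangle inequality applied to the same convex combination, exactly as in \eqref{e:maxdown}, and again does not use symmetry. Since $\rho$ is nonincreasing in both of its arguments, combining these two monotonicities yields $\mu^t_{v,\tilde v}\geq \rho(\max\|u(\cdot,0)\|_{\cB},\|\nabla u(\cdot,0)\|_{\infty})=a$ uniformly in $t$ and in the oriented edge $(v,\tilde v)$.

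Once this is in place, Theorem \ref{l:iterateStepV} applies at every time step, so $\|\nabla u(\cdot,t+1)\|_{\infty}\leq(1-a/d)^{d-1}\,\|\nabla u(\cdot,t)\|_{\infty}$, and iterating gives $\|\nabla u(\cdot,t)\|_{\infty}\leq (1-a/d)^{t(d-1)}\,\|\nabla u(\cdot,0)\|_{\infty}$. To match the form in the statement, I would then invoke the elementary inequality $(1-a/d)^{d-1}\leq 1-\frac{a(d-1)}{2d}$, valid for $a\in[0,1]$, which follows from $(1-x)^n\leq \e^{-nx}$ combined with $\e^{-y}\leq 1-y/2$ on $[0,1]$.

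The main obstacle is the propagation step. In contrast with the symmetric case, the average $\cA_{u(\cdot,t)}$ need not be preserved (Corollary \ref{c:intupre} fails), so one cannot compare $u(v,t)$ against a fixed reference. What saves the argument is that even without symmetry the time-one map is still a convex combination of the previous values, which is enough to keep both $\max\|u(\cdot,t)\|_{\cB}$ and $\|\nabla u(\cdot,t)\|_{\infty}$ from growing; without both of these monotonicities, $\rho$ could not be held above $a$ and the per-step contraction would fail after the first iteration.
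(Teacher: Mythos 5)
Your argument is correct and matches the paper's proof: both deduce the monotonicity of $\max_v\|u(v,t)\|_{\cB}$ from \eqr{e:maxdown} and of $\|\nabla u(\cdot,t)\|_{\infty}$ from Theorem \ref{l:iterateStepV}, conclude that the lower bound $\mu^t\geq a$ propagates forward in time since $\rho$ is nonincreasing in both arguments, and then iterate the one-step contraction (equivalently, invoke Theorem \ref{t:main1}). You additionally supply the elementary comparison $(1-a/d)^{d-1}\leq 1-\tfrac{a(d-1)}{2d}$ needed to reconcile the exponential rate of Theorem \ref{t:main1} with the polynomial form stated here, a step the paper leaves implicit.
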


\begin{proof}
This follows from Theorem \ref{t:main1} since by \eqr{e:maxdown} and Theorem \ref{l:iterateStepV}
\begin{align}
\max_{v} \|u(v,t) \|_{\cB} &\leq \max_{v} \|u(v,0) \|_{\cB} \, ,\\
\|\nabla u(\cdot,t)\|_{\infty}&\leq \|\nabla u(\cdot,0)\|_{\infty}\, .
\end{align} 
\end{proof}

\subsection{Other weights}

Another model is where
$
\partial_t\,u=L_{\mu}\,u\, ,
$
and
\begin{align}
\mu_{v,\tilde v}=d\,\frac{\rho^2 (\|u(v)-u(\tilde v)\|_{\cB})}{\sum_{w\approx v}\rho (\|u(v-u(w)\|_{\cB})}\, .
\end{align}
This   arises when one argues that the new $u(v,t+1)$ should be a weighted sum where the different vertices should not any more have the same weight but it should depend on $\rho (\|u(v)-u(\tilde v)\|_{\cB})$.  Thus, the uniform weight $\frac{1}{d}$ is replaced by 
\begin{align}
	\frac{\rho (\|u(v)-u(\tilde v)\|_{\cB})}{\sum_{w\approx v}\rho (\|u(v-u(w)\|_{\cB})}
\end{align}
 which still sums to one, but is no longer uniform.  This gives an even higher weight to closer opinions.  The coefficients $\mu_{v,\tv}$ would then depend on more than just $\|u(v) - u(\tv)\|_{\cB}$, but the argument still extends to cover this case with obvious modifications.

\end{document}